\documentclass[bj,numbers,preprint,noshowframe]{imsart}
\usepackage[utf8]{inputenc}
\usepackage[T1]{fontenc}
\usepackage[english]{babel}

\usepackage{amsfonts}
\usepackage{amssymb}
\usepackage{amsthm}
\usepackage{amsmath}
\usepackage{mathtools}
\usepackage{bbm}
\usepackage{shuffle}
\allowdisplaybreaks[1]

\usepackage{graphicx}
\usepackage{float}
\usepackage{caption}
\usepackage{subcaption}
\usepackage{makecell}
\usepackage{threeparttable}
\usepackage{multirow}

\usepackage{enumitem}
\usepackage{xcolor}
\usepackage{url}
\usepackage{xspace}
\usepackage{csquotes}

\makeatletter
\providecommand{\inits@size}[1]{}
\makeatother

\makeatletter
\def\imsart@plainnames{\def\fnms@size##1{##1}\def\snm@size##1{##1}}
\AtBeginDocument{%
  \setpkgattr{title}{size}{\LARGE}%
  \setpkgattr{title}{case}{}%
  \setpkgattr{title}{style}{\centering}%
  \setpkgattr{author}{size}{\large}%
  \setpkgattr{author}{style}{\centering}%
  \setpkgattr{author}{prefix}{}%
  \setpkgattr{address}{style}{\centering}%
  \imsart@plainnames
  \setpkgattr{abstract}{size}{\normalsize}%
  \setpkgattr{abstract}{width}{.85\textwidth}%
  \setpkgattr{abstract}{indent}{0pt plus 1fil}%
  \setpkgattr{abstractname}{skip}{\hfil\newline}%
  \def\footnoterule{\kern-3pt\hrule width 0.4\columnwidth height 0.4pt\kern 2.6pt}%
}
\makeatother

\makeatletter
\@ifundefined{input@path}{\def\input@path{}}{}
\edef\input@path{{./}{../}{../../}{../../../}\input@path}
\makeatother

\providecommand{\xishape}{\itshape}

\startlocaldefs
    \usepackage{math_symbols}
    \usepackage{theorem_env}
\endlocaldefs
\makeatletter
\def\journal@name{}
\makeatother

\begin{document}

\begin{frontmatter}
\title{Fast rates for the classification of Ornstein--Uhlenbeck paths \support{This research has been partially funded by Deutsche Forschungsgemeinschaft (DFG) -- SFB1294.}}
\runtitle{Fast rates for the classification of OU paths}

\begin{aug}
\author[A]{\inits{S.}\fnms{Shanshan}~\snm{Meng}\ead[label=e1]{mengshan@hu-berlin.de}}
\author[B]{\inits{M.}\fnms{Markus}~\snm{Reiß}\ead[label=e2]{mreiss@math.hu-berlin.de}}
\address[A]{Humboldt-Universit\"at zu Berlin \printead[presep={,\ }]{e1}}
\address[B]{Humboldt-Universit\"at zu Berlin \printead[presep={,\ }]{e2}}

\end{aug}

\begin{abstract}
    In binary classification problems, plug-in classifiers can sometimes achieve faster excess risk rates than the underlying estimation rate. In this work, we study a supervised classification problem based on repeated independent observations of Ornstein--Uhlenbeck paths, with both time-homogeneous and time-inhomogeneous drifts. We show that, under a suitable separation condition on the class-dependent drifts, a plug-in classifier attains an excess risk of quadratic order in the corresponding parametric or nonparametric estimation error. The theoretical approach utilizes the Wiener chaos representation and spectral theory to characterize the log-likelihood ratio as a quadratic form of Gaussian random variables, enabling a precise analysis of margin behaviour and deviation results. This extends the fast-rate results from classification problems with linear discriminant models and Gaussian white noise models to dynamical diffusion systems with Gaussian structure.
\end{abstract}

\begin{keyword}[class=MSC2020]
\kwdgroup[type=primary]{\kwd{62H30}\kwd{62M05}}
\kwdgroup[type=secondary]{\kwd{60G15}\kwd{62G20}}
\end{keyword}

\begin{keyword}
\kwd{supervised classification}
\kwd{fast rates}
\kwd{Ornstein--Uhlenbeck process}
\kwd{margin condition}
\end{keyword}

\end{frontmatter}

\section{Introduction}\label{sec:intro}

Supervised classification is among the oldest and most thoroughly studied problems of statistical learning.
One constructs a decision rule that assigns a new observation to one of finitely many classes given $n$ labelled observations.
The quality of the classification is commonly evaluated by the excess of its misclassification probability over that of the optimal Bayes rule, see Devroye, Györfi and Lugosi~(1996)~\cite{devroye1996}, Tsybakov~(2009)~\cite{tsybakov2009}, Trabs, Jirak, Krenz and Reiß~(2021)~\cite{markus2023}.

For functional data the observation is not a vector but a trajectory.
Classification methods for such data have found applications in trajectory-based prediction; see, e.g.\, Chiou (2012) \cite{chiou2012} for an application in traffic flow forecasting.
The classification of sample paths of stochastic differential equations observed over a fixed time horizon has become an important special case within this framework; see e.g.\ Kowalek et al.~(2019)~\cite{kowalek2019} for identifying the underlying diffusion mode of particles in biology.
One research direction encodes each observed path through its rough-path signature and feeds these features into generic learning algorithms; see the primer of Chevyrev and Kormilitzin~(2026)~\cite{Chevyrev2026}.
Signature methods are often effective in practice, but statistical guarantees such as convergence rates for the resulting classifiers are not yet well understood in the absence of model-specific information.

A model-based alternative retains the model structure by building plug-in classifiers directly from estimators of the model coefficients.
For time-homogeneous SDEs with smooth, bounded space-dependent drifts, Denis et al. (2024) \cite{denis2024} proved that a nonparametric plug-in classifier attains the excess rate $n^{-s/(2s+1)}$ up to a subpolynomial factor, building on the ridge-type drift estimators of Denis, Dion-Blanc and Martinez (2021) \cite{denis2021}.
Zhao, Fan and Liu (2026) \cite{zhao2026} extend this framework to multidimensional diffusions with neural network estimators, and obtain excess risk bounds $n^{-s/(2s+d)+\varepsilon}$ for any $\varepsilon \in (0, 1/4]$ up to logarithmic and discretization terms.
In both cases the classification rate is inherited directly from the generic nonparametric estimation rate through the Lipschitz property of the excess risk, and therefore remains of the same order.
In classification theory, however, sometimes even fast excess risk rates of quadratic order in the estimation rates can be achievable when the posterior probability puts little mass near the decision threshold, see Gadat, Gerchinovitz and Marteau~(2020)~\cite{gadat2020}, who establish such fast rates in the Gaussian white noise model.
Whether the same holds for diffusion paths is the question addressed in this paper.

The prototypical model in which to pose this question is the Ornstein--Uhlenbeck (OU) process, which plays among diffusions the role that the Gaussian distribution plays among random vectors.
Here, we consider the binary classification model
\begin{equation} \label{eq:OUmodel}
  \diff X_t=Y\,b_1(t)\,X_t\diff t+(1-Y)\,b_0(t)\,X_t\diff t+\diff W_t,
  \qquad t\in[0,T],
\end{equation}
where $T>0$ is fixed, $Y\in\{0,1\}$ denotes the class label, $W$ is a standard scalar Brownian motion and $b_0,b_1$ are the deterministic time-dependent drift functions associated with the two classes.
Equivalently, conditional on $Y=i$, $i \in \{0,1\}$, the observed trajectory follows
\begin{equation}\label{eq:OUmodel-cond}
  \diff X_t=b_i(t)\,X_t\diff t+\diff W_t,
  \qquad t\in[0,T].
\end{equation}
When the drift coefficients are constant, the dynamics reduce to the classical time-homogeneous Ornstein--Uhlenbeck process.
Allowing $b_i$'s to vary with time, we obtain a time-inhomogeneous Gaussian Markov diffusion while preserving the linear structure that makes the OU process analytically tractable, which turns the classification problem into a nonparametric one.
From the statistical side, the likelihood of the OU model is explicit through Girsanov's theorem, see Liptser and Shiryaev (2001) \cite{LiptserShiryaev2001}, which has made it the standard class in parametric inference for diffusions, see Ibragimov and Has'minskii (1981) \cite{ibragimov1981} and Kutoyants (2004) \cite{kutoyants2004}; and in hypothesis testing between competing dynamics, see Gapeev and Küchler (2008) \cite{gapeev2007} and Lin'kov (1999) \cite{linkov1999}.

In the constant drift case, we show that the plug-in classifier attains the fast quadratic excess rate $n^{-1}/ \Delta$ up to logarithmic factors once the drift separation $\Delta = |b_1-b_0|$ satisfies $\Delta \gtrsim_{\log} n^{-1/2}$ (see Theorem~\ref{thm:mainconstant}).
The fast rate improves on the parametric estimation rate $n^{-1/2}$ significantly.
In the time-inhomogeneous case, we obtain the analogous bounds with the nonparametric estimation rate $n^{-s/(2s+1)}$ over Hölder balls of smoothness $s$ under Assumption~\ref{ass:drifts}.
If the drift separation $D = \Vert b_1-b_0 \Vert_{L^1}$ for $b_1 \ge b_0$ is bounded away from zero, the excess risk can achieve a fast rate of $n^{-2s/(2s+1)}/D$ up to logarithmic factors (see Theorem~\ref{thm:maininhomo}).
This is considerably faster than the nonparametric estimation rate.
The results hold for any estimator meeting the deviation bounds in Assumptions~\ref{ass:est} or~\ref{ass:estinhomo} respectively, cf. the estimators in the parametric theory \cite{ibragimov1981, kutoyants1994, kutoyants2004, liptser2001} and the nonparametric drift estimators in \cite{comte2020, comte2023, comte2024, denis2021, marie2023}.

The results of the fast rates rest on the behaviour of the model near the decision boundary.
Mammen and Tsybakov~(1999)~\cite{mammen1999} showed that the excess rate is governed by the mass that the posterior probability places near the decision boundary.
Tsybakov~(2004)~\cite{tsybakov2004} formulated this as a \textit{margin condition} on the posterior probability: if the probability of the posterior lying within $\varepsilon$ of the threshold is at most $C \varepsilon^{\alpha}$, rates faster than the parametric $n^{-1/2}$ become achievable.
Audibert and Tsybakov (2007) \cite{audibert2007} showed that simple plug-in rules attain these fast rates, and the margin condition has since become the organizing principle of classification theory, see Tsybakov (2009) \cite{tsybakov2009}.
A larger $\alpha$ corresponds to less mass near the decision boundary and faster excess rates.
The excess risk rates can be of order $v_n^{1+\alpha}$, if the estimation rate is $v_n$.
However, the condition is usually an \textit{assumption} placed on an abstract class of joint distributions.
In structured models the margin behaviour must be a consequence of the model.
The margin mass is a small-ball probability of the log-likelihood ratio near zero under the Bayes rule.
The margin condition therefore reduces to a density estimate for the log-likelihood variable near the decision boundary.
This is exactly what happens in the linear discriminant analysis (Trabs, Jirak, Krenz and Reiß~(2021)~\cite{markus2023}) and in the idealized Gaussian white-noise model (Gadat, Gerchinovitz and Marteau~(2020)~\cite{gadat2020}).
The uniform boundedness of the Gaussian density guarantees that the margin condition holds with $\alpha=1$ and yields quadratically faster classification rates under an explicit separation condition between the two classes.

In the OU model, the log-likelihood ratio is a quadratic functional of the Gaussian path and the situation becomes considerably more subtle.
The distribution is complicated with no convenient explicit density in general, see Dankel~(1991)~\cite{squareOU} and Fatalov~(2008)~\cite{fatalov2008}.
Ella-Mintsa~(2026)~\cite{mintsa2026} proposes a Malliavin calculus approach to establish the density bound, but the resulting argument is not applicable in all settings.
The integrability condition of $D Z / \Vert D Z \Vert_H^2$ for the Malliavin derivative of the log-likelihood variable $Z$ may already fail in the constant case where $b_1+b_0=0$ (see Remark~\ref{rem:malliavin}).
Moreover, the exponent $\alpha=1$ can be lost in the margin condition in the time-inhomogeneous setting (see Example~\ref{ex:countermargin}).
These observations indicate that establishing the margin condition is not merely a technical verification, but requires a more careful analysis of the distributional structure of the underlying random variable, which ultimately motivates the use of more refined tools.

The starting point of our analysis is that the log-likelihood ratio belongs to the second Wiener chaos and admits a generalized $\chi^2$-representation with the spectral weights given by the eigenvalues of a Hilbert--Schmidt operator arising from the Malliavin analysis.
Rather than bounding the density directly, we study the spectrum of the associated operator, since the density estimates for Gaussian quadratic forms can be expressed in terms of the eigenvalues, see Ulyanov~(1991)~\cite{ulyanov1991}.
We relate the spectral problem to a Sturm--Liouville type boundary value problem, which makes the eigenvalue structure more tractable.
This connection allows us to derive two-sided eigenvalue bounds.
These estimates are then translated into a density bound for the log-likelihood ratio near the decision boundary, which yields the desired margin condition.
In particular, the margin condition is satisfied with exponent $\alpha = 1$ and an effective constant proportional to $1/D$ (Propositions~\ref{prop:margin} and \ref{prop:margininhomo}).
The same representation also controls the estimation term, and the resulting framework extends beyond the OU setting to more general Gaussian processes.
Altogether, our results show that plug-in classifiers can achieve quadratic rates of convergence in the OU classification model.
Our results obtain the excess risk of order $(\log n)^{3} n^{-2s/(2s+1)}/D$. In comparison, Denis et al. (2024) \cite{denis2024} obtain the generic rate $\exp (\sqrt{c \log n})n^{-s/(2s+1)}$ for bounded space-dependent drifts.
As discussed above, this improvement crucially relies on Assumption~\ref{ass:drifts}, without which the exponent $\alpha=1$ may be lost (see Example~\ref{ex:countermargin}).
Example~\ref{ex:counterunbounded} shows that a lower bound on $D$ alone does not yield a uniform margin constant or a fast rate of Theorem~\ref{thm:maininhomo}.
Some additional spectral-ratio condition is needed.

Section~\ref{sec:setting} introduces the OU classification model, the plug-in classifier and the decomposition of the excess risk into a margin term and an estimation term.
Section~\ref{sec:constantOU} states the results in the constant-drift case and concludes with a simulation study illustrating the findings.
Section~\ref{sec:inhomo} develops the time-inhomogeneous case with an analogous simulation study.
Section~\ref{sec-proof} contains the proof strategy and the key lemmas.

\section{The Ornstein--Uhlenbeck classification problem} \label{sec:setting}
This section introduces the precise setting of the Ornstein--Uhlenbeck classification problem.

\subsection{The Ornstein--Uhlenbeck classification model}\label{ssec:model}
We consider the binary classification model with feature space $C([0,T])$ endowed with the Borel $\sigma$-algebra induced by the uniform norm and $Y \in \{0,1\}$ with an unknown joint distribution $\prob_{X, Y}$.
Suppose that the feature $X=(X_t)_{t\in [0,T]}$ is a real-valued continuous trajectory feature with $X_0=0$, which is the solution of the equation
\begin{equation}\label{eq:OUmodel1}
  \diff X_t=Y\,b_1(t)\,X_t\diff t+(1-Y)\,b_0(t)\,X_t\diff t+\diff W_t,
  \qquad t\in[0,T],
\end{equation}
where $T>0$ is fixed, $b_1,b_0 \in L^2([0,T])$, $(W_t)_{t \in [0,T]}$ is a standard scalar Brownian motion, and $Y\sim\mathrm{Ber}(1/2)$ is independent of $W$.
We observe a sample $\Samp_n:=(X^{(i)},Y^{(i)})_{1\le i\le n}$ of $n$ independent copies of $(X,Y)$.
A trajectory $X^{(i)}$ labelled with $Y^{(i)}=1$ corresponds to an observation of the drift $b_1$, while a trajectory $X^{(i)}$ labelled with $Y^{(i)}=0$ is associated with the drift function $b_0$.

Conditionally on $Y=i$, the solution has the closed form
\begin{equation}\label{eq:OUsol}
  X_t=\int_0^t \exp\!\Bigl(\textstyle\int_s^t b_i(\tau)\diff\tau\Bigr)\diff W_s,
  \quad \text{with }
  \E_i[X_t]=0,\quad
  \E_i[X_t^2]=\int_0^t \exp\!\Bigl(2\textstyle\int_s^t b_i(\tau)\diff\tau\Bigr)\diff s,
\end{equation}
where $\E_i[\,\cdot\,]:=\E[\,\cdot\mid Y=i]$ denotes the expectation conditional on the label.
For $f\in L^2([0,T])$ define
\begin{equation}\label{eq:Xnorm}
  \Vert f\Vert_{X,i}^2:=\int_0^T f(t)^2\,\E_i[X_t^2]\diff t, \quad i \in \{0,1\}.
\end{equation}
This is the natural norm in which the estimation error of $b_i$ propagates to the classification function.

\begin{exa}[Constant-drift specialization]
For $b_i(t)\equiv b_i\in\R$ this reduces to $X_t=\int_0^t e^{b_i(t-s)}\diff W_s$ and
\begin{equation}\label{eq:varconst}
  \E_i[X_t^2]=
   \begin{cases}
     \dfrac{e^{2b_i t}-1}{2b_i}, & b_i\neq 0,\\[1.2ex]
     t, & b_i=0.
   \end{cases}
\end{equation}
\end{exa}

Let $\prob$ denote the Wiener measure on $C([0,T])$. For $\theta \in L^2([0,T])$ let $\prob_\theta$ denote the law of the OU process with drift $\theta$ (either a constant or a function).
Since
$\int_0^T \theta(s)^2 x_s^2 \diff s \le \sup |x_s|^2 \Vert\theta\Vert_{L^2}^2<\infty$ for every $x \in C([0,T])$, Theorem~7.7 in Liptser and Shiryaev~(2001)~\cite{LiptserShiryaev2001} yields $\prob \sim \prob_{\theta}$ with the Girsanov density
\begin{equation}\label{eq:girsanov}
  p_\theta(X):=\frac{\diff\prob_\theta}{\diff\prob}(X)
   =\exp\!\left(\int_0^T \theta(s)\,X_s\diff X_s
        -\frac12\int_0^T \theta(s)^2 X_s^2\diff s\right).
\end{equation}
For $b_0, b_1 \in L^2([0,T])$ define the \textit{log-likelihood ratio}
\begin{equation} \label{eq:llr}
  \ell(X; b_1, b_0) := \log \frac{p_{b_1}}{p_{b_0}}(X) = \int_0^T (b_1-b_0)(s) X_s \diff X_s - \frac{1}{2} \int_0^T (b_1^2-b_0^2)(s) X_s^2 \diff s.
\end{equation}
The \textit{posterior classification probability} $\prob_{X,Y}(Y=1 \mid X)$ for any drift pair $(b_1,b_0)$ is then
\begin{equation}\label{eq:eta-model}
  \eta(X;b_1,b_0)
    :=\frac{\tfrac{1}{2} p_{b_1} (X)}{\tfrac{1}{2} p_{b_1} (X) + \tfrac{1}{2} p_{b_0} (X)}
    = \bigl(1+e^{-\ell(X;b_1,b_0)}\bigr)^{-1}.
\end{equation}
A \textit{classifier} is a measurable map $\Phi:C([0,T])\to \{0,1\}$ (possibly depending on $\Samp_n$), and its performance is measured by the misclassification error
\begin{equation}
  \risk(\Phi):=\prob_{X,Y}\bigl(Y\neq\Phi(X)\bigr).
\end{equation}
\begin{defi}
  In the OU setting, for the estimators $(\hat{b}_1,\hat{b}_0)$ of the drift pair $(b_1, b_0)$, the \textit{plug-in classifier} is defined as
\begin{equation}\label{eq:plugin}
  \begin{aligned}
    \Phi(X;\hat{b}_1,\hat{b}_0)
     &:=\indic \left( \eta(X;\hat{b}_1,\hat{b}_0)\ge 1/2 \right)
     &= \indic \left( \ell(X;\hat{b}_1,\hat{b}_0) \ge 0 \right).
  \end{aligned}
\end{equation}
The \textit{Bayes classifier} is $\Phi^*(X):=\Phi(X; b_1,b_0)$.
\end{defi}
The accuracy of any data-dependent classifier $\widehat\Phi$ is quantified by its \textit{excess risk}
\begin{equation}\label{eq:excessrisk}
  \mathcal{E}(\widehat\Phi) := \risk(\widehat\Phi)- \inf_{\Phi}\risk(\Phi)
   =\E_X\!\left[\,\vert 2\eta(X;b_1,b_0)-1\vert\,
     \indic \bigl( \widehat\Phi(X)\neq\Phi^*(X) \bigr)\right],
\end{equation}
where $\risk$ and $\E_X$ are taken conditionally on $\Samp_n$, following the formulation commonly used in the literature \cite{devroye1996, audibert2007, markus2023}. The infimum $\inf_{\Phi}\risk(\Phi)$ is attained by the Bayes classifier $\Phi^*$ defined above.

\begin{rem}
  We focus on the model in the specific situation with equal class probabilities.
  However, for some $\pi \in (0,1)$ and $Y \sim$Ber($\pi$), the formulation of the posterior classification probability is analogous.
  The posterior, the decomposition and the estimation term carry over with the shift $\log \pi/(1-\pi)$. The margin proposition concern the small-ball probability of $\ell$ at $-\log \pi/(1-\pi)$ and the exponent $\alpha=1$ can fail there.
\end{rem}
From now on, $\prob_{b_i}$ denotes the conditional distribution given $Y=i$, and $\E_i$ the conditional expectation. When $\Samp_n$ is additionally included in the subscript, it denotes the corresponding (joint) distribution with $\Samp_n$. Moreover, $\prob_X$ denotes the marginal distribution of the input $X$.

\subsection{Excess risk decomposition and the margin condition}\label{ssec:decomp}
Write $\widehat{\Phi}(X)=\Phi(X; \hat{b}_1, \hat{b}_0)$ and $\widehat{\eta}(X)=\eta(X; \hat{b}_1, \hat{b}_0)$ for estimators $\hat{b}_1$ and $\hat{b}_0$. The derivation of the upper bound proceeds via the classical decomposition of the excess risk at level $\varepsilon > 0$,
\begin{align}
  \E_{\Samp_n}[\mathcal{E}(\widehat\Phi)]
  &=\E_{\Samp_n, X}\!\left[\vert 2\eta(X)-1\vert\,
       \indic \bigl( \widehat\Phi\neq\Phi^* \bigr)
       \Bigl(\indic \bigl( |\eta(X)-\frac12|\le\varepsilon \bigr)
            +\indic \bigl( |\eta(X)-\frac12|>\varepsilon \bigr)\Bigr)\right]
       \notag\\
  &\le 2\varepsilon\,
        \underbrace{\prob_X\bigl(\vert 2\eta(X)-1\vert\le 2\varepsilon\bigr)}
        _{=:(\mathrm{MA})}
     +\underbrace{\prob_{\Samp_n,X}\bigl(\vert\widehat{\eta}(X)-\eta(X)\vert>\varepsilon\bigr)}
        _{=:(\mathrm E)} .
  \label{eq:masterdecomp}
\end{align}
The margin term (MA) involves only the test path $X$, while the estimation term (E) involves both $\Samp_n$ and the independent test pair $(X,Y)$.
The first inequality for the term (MA) uses the bound $|2\eta-1|\le 2\varepsilon$ on $\{|\eta-\frac12|\le\varepsilon \}$.
It is controlled by the margin assumption and reflects the behaviour near the decision boundary.
The second term (E) uses the elementary inclusion $\{ \widehat\Phi\neq\Phi^* \} \cap \{|\eta-\frac12|>\varepsilon\}
 \subset \{|\widehat{\eta}-\eta|>\varepsilon\}$ together with
$|2\eta-1|\le 1$
and represents the regression estimation error of $\widehat{\eta}$.

As shown in earlier works on supervised classification (see e.g.\ \cite{mammen1999, audibert2007, gadat2020}), the margin probability (MA)
is a key component in the convergence rate of the excess risk. Mammen and Tsybakov \cite{mammen1999} show that, under suitable margin and density assumptions, the discriminant problem can attain fast rates of convergence, better than $1 / \sqrt{n}$. Audibert and Tsybakov \cite{audibert2007} extend the result to more general classification problems. The \textit{margin assumption} is stated as follows:
\begin{assu}\label{ass:margin}
  There exist constants $C_{\mathrm{M}} >0$ and $\alpha\ge 0$ such that
\begin{equation}\label{eq:margin}
    \prob_X\!\left(\Bigl|\eta(X)-\frac12\Bigr|\le\varepsilon\right)
    \le C_{\mathrm{M}}\,\varepsilon^{\alpha},
    \qquad \varepsilon>0 .
\end{equation}
\end{assu}
Here $\alpha$ and $C_{\mathrm{M}}$ may depend on the model, i.e.\ in our specific setting \eqref{eq:OUmodel} on the entries $b_1$, $b_0$ and $T$. However, they do not depend on $\varepsilon$ or the sample size $n$.
The exponent $\alpha$ measures how much mass concentrates near the decision boundary $\eta=1/2$.
$\alpha=0$ is the trivial bound, while larger $\alpha$ yields faster rates.
The case $\alpha=1$ is here the one of main interest, as the excess risk decays as the square of the estimation rate for the drifts up to logarithmic factors.
In the OU model the margin condition turns out to be governed by the drift separation for small $\varepsilon$; see Proposition~\ref{prop:margin} and Proposition~\ref{prop:margininhomo}.
The propositions also compute $C_{\mathrm{M}}$ explicitly as a function of the model and isolate its dependence on the drift separation.

Using the local Lipschitz property of the exponential function (Section~\ref{ssec:reduction}), both terms in \eqref{eq:masterdecomp} reduce to shifted small-ball and tail probabilities of a single quadratic functional of the OU path.
For deterministic coefficients $u \in L^2([0,T])$ and $v \in L^1([0,T])$,
define the quadratic functional
\begin{equation}\label{eq:Z}
  Z=Z(u,v)
   :=\int_0^T u(t)\,X_t\diff W_t + \frac12\int_0^T v(t)\,X_t^2\diff t ,
\end{equation}
where $X$ is the OU process \eqref{eq:OUsol} of drift $b_i$ driven by $W$ conditionally on $Y=i$.
In the margin probability, the log-likelihood ratio $\ell$ can be expressed by deterministic $u = b_1 - b_0$ and $v = \pm (b_1 - b_0)^2$ given $Y$.
In the estimation error they are determined by $b_1-b_0$ and $\hat b_i - b_i$ given $Y$ and $\Samp_n$.
The whole analysis rests on:
(i) a spectral representation of $Z$ as a shifted generalized $\chi^2$ variable (Section~\ref{ssec:chaos});
(ii) eigenvalue analysis for the associated operator (Section~\ref{ssec:factorization} -- \ref{ssec:cJJ-evs});
and (iii) density and tail bounds for generalized $\chi^2$ variables via the spectrum analysis (Section~\ref{ssec:density} -- \ref{ssec:estimation}).

\par\smallskip\noindent\textbf{Notations.}\enspace
Throughout the paper we write
\begin{equation}\label{eq:notation}
  a:=b_1-b_0,\qquad
  D:=\int_0^T a(t)\diff t,\qquad
  \omega_i(t):=\exp\!\Bigl(-\!\int_0^t b_i(\tau)\diff\tau\Bigr),
\end{equation}
for the drift difference, the drift separation, and the inverse state-transition function of class $i$.
Since $b_i\in L^2([0,T])$,
$\omega_i$ is continuous and strictly positive, and we set
\begin{equation}\label{eq:comega}
  c_\omega:=\min_{i\in\{0,1\}}\;\min_{t\in[0,T]}\omega_i(t)>0,\qquad
  C_\omega:=\max_{i\in\{0,1\}}\;\max_{t\in[0,T]}\omega_i(t)<\infty .
\end{equation}
In this notation $\E_i[X_t^2]=\omega_i(t)^{-2}\int_0^t\omega_i(s)^2\diff s$.
Write $C_{\mathrm{est},i} := \int_0^T \E_i[X_t^2]\diff t$ for the total variance of the OU process under class $i$.
When $a$ is absolutely continuous, we further define
\begin{equation}\label{eq:psi}
  \psi:=a'+a\,(b_0+b_1)=(b_1-b_0)'+(b_1^2-b_0^2).%
\end{equation}
The function $\psi$ governs the sign structure of the spectrum of the second-chaos operator associated with the log-likelihood ratio; see Section~\ref{ssec:factorization}.
\section{Results in the constant-drift case} \label{sec:constantOU}

Throughout this section $b_1,b_0\in\R$ are constants with $b_1 > b_0$, $\Delta: = b_1-b_0$,
so that $a = \Delta > 0$ and $D = T \Delta$ for fixed $T$.
The function $\psi = b_1^2 - b_0^2$ is constant.
We concentrate our attention on the drift pairs in the set
\begin{equation} \label{eq:classb}
	\mathcal{B} (L, \Delta_0) : = \left\{ (b_1,b_0) \in [-L,L]^2 \; : \; b_1 - b_0 \ge \Delta_0 \right\}, \quad L > 0 , \; \Delta_0 \in (0, 2L].
\end{equation}

In the constant-drift OU case, we have the following margin probability result for general $L > 0$, $\Delta > 0$.
\begin{prop} \label{prop:margin}
Let $(X,Y)$ follow \eqref{eq:OUmodel} with constant drifts $b_1 \neq b_0$, $\Delta:=\vert b_1- b_0 \vert > 0$ and $\vert b_0 \vert  \le L$, $\vert b_1 \vert  \le L$.
There exists a constant $C_{\mathrm{margin}}(LT)> 0$ such that
\begin{equation}\label{eq:marginbound}
  \prob_X\!\left(\Bigl|\eta(X)-\tfrac12\Bigr|\le\varepsilon\right)
    \le \frac{C_{\mathrm{margin}} (LT)\,\varepsilon}{T \Delta}, \quad \text{for all }\varepsilon \le \min \left\{ \frac{T \Delta}{64}, \frac{1}{16}\right\}.
\end{equation}
Note that $C_{\mathrm{margin}} (LT)$ is independent of $\Delta$ and $\varepsilon$, depending only on the product $LT$.
\end{prop}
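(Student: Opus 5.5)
We reduce the statement to a density bound for the log-likelihood ratio near zero. Since $\eta=(1+e^{-\ell})^{-1}$, on $\{|\eta-\tfrac12|\le\varepsilon\}$ with $\varepsilon\le 1/16$ we have $|\ell|\le \log\frac{1+2\varepsilon}{1-2\varepsilon}\le 8\varepsilon$. It therefore suffices to show
\[
\prob_{b_i}\bigl(|\ell(X;b_1,b_0)|\le \delta\bigr)\le \frac{C\,\delta}{T\Delta},\qquad i\in\{0,1\},\ \delta\le T\Delta/8,
\]
and then average over $Y\sim\mathrm{Ber}(1/2)$. Under $\prob_{b_i}$ we substitute $\diff X_t=b_iX_t\diff t+\diff W_t$ into \eqref{eq:llr}. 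This gives $\ell=Z(u,v)$ in the notation \eqref{eq:Z} with $u=\Delta$ and $v=2\Delta b_i-(b_1^2-b_0^2)=\pm\Delta^2$, where the sign is $-$ for $i=0$ and $+$ for $i=1$. By Itô, $\int_0^T X_t\diff W_t=\tfrac12(X_T^2-T)-b_i\int_0^T X_t^2\diff t$, so $\ell$ is an explicit quadratic functional of the Gaussian path. We next rescale time by $t=Ts$. Then $\ell$ depends only on $(b_0T,b_1T)$, and $T\Delta$ plays the role of the separation. This rescaling is why the constant can be taken to depend only on $LT$.

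The core step is the second-chaos representation from Section~\ref{ssec:chaos}. Centering, we write
\[
\ell-\E_i\ell=\sum_k \lambda_k(\xi_k^2-1),\qquad \xi_k\ \text{i.i.d. } N(0,1),
\]
where the $\lambda_k$ are the eigenvalues of the symmetric Hilbert--Schmidt kernel on $L^2([0,T])$ obtained from the Wiener integrands. Following Section~\ref{ssec:factorization}, we reduce the eigenproblem to a Sturm--Liouville boundary value problem with constant coefficients (because $\psi$ is constant). In the constant case this equation is solved by trigonometric functions, so the eigenvalues satisfy a transcendental equation of the form $\tan(\mu T)=\dots$.

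From this equation we aim to derive two-sided bounds $|\lambda_k|\asymp_{LT} \Delta T^2/(1+k^2)$, possibly with the first few eigenvalues treated separately. In particular we need at least two, and ideally three, eigenvalues of size $\gtrsim c(LT)\,\Delta T$ in modulus, and we need $\sum_k\lambda_k^2\lesssim C(LT)(\Delta T)^2$.

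We then turn these eigenvalue bounds into a density bound. The density of $\sum\lambda_k\xi_k^2$ is the convolution of the density of $\lambda_1\xi_1^2+\lambda_2\xi_2^2$ with an independent remainder. We use the following facts, in the spirit of Ulyanov~\cite{ulyanov1991}:
\begin{itemize}
\item If $\lambda_1,\lambda_2$ have the same sign, the density of $\lambda_1\xi_1^2+\lambda_2\xi_2^2$ is bounded by $(2\sqrt{|\lambda_1\lambda_2|})^{-1}$.
\item If the signs are mixed, the density has a logarithmic singularity at zero. In that case a third eigenvalue of comparable size, or an integrated characteristic-function bound $\int\prod_k|1-2i\lambda_k s|^{-1/2}\diff s\lesssim 1/\min(|\lambda_1|,|\lambda_3|)$, restores boundedness.
\end{itemize}
Convolution preserves the sup-norm bound, so the density of $\ell$ is at most $C(LT)/(T\Delta)$. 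Integrating over an interval of length $2\delta$ gives the claim.

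The main obstacle is the lower bound on the leading eigenvalues uniformly over $|b_i|\le L$. This includes the degenerate regime $b_1+b_0=0$ (where $\psi=0$ and the Malliavin approach fails, cf.\ Remark~\ref{rem:malliavin}) and the regime of very small $\Delta$. To handle it, we first factor $\Delta$ out of the kernel. At leading order the remaining operator is the one from $\int X\diff W$ plus an $O(\Delta T)$ perturbation, so its spectrum is a fixed, nondegenerate one depending only on $b_iT$. By continuity and compactness over $[-LT,LT]^2$, the first three normalized eigenvalues are then bounded away from zero. The restriction $\varepsilon\le T\Delta/64$ is used only to keep the constant uniform: it makes $\delta$ small relative to the leading eigenvalue scale.
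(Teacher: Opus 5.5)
There is a genuine gap in your uniformity step. You claim that, after factoring out $\Delta$, the first three normalized eigenvalues are bounded away from zero by continuity and compactness over $[-LT,LT]^2$. This is false. The normalized spectrum degenerates on the line $b_0+b_1=0$, which lies inside the admissible class (e.g.\ $b_1=-b_0=\Delta/2$). There the Volterra part vanishes, the operator is rank one, and $\ell=\frac{\Delta}{2}(X_T^2-T)$ (Remark~\ref{rem:malliavin}). Your ``leading-order'' operator coming from $\int X\,dW$ is itself rank one at $b_i=0$.

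Quantitatively, Proposition~\ref{prop:const-spectrum} gives $|\lambda_j|\asymp \Delta\,|b_0+b_1|\,T^2 j^{-2}$ for $j\ge 1$. So every bound of the type $1/\sqrt{|\lambda_j\lambda_k|}$ blows up as $|b_0+b_1|T\to 0$, and this is not merely a lossy estimate. For small $|b_0+b_1|T$ the density of $\ell$ really has a spike of height $\gg 1/(T\Delta)$ near $\ell=-T\Delta/2$, and it is unbounded when $b_0+b_1=0$. Hence no global bound $\|f_\ell\|_\infty\le C(LT)/(T\Delta)$ exists. Your convolution/sup-norm route can only give a constant that also depends on a lower bound for $|b_0+b_1|T$, which is not the statement.

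The missing idea is to exploit where the small-ball window sits rather than bounding the density globally. Do not center. The trace identity of Lemma~\ref{lem:shift} (the It\^o correction) gives $2\ell=\bar Z-T\Delta$ with $\bar Z=\sum_j\lambda_jN_j^2$ uncentred. Therefore $\{|\ell|\le\delta\}=\{|\bar Z-T\Delta|\le 2\delta\}$, and the restriction $\varepsilon\le T\Delta/64$ is exactly what places this window inside $[T\Delta/2,\infty)$. That restriction is the crux of the proof, not a convenience for uniformity.

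On $(0,\infty)$ one then uses the scale-free bound $f_{\bar Z}(x)\le C/x$ of Theorem~\ref{thm:density}, which rests on three facts:
\begin{enumerate}
\item A single weighted $\chi^2_1$ has density at most $\beta_1/x$ at $x$, whatever its weight, so the leading eigenvalue never needs a lower bound.
\item The negative part is discarded through $f_{P-S_-}(x)\le\sup_{w\ge x}f_P(w)$. The regime $b_0+b_1\ge 0$, with its single positive eigenvalue, therefore costs nothing.
\item In the regime $b_0+b_1<0$, the remaining positive eigenvalues enter only through the ratios $\theta_1/\theta_j$ and $\theta_k/\theta_1$ in the Ulyanov factor. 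These ratios are controlled by $\rho=e^{4|b_i|T}\le e^{4LT}$, independently of the size of $b_0+b_1$.
\end{enumerate}
This is why the constant depends on $LT$ alone and survives the degenerate line $b_0+b_1=0$.
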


\begin{cor}
	The model \eqref{eq:OUmodel} with constant drifts satisfies Assumption~\ref{ass:margin} with
	\begin{equation} \label{eq:assmarginconst}
		\alpha = 1, \quad C_{\mathrm{M}} = \max \left\{\frac{C_{\mathrm{margin}}(LT)}{T \Delta}, \; 16 \right\}.
	\end{equation}
	For $\varepsilon \le 1/16$ this is \eqref{eq:marginbound} (extended to all $\varepsilon\le 1/16$ using $C_{\mathrm{margin}} \ge 64$).
	For $\varepsilon > 1/16$ the trivial bound $\prob_X( \cdot) \le 1 \le 16\varepsilon$ applies.
\end{cor}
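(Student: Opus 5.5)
The corollary follows from Proposition~\ref{prop:margin} by splitting the range of $\varepsilon$ into three regions. In each region we bound the probability by $C_{\mathrm M}\varepsilon$, with $C_{\mathrm M}$ as in \eqref{eq:assmarginconst}. The only ingredient beyond the proposition is that the proposition's constant can be taken large: we may enlarge $C_{\mathrm{margin}}(LT)$ to $\max\{C_{\mathrm{margin}}(LT),64\}$ without affecting \eqref{eq:marginbound}, so we assume $C_{\mathrm{margin}}\ge 64$ throughout.

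\emph{Small $\varepsilon$.} For $\varepsilon\le\min\{T\Delta/64,1/16\}$, \eqref{eq:marginbound} gives the bound $C_{\mathrm{margin}}\varepsilon/(T\Delta)\le C_{\mathrm M}\varepsilon$ directly.

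\emph{Intermediate $\varepsilon$.} This region, $T\Delta/64<\varepsilon\le 1/16$, is nonempty only when $T\Delta<4$. The proposition does not cover it, so we use the trivial bound on a probability. Since $\varepsilon>T\Delta/64$,
\begin{equation*}
\prob_X\!\left(\Bigl|\eta(X)-\tfrac12\Bigr|\le\varepsilon\right)\le 1<\frac{64\,\varepsilon}{T\Delta}\le\frac{C_{\mathrm{margin}}\,\varepsilon}{T\Delta}\le C_{\mathrm M}\varepsilon,
\end{equation*}
where the third inequality uses $C_{\mathrm{margin}}\ge 64$. Combining this with the first region gives the bound for all $\varepsilon\le 1/16$.

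\emph{Large $\varepsilon$.} For $\varepsilon>1/16$ we have $1<16\varepsilon\le C_{\mathrm M}\varepsilon$, since $C_{\mathrm M}\ge 16$. Together the three regions give \eqref{eq:margin} with $\alpha=1$ for all $\varepsilon>0$.

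No step is a real obstacle. The one point needing care is the intermediate region, where the proposition gives nothing. It is handled by choosing the lower bound $64$ on $C_{\mathrm{margin}}$ to match the threshold $T\Delta/64$, which is harmless because the proposition only asserts that some constant depending on $LT$ exists.
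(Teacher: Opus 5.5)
Your proposal is correct and follows the paper's argument: you apply the proposition for $\varepsilon\le\min\{T\Delta/64,1/16\}$, use $1<64\varepsilon/(T\Delta)\le C_{\mathrm{margin}}\varepsilon/(T\Delta)$ on the gap $T\Delta/64<\varepsilon\le 1/16$ (the same step that closes the proof of Proposition~\ref{prop:margininhomo}), and use $1\le 16\varepsilon$ for $\varepsilon>1/16$. The only cosmetic difference is how $C_{\mathrm{margin}}\ge 64$ is obtained: you enlarge the constant without loss of generality, whereas in the paper it holds by construction, since $C_{\mathrm{margin}}=64c_d$ with $c_d\ge 1$.
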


\begin{rem} \label{rem:CMconst}
	For a fixed pair $(b_1, b_0)$ with $\Delta >0$, the quantity $C_{\mathrm{M}}$ in \eqref{eq:assmarginconst} is a constant in the sense of Assumption~\ref{ass:margin}. The dependence on the model is fully explicit: $C_{\mathrm{margin}}$ does not depend on $\Delta$, while $C_{\mathrm{margin}}(LT) / T \Delta $ blows up as $\Delta \to 0$.
	This is not an artefact of the proof: no exponent $\alpha>0$ holds with a uniform constant over any class of models in which $\Delta$ accumulates at $0$.
	Uniformly over $\mathcal{B}(L, \Delta_0)$ the margin assumption holds with $\alpha = 1$ and $C_{\mathrm{M}} = \max \left\{ C_{\mathrm{margin}} (LT) /(T \Delta_0), 16\right\}$, which depends only on $LT$ and $\Delta_0$. It is finite for all $L > 0$ and $\Delta_0 >0$ by the constant-case proof; see Section~\ref{sec-proof}.
\end{rem}

\begin{rem} \label{rem:marginLT}
	The margin bound of Proposition~\ref{prop:margin} depends on $T$ in two opposite ways: through the factor $1/(T\Delta )$ and through the constant $C_{\mathrm{margin}}$, which grows super-exponentially in $T$, which is an artefact of the bound.
	This growth does not imply that the margin result gets worse with longer observation windows.
	The $T$-dependence of $C_{\mathrm{margin}}(LT)$ enters through the uniform bound $C_\omega/c_{\omega}$ of the weight functions to control the eigenvalue ratios and is not expected to reflect the actual growth of these ratios sharply in $T$.
	We keep the statement, since the dependence on $T$ does not affect the rate in $n$ in the excess risk.
\end{rem}

Recall the definition of $\Vert \cdot \Vert_{X,i}$ from \eqref{eq:Xnorm} and the definition
\begin{equation}\label{eq:Cest}
	C_{\mathrm{est},i} := \int_0^T \E_i [X_t^2] \diff t = \Vert \mathbf{1} \Vert_{X,i}^2
	=\begin{cases}
		\dfrac{1}{4 b_i^2} \left(e^{2b_iT} -1 - 2 b_i T\right), & b_i \neq 0,\\[1.2ex]
		\dfrac{T^2}{2}, & b_i = 0.
	\end{cases}
\end{equation}
We have the following assumption on the estimators.
\begin{assu}\label{ass:est}
Let $N_i$ be the number of sample paths with label $i$. There exist estimators
$\hat b_i$, built from the $N_i$ paths of class $i$, and constants $K_0, C_0>0$ such that
\begin{equation}\label{eq:estcond}
	\prob_{\Samp_n} \left( \sqrt{C_{\mathrm{est},j}}\left\vert \hat{b}_i-b_i\right\vert  \ge K_0 \sqrt{\frac{\log N_i}{N_i}}  \; \Big\vert \; N_i=n_i \right) \le \frac{C_0}{n_i}, \quad i = 0, 1, \quad n_i \ge 2.
\end{equation}
\end{assu}

\begin{rem}
	The canonical choice is the maximum likelihood estimator of the class-$i$ drift (see \cite{liptser2001, kutoyants2004}). Applying the concentration inequality of second-chaos variables gives a tail bound in the form of \eqref{eq:estcond}. Classical exponential bounds for regular i.i.d. experiments also give the same conclusion \cite{ibragimov1981}. Finite-sample large-deviation bounds for (quasi-)maximum likelihood estimators are given in \cite{spokoiny2012}.
\end{rem}

The estimation error is bounded in the following lemma, which is the constant-drift specialization of Lemma~\ref{lem:est-general}.

\begin{lem} \label{lem-est}
	Let $\hat{b}_i$ be estimators under Assumption~\ref{ass:est} and define
	\begin{equation*}
		\delta_{i} := 48 \left(2\sqrt{2} K_0 + 8\,K_0^2 + 8 K_0 \, \sqrt{C_{\mathrm{est},i}} \, \Delta \right).
	\end{equation*}
	Let
	\begin{equation} \label{eq:choiceeps}
	\varepsilon_i := \delta_{i} \bigl(\log n\bigr)^{\frac{3}{2}} n^{-\frac{1}{2}}.
	\end{equation}
	For $n\ge 27$ large enough such that $\varepsilon_i \le 3/4$, the plug-in posterior probability has the conditional estimation error
	\begin{align*}
	\prob_{\Samp_n,b_i} \left(\vert \widehat{\eta}(X)-\eta(X) \vert > \varepsilon_i  \right)
	\le \frac{9+8C_0}{n}.
\end{align*}
\end{lem}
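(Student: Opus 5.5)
The plan is to reduce the estimation error of $\widehat\eta$ to deviations of the log-likelihood ratio, and then to split those deviations into one part coming from the estimators and one part coming from the test path. Since $x\mapsto(1+e^{-x})^{-1}$ is $1/4$-Lipschitz, we get $|\widehat\eta(X)-\eta(X)|\le \tfrac14|\ell(X;\hat b_1,\hat b_0)-\ell(X;b_1,b_0)|$. So it suffices to bound $\prob(|\hat\ell-\ell|>4\varepsilon_i)$; with the margin we gain, even a crude Lipschitz step is enough.

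Under $Y=i$ we substitute $\diff X_t=b_iX_t\diff t+\diff W_t$ into \eqref{eq:llr}. For constant drifts this gives
\begin{equation*}
\hat\ell-\ell=(\hat a-a)\int_0^T X_t\diff W_t+\Bigl[(\hat a-a)b_i-\tfrac12\bigl((\hat b_1^2-\hat b_0^2)-(b_1^2-b_0^2)\bigr)\Bigr]\int_0^T X_t^2\diff t ,
\end{equation*}
where $\hat a=\hat b_1-\hat b_0$. The bracket equals $-\tfrac12\sum_j \pm (\hat b_j-b_j)(\hat b_j+b_j-2b_i)$. Writing $\hat b_j+b_j-2b_i=(\hat b_j-b_j)+2(b_j-b_i)$, each coefficient is a combination of linear terms $|\hat b_j-b_j|$, quadratic terms $|\hat b_j-b_j|^2$, and cross terms $|\hat b_j-b_j|\Delta$. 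This is exactly the structure of $\delta_i$: $K_0$, $K_0^2$ and $K_0\sqrt{C_{\mathrm{est},i}}\Delta$.

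Next we work on the event $G$ on which both estimators satisfy $|\hat b_j-b_j|\sqrt{C_{\mathrm{est},i}}\le K_0\sqrt{\log N_j/N_j}$. The estimators are independent of the test path, so we may condition on $\Samp_n$.
\begin{itemize}[nosep]
\item To control $P(G^c)$, we first show $N_j\ge n/4$ with probability at least $1-2e^{-n/8}\ge 1-1/n$ for $n\ge27$, by Hoeffding. On that event Assumption~\ref{ass:est} gives $P(G^c)\le 2\cdot 4C_0/n\le 8C_0/n$. Here $\log N_j/N_j\le 4\log n/n$ gives the $\sqrt{2}$-type factors.
\item For the test path we need tail bounds on the two fixed random variables $\int_0^TX_t\diff W_t$ and $\int_0^T X_t^2\diff t$ under $\prob_{b_i}$. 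Both lie in chaos $\le2$, with second moments of order $C_{\mathrm{est},i}$ and means $0$ and $C_{\mathrm{est},i}$ respectively. Hypercontractivity, or the generalized $\chi^2$ representation of Section~\ref{ssec:chaos} with Hanson--Wright, gives deviation $\lesssim \sqrt{C_{\mathrm{est},i}}\log n$ for the first and $\lesssim C_{\mathrm{est},i}\log n$ for the second, each with probability at most $c/n$.
\end{itemize}

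Combining these, on $G$ intersected with the good path event we get
\begin{equation*}
|\hat\ell-\ell|\lesssim \Bigl(K_0+K_0^2\sqrt{\tfrac{\log n}{n}}+K_0\sqrt{C_{\mathrm{est},i}}\,\Delta\Bigr)\sqrt{\tfrac{\log n}{n}}\,\log n\le 4\varepsilon_i .
\end{equation*}
This uses $\sqrt{C_{\mathrm{est},i}}|\hat b_j-b_j|\cdot\sqrt{C_{\mathrm{est},i}}|\hat b_j-b_j|\lesssim K_0^2\log n/n$. In the quadratic term, one factor $C_{\mathrm{est},i}$ from $\int X^2$ is absorbed by the two estimator factors, and the cross term keeps $\Delta\sqrt{C_{\mathrm{est},i}}$. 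The condition $\varepsilon_i\le 3/4$ only ensures the logarithmic bookkeeping and the Lipschitz reduction are consistent. Summing the failure probabilities, $1/n$ for the label counts, $8/n$ for the path tails and $8C_0/n$ for the estimators, yields $(9+8C_0)/n$.

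The main obstacle is the second-chaos tail for $\int_0^T X_t\diff W_t$ and $\int_0^TX_t^2\diff t$ with constants scaled exactly by $C_{\mathrm{est},i}$, uniformly in $b_i$. I would use the eigenvalue representation $\sum_k\lambda_k(\xi_k^2-1)$ with $\sum\lambda_k^2\le C_{\mathrm{est},i}^2$ and $\max\lambda_k\le\sum_k\lambda_k$, then apply the Laurent--Massart bound at level $\log n$. The numerical constant $48$ I expect to come from matching these explicit constants, which is routine but fiddly.
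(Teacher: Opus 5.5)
Your proof is correct, but it takes a different route from the paper's, and in the constant case a simpler one.

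\emph{The paper's route.} The paper obtains Lemma~\ref{lem-est} as a specialization of Lemma~\ref{lem:est-general}. It first uses the likelihood-ratio inequality $|\widehat\eta-\eta|\le 2|p_{\hat b_1}/p_{b_1}-1|+|p_{\hat b_0}/p_{b_0}-1|$ together with Lemma~\ref{lem:exp}; this is where $\varepsilon_i\le 3/4$ is needed. That reduces the problem to the two log-ratios $\Lambda_i=\log(p_{\hat b_i}/p_{b_i})$. Then, conditionally on $\Samp_n$, it applies the Bernstein-type Lemma~\ref{lem:LM} separately to the stochastic and quadratic parts of each $\Lambda_i$. All chaos norms are controlled by $\Vert\hat b_i-b_i\Vert_{X,j}$ through Lemma~\ref{lem:chaosnorms}. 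The result is seven exponential terms, each at most $1/n$ under \eqref{eq:est-threshold}.

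\emph{Your route.} You use that the logistic map is $\tfrac14$-Lipschitz, and that with constant drifts the estimation errors are scalars. Hence $\hat\ell-\ell=\alpha U+\beta V$, where $(\alpha,\beta)$ is $\Samp_n$-measurable and $U=\int_0^TX_t\,\diff W_t$, $V=\int_0^TX_t^2\,\diff t$ are fixed functionals of the test path. Two concentration bounds on these fixed variables then suffice. For instance, Lemmas~\ref{lem:LM} and~\ref{lem:chaosnorms} give $|U|\le 4\sqrt{C_{\mathrm{est},i}}\log n$ and $V\le 7C_{\mathrm{est},i}\log n$ outside events of probability $2/n$ and $1/n$. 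After that, the bound on $|\hat\ell-\ell|$ is deterministic on $G$. Consequences:
\begin{itemize}
\item the hypothesis $\varepsilon_i\le 3/4$ is not needed;
\item the constant $48$ has ample slack.
\end{itemize}

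\emph{What the paper's route buys.} Generality. For function-valued errors $\hat b_i-b_i$ the coefficients cannot be pulled out of the integrals, so your factorization does not yield Lemma~\ref{lem-estinhomo}. Lemma~\ref{lem:est-general} covers both settings at once. Your Lipschitz reduction would still carry over to the inhomogeneous case.

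\emph{Two small bookkeeping fixes.}
\begin{itemize}
\item The two-sided Hoeffding bound $2e^{-n/8}$ is not below $1/n$ for $27\le n\le 33$. Budget $2/n$ for the label counts, as the paper does. The total is then $(5+8C_0)/n$, still below $(9+8C_0)/n$.
\item The weights in the $\chi^2$ expansion of $U$ can have both signs (e.g.\ for $b_i>0$). So use the Bernstein-type Lemma~\ref{lem:LM}, or Hanson--Wright, rather than the nonnegative-weight Laurent--Massart inequality.
\end{itemize}
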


The main result on the upper bound is stated in the following theorem.
\begin{thm}[Excess-risk upper bound, constant drift]\label{thm:mainconstant}
Let $L > 0$, $T>0$ and $\Delta_0 \in (0,2L]$.
Let $(\hat{b}_1, \hat{b}_0)$ be the estimators in Assumption~\ref{ass:est} with constants $K_0, C_0$ uniformly over $\mathcal{B}(L, \Delta_0)$.
We denote by
\begin{equation} \label{eq:paraestrate}
v_n : = (\log n / n )^{1/2}
\end{equation}
the standard convergence rate for parametric estimation error in Assumption~\ref{ass:est}.
There exists a constant $N_{L,T,K_0}$ depending only on $L$, $T$ and $K_0$ such that for any $n \ge N_{L,T, K_0}$, the plug-in classifier
$\widehat\Phi=\Phi(X;\hat b_1,\hat b_0)$ satisfies
\begin{equation}\label{eq:mainrate}
  \sup_{(b_1, b_0) \in \mathcal{B}(L, \Delta_0)} \E_{\Samp_n} [ \mathcal{E} (\widehat\Phi)] \;\lesssim\;
  v_n \, (\log n) \wedge \frac{v_n^2 (\log n)^2 }{\Delta_0} ,
\end{equation}
where the implied constants in the inequalities depend only on $K_0, C_0$ in Assumption~\ref{ass:est}, $L$ and $T$.
\end{thm}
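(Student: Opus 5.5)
The proof combines the decomposition \eqref{eq:masterdecomp} with Proposition~\ref{prop:margin} and Lemma~\ref{lem-est}. Everything is conditioned on the class counts $N_0,N_1$. Since $Y\sim\mathrm{Ber}(1/2)$, Hoeffding's inequality gives $\prob(\min_i N_i < n/4)\le 2e^{-n/8}$, which is $O(1/n)$. On the complementary event both $N_i\ge n/4\ge 2$ for $n$ large. Assumption~\ref{ass:est} then yields deviation bounds at rate $\sqrt{\log N_i/N_i}\lesssim v_n$ with failure probability at most $4C_0/n$. This is the input used in Lemma~\ref{lem-est}; its statement is phrased in $n$, so I take it as already accounting for this step. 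All constants in Lemma~\ref{lem-est} depend on $(b_1,b_0)$ only through $\Delta\le 2L$ and $C_{\mathrm{est},i}$. By \eqref{eq:Cest}, $C_{\mathrm{est},i}$ is bounded above uniformly over $|b_i|\le L$ by a function of $L,T$. Hence $\delta:=\max_i\delta_i\le \bar\delta(L,T,K_0)$ uniformly over $\mathcal{B}(L,\Delta_0)$.

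\textbf{Slow rate.} Take $\varepsilon:=\bar\delta(\log n)^{3/2}n^{-1/2}$. Since $\prob_X=\tfrac12(\prob_{b_0}+\prob_{b_1})$, averaging Lemma~\ref{lem-est} over the two classes gives $(\mathrm{E})\le (9+8C_0)/n$. This requires $n\ge N_{L,T,K_0}$ so that $\varepsilon\le 3/4$ and $n\ge 27$. Bounding $(\mathrm{MA})\le1$ trivially, \eqref{eq:masterdecomp} gives $\E_{\Samp_n}[\mathcal E(\widehat\Phi)]\le 2\varepsilon+O(1/n)\lesssim v_n\log n$, because $\varepsilon\asymp (\log n)^{3/2}n^{-1/2}=v_n\log n$.

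\textbf{Fast rate.} With the same $\varepsilon$, apply Proposition~\ref{prop:margin}, which requires $\varepsilon\le\min\{T\Delta/64,1/16\}$. If this holds, $(\mathrm{MA})\le C_{\mathrm{margin}}(LT)\,\varepsilon/(T\Delta)$, and therefore
\[
\E_{\Samp_n}[\mathcal E(\widehat\Phi)]\le \frac{2C_{\mathrm{margin}}(LT)\,\varepsilon^2}{T\Delta}+\frac{9+8C_0}{n}\lesssim \frac{v_n^2(\log n)^2}{\Delta_0}+\frac1n .
\]
The term $1/n$ is dominated because $\Delta_0\le 2L$ and $v_n^2(\log n)^2\ge 1/n$. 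If instead $T\Delta<64\varepsilon$, then $\Delta\lesssim v_n\log n$, so $v_n^2(\log n)^2/\Delta\gtrsim v_n\log n$. The slow bound already gives the claim in that regime, and the fast bound holds trivially there. Combining the two cases, $\E\mathcal E\lesssim\min\{v_n\log n,\,v_n^2(\log n)^2/\Delta\}$, which is at most the right-hand side of \eqref{eq:mainrate} because $\Delta\ge\Delta_0$. The bound is uniform, so taking the supremum over $\mathcal{B}(L,\Delta_0)$ is immediate.

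\textbf{Main obstacle.} The delicate part is bookkeeping rather than new analysis. I must confirm that every constant, namely $\delta_i$, $C_{\mathrm{margin}}$, the threshold $N_{L,T,K_0}$, and the $C_{\mathrm{est},j}$ normalisation in Assumption~\ref{ass:est}, is bounded uniformly over $\mathcal{B}(L,\Delta_0)$. I must also confirm that the conditioning on $N_i$ and the case split in $\Delta$ relative to $\varepsilon$ leave no gap. The uniform bound on $C_{\mathrm{est},i}$ follows from continuity and monotonicity in $b_i$ of the expression in \eqref{eq:Cest}, so I expect this step to go through.
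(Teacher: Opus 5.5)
Your proposal is correct and follows essentially the paper's argument. You use the decomposition \eqref{eq:masterdecomp} at level $\varepsilon\asymp v_n\log n$, bound the estimation term by Lemma~\ref{lem-est} with $\delta_i$ controlled uniformly through $C_{\mathrm{est},i}$, and bound the margin term either by Proposition~\ref{prop:margin} when $T\Delta\ge 64\varepsilon$ or trivially by $1$ otherwise. The only cosmetic difference is that you take a single uniform $\varepsilon\ge\max_i\varepsilon_i$ in the marginal decomposition, which is justified by monotonicity of the tail probability in $\varepsilon$, whereas the paper conditions on $Y=j$ and uses label-dependent $\varepsilon_j$.
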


\begin{rem}[Comparison with related models]
	Our conclusions are consistent with those obtained for the Gaussian white-noise model under a smoothness assumption by Gadat et al.\ \cite{gadat2020} and Trabs et al.\ \cite{markus2023}.
	The behaviour transitions continuously between a slow rate $\log^{3/2}n/\sqrt n$ (small separation) and a fast rate $1/(T \Delta\, n)$ up to logarithmic factors.
	The extra $\log^{3}n$ in the fast regime is heavier than the $\log^2 n$ of \cite{gadat2020}; in the special LDA-type setting one can reduce the logarithmic factor further.

	Denis et al.\ \cite{denis2024} show that, with known diffusion and a space-dependent drift satisfying a smoothness and boundedness condition, a plug-in classifier achieves the rate $n^{-\frac{s}{2s+1}}$ in a multi-class classification problem. The role of boundedness is explicit in their argument, and their result attains only the standard generic rate for classification problems. Our analysis is designed for the more general setting of an unbounded space-dependent drift and achieves the fast rate whenever the drift distance is bounded from below.
\end{rem}

\par\smallskip\noindent\textbf{Simulations.}\enspace
We illustrate the excess risk results in the time-homogeneous model with $T=5$ and $b_1=-1$, $b_0 = b_1-\Delta$.
For each $\Delta$ and each $n \in \{50,100,500,1000,5000,10000,50000\}$, the drifts are estimated by maximum likelihood from $n$ training paths per class, and the plug-in classifier $\widehat{\Phi}_n$ is defined as in~\eqref{eq:plugin}.
Paths are simulated by the Euler scheme with $2048$ time steps.
For each drift configuration, the estimation is repeated $R=12$ times with independent training data; within each repetition and class, the training sets are nested across $n$.
For each $\Delta$, we generate a test sample of $M=10^7$ paths drawn from the mixture with class probabilities $(1/2,1/2)$, independently of the training data.
The same test sample is shared across all values of $n$ and all repetitions.
The mean excess risk is computed by
\begin{equation*}
	\frac{1}{R} \sum_{r=1}^{R} \frac{1}{M} \sum_{k=1}^M |2\eta(X_k)-1|\,\indic \bigl(\widehat\Phi^{(r)}_n (X_k)\neq\Phi^*(X_k) \bigr).
\end{equation*}

\begin{figure}[t]
 \centering
 \includegraphics[width=0.8\linewidth]{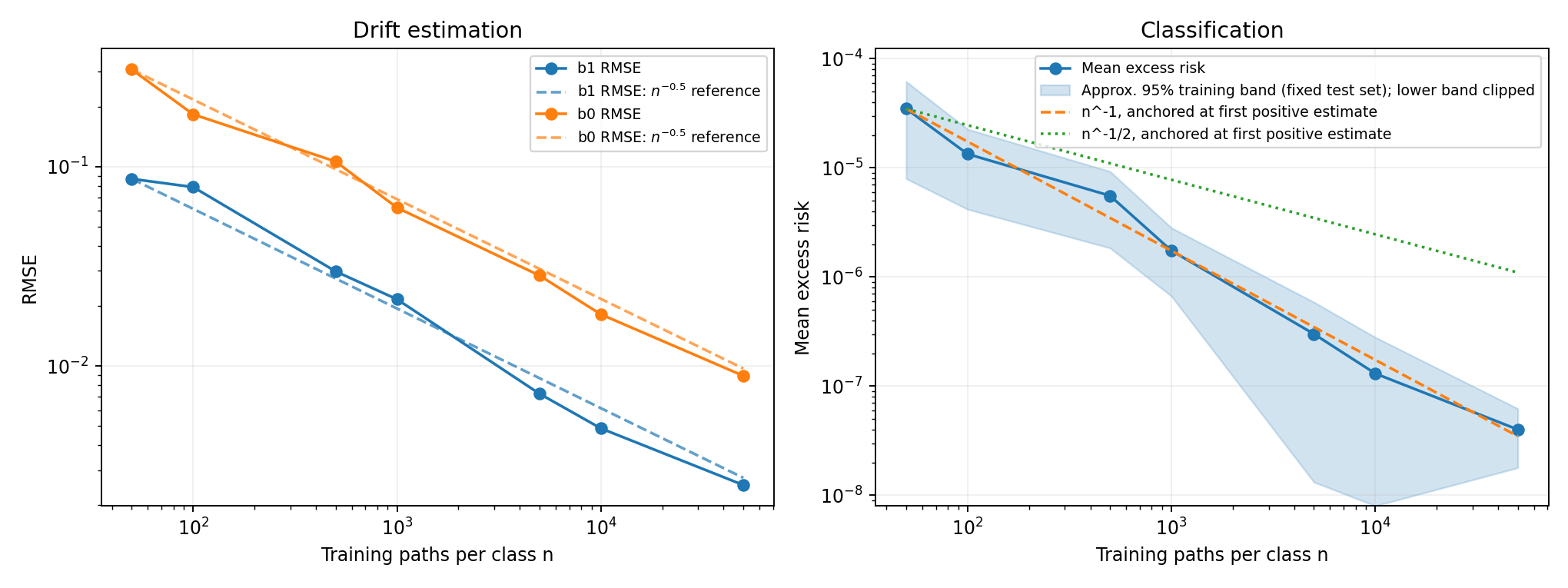}
 \footnotesize
 \caption{Constant-drift estimation and classification for $b_1=-1$, $b_0=-11$ ($\Delta=10$).
 Left: empirical RMSE for each drift coefficient. Right: mean excess risk.
 }
 \label{fig:constant-rmse-risk}
\end{figure}

Figure~\ref{fig:constant-rmse-risk} shows the root mean squared error of the parametric estimators (left) and the mean excess risk (right) against $n$ on a log--log scale.
The estimation error follows the parametric rate $n^{-1/2}$ of Assumption~\ref{ass:est} when $n$ gets large, while the excess risk is consistent with the rate $n^{-1}$, in agreement with the fast regime of Theorem~\ref{thm:mainconstant}.
At $n = 10000$ and $n=50000$, the estimated excess risks are approximately $10^{-7}$ and $4 \times 10^{-8}$, respectively.

\begin{figure}[t]
 \centering
 \includegraphics[width=0.8\linewidth]{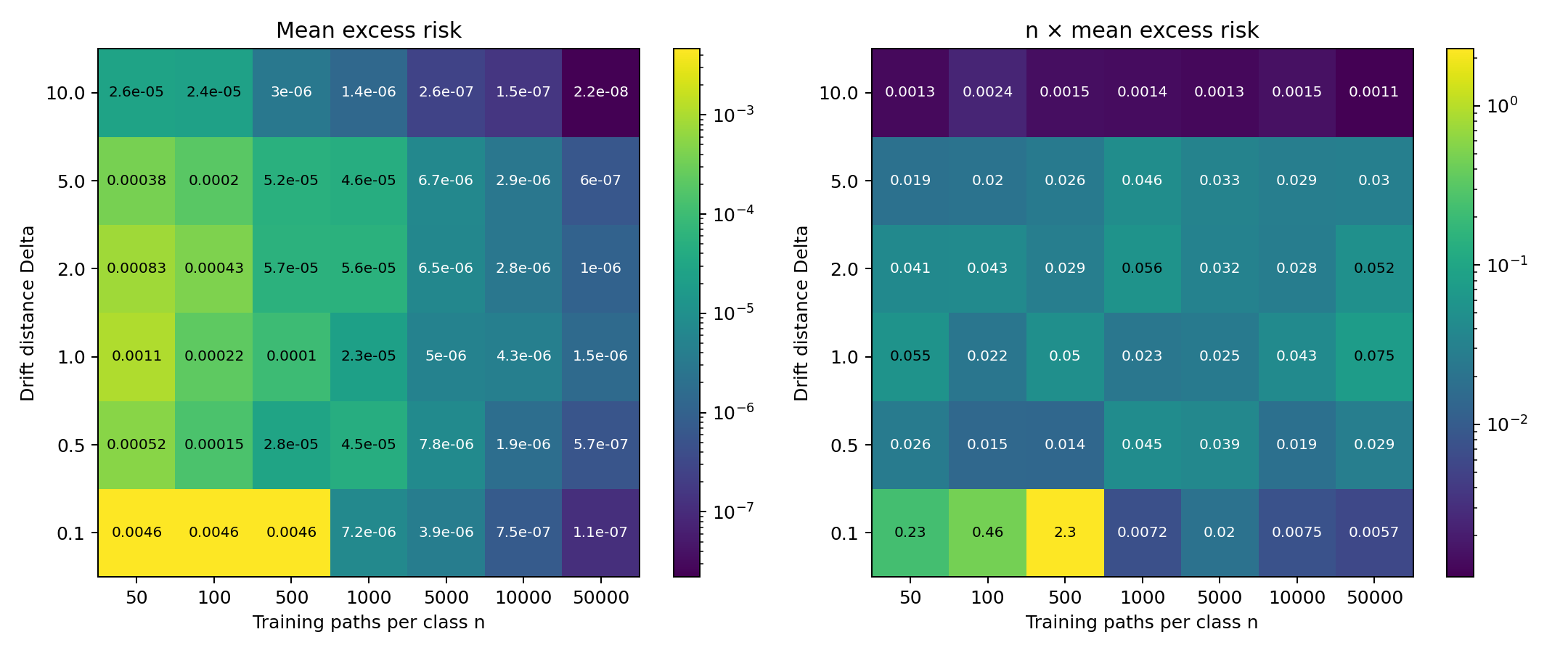}
 \footnotesize
 \caption{Constant-drift classification across training sample sizes and drift distances, with $b_1=-1$, $b_0=-1-\Delta$.
 Left: estimated mean excess risk $\mathcal E$.
 Right: the normalized quantity $n \mathcal E$.
 Color scales are logarithmic and cell labels give the numerical estimates.}
 \label{fig:constant-heatmap}
\end{figure}

Figure~\ref{fig:constant-heatmap} reports the mean excess risk $\mathcal{E}$ (left) and $n$ times mean excess risk (right) over a grid of $(\Delta, n)$.
For $\Delta \in [0.5, 10]$ the product $n \times \mathcal{E}$ is essentially constant in $n$, confirming the fast rate uniformly over this range.
The observed dependence on $\Delta$ is consistent with the factor $1/\Delta$ for $\Delta \ge 2$ along the $\Delta$-axis.
The transition from the slow rate to the fast regime is visible for $\Delta=0.1$.
Over the test configurations, the estimates are broadly compatible with an $n^{-1}$ decay.
The dependence on $\Delta$, on the other hand, differs from that of the upper bound~\eqref{eq:mainrate} in the regime $n^{-1/2} \ll \Delta \lesssim 1$, where the empirical excess risk is of order $\Delta n^{-1}$ rather than $(n\Delta)^{-1}$.
The same phenomenon occurs for linear discriminant analysis with two classes $\mathcal N (\mu_i, I_p)$ with $\Vert \mu_1-\mu_0 \Vert = \Delta$.
The estimation error decomposes into a shift induced by the estimated midpoint and a component orthogonal to the direction $\mu_1-\mu_0$, which contribute an excess risk of order $\Delta n^{-1}$ and $(p-1)(n\Delta)^{-1}$ respectively in Section 6.6 of \cite{anderson2003}.
In dimension $p=1$ only the former is present and yields an excess risk of order $\Delta n^{-1}$ for $\Delta \lesssim 1$.
In dimension $p \ge 2$ the orthogonal component dominates for $\Delta \lesssim 1$. This is the term behind the rate $1/\Delta$ in \cite{markus2023} when $p > 1$ and in \cite{gadat2020}.
The time-inhomogeneous case of Section~\ref{sec:inhomo} is of the multidimensional type.
We do not pursue this refinement here, since the dependence on $n$ is unaffected.

\section{Results in the time-inhomogeneous case} \label{sec:inhomo}

We now study the excess-risk convergence rate of the plug-in classifier for the time-inhomogeneous Ornstein--Uhlenbeck classification problem.
Recall the notation at the end of Section~\ref{sec:setting}.
We impose the following assumption on the drift function.
\begin{assu} \label{ass:drifts}
  Let $T > 0$. We assume
  \begin{enumerate}
    \item $b_0$, $b_1 \in L^2([0,T])$.
    \item $a = b_1- b_0$ is nonnegative with $a \not\equiv 0$ and Lipschitz continuous on $[0,T]$, and $\psi \in L^{\infty} ([0,T])$. Set $C_{\psi} := \Vert \psi \Vert_{L^{\infty}}$.
    \item One of the following holds almost everywhere on $[0,T]$:
    \begin{itemize}
        \item[$(-)$] $\psi(r) \ge 0$;
        \item[$(+)$] $\psi(r) \le -c_\psi$ for a constant $c_\psi \in (0, C_\psi]$. Define the \textit{spectral-ratio parameter}
        \begin{equation}\label{eq:rho}
		\rho:=\frac{C_\psi}{c_\psi}\left(\frac{C_\omega}{c_\omega}\right)^{4} < \infty.
	    \end{equation}
    \end{itemize}
  \end{enumerate}
\end{assu}
We assume that the two drift functions are ordered throughout the observation interval. The second assumption fixes the orientation of the two classes, which is symmetric after exchanging the labels.
It also gives $D = \int_0^T a(t) \diff t = \Vert a \Vert_{L^1} >0$.
The function $\psi$ determines the sign of the Volterra part of the spectrum of the second-chaos operator associated with the log-likelihood ratio; see
Section~\ref{ssec:factorization}. The multiplier appearing there is exactly
$-F'(r)=-\psi(r)\,e^{2\int_0^r b_i}$ for both labels $i\in\{0,1\}$.
The third assumption requires this multiplier to be of one sign.
In regime $(+)$ all eigenvalues of the operator are positive; see Section~\ref{ssec:density}.

\begin{rem} \label{rem:rhobound}
    The parameter $\rho$ also plays a role in estimating the ratio between the bounds of the sandwich function $-F'$ in the operator decomposition in Section~\ref{ssec:factorization} with $c_F = c_{\psi} /C_{\omega}^2$ and $C_F = C_{\psi} / c_{\omega}^2$, so that $\rho = C_{\omega}^2 C_F / (c_{\omega}^2 c_F)$.
    For $b_i \in L^{\infty}$ with $\Vert b_i \Vert_{L^{\infty}} \le L$, we have $\rho \le C_{\psi}e^{8LT} /c_{\psi}$ depending only on $L$, $T$ and the bounds of $\psi$.
    In regime $(-)$ no such condition is needed:
	there is at most one positive eigenvalue and the magnitude of other eigenvalues plays no role in the density upper bound; see Section~\ref{ssec:cJJ-evs}.
\end{rem}

To obtain nonparametric results analogous to the constant case, we introduce the following set of Hölder functions with smoothness $s$. For radius $R > 0$, $s >0$ and $k:=\lceil s\rceil-1$ define the Hölder ball
\begin{equation} \label{eq:hoelder}
    \begin{aligned}
    \Sigma(s,R):=\Bigl\{f:[0,T]\to\R \;:\; &
	f\in C^{k}([0,T]),\ \Vert f \Vert_{\infty} \le R, \\
    & \bigl|f^{(k)}(x)-f^{(k)}(y)\bigr|\le R \,|x-y|^{\,s-k}
	\ \forall x,y\in[0,T]\Bigr\}.
    \end{aligned}
\end{equation}
Note that $b_0$, $b_1\in\Sigma(s,R)$ with $s \ge 1$ implies the regularity requirements of Assumption~\ref{ass:drifts} parts 1--2.
The drifts are Lipschitz, so $a=b_1-b_0$ is Lipschitz with $a'\in L^\infty$ and $\psi=a'+a(b_0+b_1)\in L^\infty$.
For $s<1$ the Lipschitz continuity of $a$ in Assumption~\ref{ass:drifts} is a separate requirement on the difference of the drifts, which may be much smoother than the drifts themselves.

Similarly to the constant case, we consider drift pairs in the set
\begin{equation} \label{eq:classbinhomo}
    \mathcal{B}_{\mathrm{in}}(\rho_0, D_0) := \left\{(b_1,b_0) \text{ satisfying Assumption~\ref{ass:drifts}, } \rho \le \rho_0\text{ in regime }(+), \; D \ge D_0\right\}.
\end{equation}
The analysis reduces to a margin estimate and a tail estimate for the log-likelihood variable in \eqref{eq:Z}
with $(u,v) = (a, \pm a^2)$ for the margin term and $u = \hat b_i - b_i$, $v$ being a function of $u$ and $a$ for the estimation term; see \eqref{eq:M1}, \eqref{eq:M0}, \eqref{eq:E1} and \eqref{eq:E0} below.

\begin{prop} \label{prop:margininhomo}
Let $(X,Y)$ follow the model~\eqref{eq:OUmodel} with drift functions satisfying Assumption~\ref{ass:drifts} for fixed $T$.
There exists a constant $C_{\mathrm{margin}} (\rho)$ such that
\begin{equation}\label{eq:margininhomo}
  \prob_X\!\left(\Bigl|\eta(X)-\tfrac12\Bigr|\le\varepsilon\right)
    \le \frac{C_{\mathrm{margin}} (\rho)\,\varepsilon}{D}, \quad \text{for all }  \varepsilon \le \min \left\{\frac{D}{64}, \frac{1}{16}\right\}
\end{equation}
where $C_{\mathrm{margin}}(\rho)$ depends explicitly on $\rho$ in regime $(+)$ and is a universal constant in regime $(-)$.
\end{prop}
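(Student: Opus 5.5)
We reduce the margin probability to small-ball probabilities of the log-likelihood ratio. Since $\eta=(1+e^{-\ell})^{-1}$ and, for $\varepsilon\le 1/16$, $|\eta-\tfrac12|\le\varepsilon$ implies $|\ell|\le \log\frac{1+2\varepsilon}{1-2\varepsilon}\le 8\varepsilon=:\varepsilon'$, we have
\[
\prob_X\bigl(|\eta(X)-\tfrac12|\le\varepsilon\bigr)\le \tfrac12\sum_{i\in\{0,1\}}\prob_{b_i}\bigl(|\ell(X;b_1,b_0)|\le 8\varepsilon\bigr).
\]
Under $Y=i$ we substitute $\diff X_t=b_i X_t\diff t+\diff W_t$ into \eqref{eq:llr}. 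Under $Y=1$ this gives $\ell=Z(a,a^2)$, and under $Y=0$ it gives $\ell=Z(a,-a^2)$, in the notation \eqref{eq:Z}. Hence it suffices to show that $\prob_{b_i}(|Z|\le \varepsilon')\le C\varepsilon'/D$ for $\varepsilon'\le D/8$.

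Next we use the second-chaos representation of Section~\ref{ssec:chaos}. We write $Z=\E_i Z+\sum_k\lambda_k(\xi_k^2-1)$ with i.i.d.\ standard Gaussians $\xi_k$ and eigenvalues $\lambda_k$ of the Hilbert--Schmidt operator associated with the kernel of $Z$. The mean is $\E_iZ=\pm\tfrac12\int a^2\E_i X_t^2\,\diff t$, which has the harmless sign. Using the factorization of Section~\ref{ssec:factorization}, we rewrite the eigenproblem as a Sturm--Liouville boundary value problem. Its Volterra part has multiplier $-F'=-\psi e^{2\int_0^r b_i}$, which is sandwiched between $c_F$ and $C_F$.

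The density bound for a generalized $\chi^2$ variable in the spirit of Ulyanov \cite{ulyanov1991} controls $\sup_x p_Z(x)$ by $C(\lambda_1\lambda_2)^{-1/2}$ when two eigenvalues of the same sign are available, or by a bound through $|\lambda_1|^{-1}$ when one eigenvalue carries enough spectral mass relative to the others.

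The two regimes are then handled separately.
\begin{itemize}
\item In regime $(-)$ there is at most one positive eigenvalue. The density of $Z$ near $0$ is then bounded by conditioning on the negative part. The contribution $\lambda_+\xi^2$ has density at most $(2\pi\lambda_+ x)^{-1/2}$, so integrating over a window of width $\varepsilon'$ gives a bound of order $\varepsilon'/\lambda_{\max}$. This uses only the size of the leading eigenvalue, whichever sign it has.
\item In regime $(+)$ all eigenvalues are positive. The comparison $c_F\le -F'\le C_F$, together with $c_\omega\le\omega_i\le C_\omega$, gives two-sided bounds $\lambda_2\ge c(\rho)\lambda_1$ through min--max comparison with a constant-coefficient Sturm--Liouville problem. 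This yields a density bound of order $1/(\sqrt{c(\rho)}\,\lambda_1)$.
\end{itemize}

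In both regimes the remaining task is a lower bound on the leading eigenvalue, $|\lambda_1|\gtrsim D$ up to constants. We would obtain it from the variational characterization by testing the quadratic form on a suitable function, for instance $\omega_i$ itself. The form then evaluates to an expression involving $\int_0^T a\,\diff t=D$, and the factors $c_\omega, C_\omega$ are absorbed into $\rho$. The restriction $\varepsilon'\le D/8$ ensures that we stay in the region where the density bound, and not the trivial bound, is the relevant one.

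The main obstacle is turning the abstract density estimate into the precise form $C(\rho)\varepsilon/D$. This requires a lower bound on the top eigenvalue of order $D$ that is uniform in all other features of $a$, together with eigenvalue-ratio control in regime $(+)$, which is exactly where $\rho$ enters. Example~\ref{ex:counterunbounded} shows that without this control the bound fails. We would first try to establish both facts via comparison of Rayleigh quotients with the constant-coefficient problem, in the same way as in the proof of Proposition~\ref{prop:margin}.
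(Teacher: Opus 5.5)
Your reduction to $\prob_{b_i}(|\ell|\le 8\varepsilon)$ and the second-chaos representation are fine. The core of the plan, however, has a genuine gap. It rests on a \emph{global} density bound for $Z$ combined with a lower bound $|\lambda_{\max}|\gtrsim D$ and, in regime $(+)$, a $\rho$-controlled ratio of the two top eigenvalues. These eigenvalue claims are false.

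\textbf{Regime $(+)$.} The leading eigenvalue carries the rank-one boundary term $R$, whose eigenvalue is at least $TF(T)$, and this can dominate the Volterra part arbitrarily. Take constant drifts with $\kappa=-(b_0+b_1)\downarrow 0$. Then $\rho=e^{4|b_i|T}$ and $D=T\Delta$ stay fixed and $\lambda_0(A)\ge c_\omega^2T\Delta e^{2b_iT}$. But $\lambda_1(A)\lesssim \Delta\kappa T^2\to 0$ by Proposition~\ref{prop:const-spectrum}. So $\rho$ only controls ratios among the eigenvalues \emph{after} the leading one (Lemma~\ref{lem:ulyanov}), and the bound $(\lambda_0\lambda_1)^{-1/2}$ blows up. In fact $\sup_x f_{\bar Z}(x)$ is not bounded by $C(\rho)/D$ uniformly, since $\bar Z$ approaches $\lambda_0N_0^2$.

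\textbf{Regime $(-)$.} Take $b_0=0$, $b_1=\Delta$ and $Y=0$. Then $2\ell+D=\Delta W_T^2-\Delta^2\int_0^TW_t^2\diff t$. Its only positive eigenvalue is $\tanh^2x<1$, where $x\tanh x=\Delta T$, while $D=\Delta T$ is arbitrary. So $\lambda_+\gtrsim D$ fails. The negative eigenvalues cannot be used either, since $\psi\ge0$ has no lower bound in general.

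Your estimate $\varepsilon'/\lambda_+$ for the $\lambda_+\xi^2$ contribution also presupposes that the window sits at distance $\gtrsim\lambda_+$ from the singularity at $0$. Near $0$ you only get $\sqrt{\varepsilon'/\lambda_+}$, which is exactly the mechanism behind Example~\ref{ex:countermargin}.

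The missing idea is to \emph{locate} the window rather than bound the density globally.
\begin{itemize}
\item The trace identity of Lemma~\ref{lem:shift}, $\Tr A-2\E[Z]=\int_0^T a=D$ (the It\^o correction), gives $\bar Z:=\sum_j\lambda_jN_j^2=2Z+D$.
\item Hence $\{|Z|\le\delta\}=\{|\bar Z-D|\le 2\delta\}\subset\{\bar Z\ge D/2\}$ for $\delta\le D/4$. This is the actual role of the restriction $\varepsilon\le D/64$.
\item On $[D/2,\infty)$ the scale-free bound $\sup_{\theta>0}f_{\theta\chi^2_1}(x)=\beta_1/x$ does the work. Conditioning on the negative part gives $f_{\bar Z}(x)\le\sup_{w\ge x}f_P(w)$.
\item Splitting the convolution of $\lambda_0N_0^2$ with $S_+$ at $x/2$ then gives $f_P(w)\le 2(\beta_1+\bar\beta_m\Pi_m)/x$ (Theorem~\ref{thm:density}).
\end{itemize}
No lower bound on any eigenvalue is needed. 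In regime $(-)$ one has $S_+=0$ and a universal constant. In regime $(+)$ only the block Ulyanov factor $\Pi_m$ of the eigenvalues beyond $\lambda_0$ enters, which is where $\rho$ appears.
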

\begin{cor}
	The model \eqref{eq:OUmodel} under Assumption~\ref{ass:drifts} satisfies Assumption~\ref{ass:margin} with
	\begin{equation} \label{eq:assmargininhomo}
		\alpha = 1, \quad C_{\mathrm{M}} = \max \left\{\frac{C_{\mathrm{margin}}(\rho)}{D}, \; 16 \right\}
	\end{equation}
	for all $\varepsilon > 0$.
\end{cor}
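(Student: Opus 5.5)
The corollary follows from Proposition~\ref{prop:margininhomo} by a case split in $\varepsilon$, exactly as in the constant-drift corollary. The only new point is that the admissible range in \eqref{eq:margininhomo} is $\varepsilon\le\min\{D/64,1/16\}$ instead of all $\varepsilon\le 1/16$. We therefore have to cover the window $D/64<\varepsilon\le 1/16$, which is nonempty when $D<4$, by a separate argument.

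First, for $\varepsilon\le\min\{D/64,1/16\}$, Proposition~\ref{prop:margininhomo} directly gives
\[
\prob_X\bigl(|\eta(X)-\tfrac12|\le\varepsilon\bigr)\le \frac{C_{\mathrm{margin}}(\rho)\,\varepsilon}{D}\le C_{\mathrm M}\,\varepsilon .
\]
Second, for $\varepsilon>1/16$ we use the trivial bound: the probability is at most $1<16\varepsilon\le C_{\mathrm M}\varepsilon$.

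Third, consider the intermediate range $D/64<\varepsilon\le 1/16$, which only occurs if $D<4$. Here we use the trivial bound once more, but compare it with the first term of the maximum. We have $\varepsilon/D>1/64$, so
\[
\frac{C_{\mathrm{margin}}(\rho)\,\varepsilon}{D}\ge \frac{C_{\mathrm{margin}}(\rho)}{64}.
\]
This is at least $1$ as soon as $C_{\mathrm{margin}}(\rho)\ge 64$. We may always enlarge the constant in Proposition~\ref{prop:margininhomo} to achieve this, since increasing it preserves \eqref{eq:margininhomo}; this mirrors the remark $C_{\mathrm{margin}}\ge 64$ in the constant case. Hence in this range
\[
\prob_X(\cdot)\le 1\le \frac{C_{\mathrm{margin}}(\rho)\,\varepsilon}{D}\le C_{\mathrm M}\varepsilon .
\]
Combining the three ranges yields \eqref{eq:margin} with $\alpha=1$ and $C_{\mathrm M}$ as in \eqref{eq:assmargininhomo}, for all $\varepsilon>0$.

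There is no real obstacle here. The one point requiring care is the normalization $C_{\mathrm{margin}}(\rho)\ge 64$, which is what lets the cutoff $D/64$ be absorbed. If one prefers not to modify the constant, one can instead replace $C_{\mathrm{margin}}(\rho)$ by $\max\{C_{\mathrm{margin}}(\rho),64\}$ in \eqref{eq:assmargininhomo}. This leaves its dependence on $\rho$ in regime $(+)$, and its universality in regime $(-)$, unchanged.
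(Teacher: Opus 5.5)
Your three-range argument is correct and is essentially the paper's: the proof of Proposition~\ref{prop:margininhomo} already handles $D/64<\varepsilon\le 1/16$ via $C_{\mathrm{margin}}\varepsilon/D\ge 64\varepsilon/D>1$, and $\varepsilon>1/16$ is covered by the trivial bound $1\le 16\varepsilon$, exactly as in the constant-drift corollary. The one difference is that no enlargement of the constant is needed. The paper's constant is $C_{\mathrm{margin}}=64c_d$ with $c_d\ge 1$ (and equal to $64$ in regime $(-)$), so $C_{\mathrm{margin}}(\rho)\ge 64$ holds automatically.
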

Analogous to Remark~\ref{rem:CMconst}, $C_{\mathrm{M}}$ is a constant in the sense of Assumption~\ref{ass:margin}.
Uniformly over $\mathcal{B}_{\mathrm{in}}(\rho_0, D_0)$,
the margin assumption holds with $\alpha=1$ and the constant $C_{\mathrm{M}}=\max\{C_{\mathrm{margin}}(\rho_0)/D_0, 16\}$ by \eqref{eq:Cmargindef}.
The proof is given in Section~\ref{ssec:density}.

The separation condition $D > 0$ and the Lipschitz continuity of $a$ in Assumption~\ref{ass:drifts} cannot be dropped simultaneously to obtain a margin assumption with $\alpha \ge 1$.
The following example is built so that the Volterra part of the log-likelihood operator in Section~\ref{ssec:factorization} vanishes ($\psi = 0$ a.e.) while $a$ has a single jump.
The spectrum then reduces to two eigenvalues of opposite sign and the singularity of the density sits exactly at the origin.
\begin{exa}[Critical example to the Margin Assumption with $\alpha=1$] \label{ex:countermargin}
    Let $T=1$ and fix
    $    0 < c \le \frac{1}{2e}$ to keep $b_i < 0$.
    Define the right-continuous bounded variation function
    \begin{equation*}
        a(t) =
        \begin{cases}
            c \exp(2t), & 0 \le t < \frac{1}{2}, \\
            - c \exp(2(t-\frac{1}{2})), & \frac{1}{2} \le t \le 1.
        \end{cases}
    \end{equation*}
    Put
    \begin{equation*}
        b_1(t) = -1 + \frac{a(t)}{2}, \quad b_0(t) = -1 - \frac{a(t)}{2}.
    \end{equation*}
    Then we have $\psi = a' - 2a = 0$ on both pieces.
    The pair violates Assumption~\ref{ass:drifts} that $a$ changes sign, is not Lipschitz (jumps by $-c(1+e)$ at $t=1/2$), and $D= \int_0^1 a = 0$.
    By Lemma~\ref{lem:countermargin-reduct} the log-likelihood ratio in \eqref{eq:llr} is of rank two with opposite sign eigenvalues.
    Conditionally on $Y=i$
    \begin{equation} \label{eq:ellrank2}
        \ell(X; b_1, b_0) = \frac{c}{2} \bigl[(1+e)X_{\frac{1}{2}}^2 - e X_1^2\bigr] = \lambda_+ \xi_1^2 - \lambda_- \xi_2^2
    \end{equation}
    with $\lambda_{\pm}=\lambda_{\pm}(i) >0$ and independent standard normal variables $\xi_1$, $\xi_2$.
    By Lemma~\ref{lem:countermargin},
    there are $C_{c}$, $t_{c}>0$ such that
    \begin{equation*}
        \prob_{X} \left(\left| \eta(X) - \frac{1}{2} \right| \le t \right) \ge \frac{1}{2} \sum_i \prob_{b_i} \left(|\ell| \le 4 t \right) \ge C_{c} t \log \left(\frac{1}{t}\right), \quad 0 < t \le t_c,
    \end{equation*}
    with $C_c := \min_i c(\lambda_+(i),\lambda_-(i))$ and $t_c:=\min{\min_i t_0(i)/4,1/16}$, using $4t \log(1/(4t)) \ge 2t\log (1/t)$ for $t\le 1/16$.
    The margin assumption fails for any $\alpha \ge 1$ and Theorem~\ref{thm:maininhomo} does not apply.
    This example marks the boundary of the linear margin assumption.
\end{exa}

The next example shows that a lower bound on the separation $D$ alone does not suffice to guarantee a uniform margin constant.
The following family keeps $D=1$ while the two classes merge in distribution, which is in the proposition reflected by the spectral-ratio parameter $\rho$ diverging to infinity.
\begin{exa}[Example of $\rho$ unbounded] \label{ex:counterunbounded}
    Let $T=1$. For $\tau > 0$ set
    \begin{equation*}
    L:=\log\frac1\tau\ (\ge784),\qquad c:=\frac{1}{1+L},\qquad
    a(t):=\frac{c}{t\vee\tau},\qquad
    b_1=\frac{a-2}{2},\quad b_0=-\frac{a+2}{2}.
    \end{equation*}
    By Lemma~\ref{lem:counterrho}, every pair $(b_1,b_0)$ in this family satisfies Assumption~\ref{ass:drifts} in regime $(+)$ with $D=1$, while
    \begin{equation} \label{eq:rho-infty}
        \frac{\tau^{-2}+2\tau^{-1}}{3} \le \rho \le \frac{e^8}{\tau^2}, \quad \rho \longrightarrow \infty, \quad \text{ for } \tau \downarrow 0.
    \end{equation}
    With $\varepsilon_\tau:=7/ (4\sqrt L) \le 1/16$ for $\tau \le e^{-784}$,
    \begin{equation} \label{eq:rho-margin}
        \prob_X \Bigl(\big|\eta(X)-\frac12\big|\le\varepsilon_\tau\Bigr)\ \ge\ \frac{3}{8}
    \end{equation}
    Any constant $C$ for which $C \varepsilon_{\tau} / D$ bounds the margin probability must satisfy $C \ge 3 \sqrt{L}/14 \to \infty$.
    Hence, no positive margin exponent admits a uniform constant over $\{\tau > 0 \}$, although $D$ is fixed.
    In this family, $a$ concentrates near $t = 0$, where $X_t$ is still small.
    A uniform statement needs a parameter beyond $D$ over the family under consideration.
    We take $\rho$ in Proposition~\ref{prop:margininhomo}.
\end{exa}

When $b_0$, $b_1 \in \Sigma(s, R)$, standard nonparametric estimators built from $N$ i.i.d. paths of the class attain the classical rate $N^{-s/(2s+1)}$ in $L^2([0,T])$, cf. \cite{comte2023, denis2021}.
We do not fix a particular estimator and instead formulate the requirement on the estimators.
\begin{assu}\label{ass:estinhomo}
Let $N_i$ be the number of sample paths with label $i$ and let $s > 0$ be the smoothness index for the drift pairs. There exist estimators
$\hat b_i$, built from the $N_i$ paths of class $i$, and constants $K_s, C_s>0$ such that for $i, j \in \{0,1\}$ and all $n_i \in \N$,
\begin{equation}\label{eq:estcondinhomo}
	\prob_{\Samp_n} \left( \left\Vert \hat{b}_i-b_i\right\Vert_{X, j}  \ge K_s \left( \frac{\log N_i}{N_i}\right)^{\frac{s}{2s+1}} \; \Big\vert \; N_i=n_i \right) \le \frac{C_s}{n_i}.
\end{equation}
$C_s$ depends only on $s$ and not on $n_i$ or $N_i$.
\end{assu}
\begin{rem}
    Standard nonparametric projection estimators and their associated convergence rates are available for this model; see Comte and Genon-Catalot (2023, 2024) \cite{comte2023, comte2024}. For discrete or sparse observations see Mohammadi et al. (2024) \cite{Mohammadi2024}.

    For the canonical sieve maximum likelihood estimator of the degree $\lceil s \rceil -1$ on $\lceil (N_i/\log N_i)^{1/(2s+1)} \rceil$ intervals, one may take $C_s = c_1 + c_2 \lceil s \rceil$ with numerical constants $c_1, c_2$ independent of $s$ and $K_s$ bounded by a constant depending on $s$, $R$ and $T$; see Lemma~\ref{lem:sieveMLE}.
    This reflects the parametric requirement that the polynomial degree grows with $s$, cf. Comte and Genon-Catalot (2020)~\cite{comte2020}
    in the time-homogeneous case.
\end{rem}

The estimation error is bounded in the following lemma. Note that Assumption~\ref{ass:drifts} (3) is not needed here.
 \begin{lem} \label{lem-estinhomo}
	Let $\hat{b}_i$ be estimators satisfying Assumption~\ref{ass:estinhomo} for $i,j \in \{0,1\}$.
    Set
	\begin{equation*}
		\delta_{n, j} := 48 \left(2\sqrt{2} K_s + 8\, K_s^2 \bigl(\log n\bigr)^{\frac{s}{2s+1}} n^{-\frac{s}{2s+1}} + 8 \, K_s \Vert a \Vert_{X,j} \right).
	\end{equation*}
    Let
    \begin{equation} \label{eq:choiceepsinhomo}
	\varepsilon_j := \delta_{n,j} \bigl(\log n\bigr)^{\frac{3s+1}{2s+1}} n^{-\frac{s}{2s+1}}.
	\end{equation}
	For $n\ge 27$ large enough such that $0 < \varepsilon_j \le 3/4$,
	the plug-in posterior probability $\widehat{\eta}(X) = \eta(X; \hat{b}_1, \hat{b}_0)$ has the estimation error
	\begin{align*}
		\prob_{\Samp_n, b_j} \left(\vert \widehat{\eta}(X)-\eta(X) \vert > \varepsilon_j \right)
        \le \frac{9+8C_s}{n},
	\end{align*}
    where $C_s$ is the constant depending on $s$ in Assumption~\ref{ass:estinhomo}.
\end{lem}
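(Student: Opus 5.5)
We prove Lemma~\ref{lem-estinhomo} by reducing the event $\{|\widehat\eta-\eta|>\varepsilon_j\}$ to a tail event for a quadratic functional of the form \eqref{eq:Z}, conditionally on $Y=j$ and on the training sample, and then taking expectation over $\Samp_n$.

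\textbf{Step 1 (Lipschitz reduction).} The map $x\mapsto(1+e^{-x})^{-1}$ is $1/4$-Lipschitz. Hence $|\widehat\eta-\eta|\le \tfrac14|\ell(X;\hat b_1,\hat b_0)-\ell(X;b_1,b_0)|$, and it suffices to bound $\prob(|\hat\ell-\ell|>4\varepsilon_j)$. Conditionally on $Y=j$, substitute $\diff X_t=b_j X_t\diff t+\diff W_t$ and write $u_i=\hat b_i-b_i$. The difference $\hat\ell-\ell$ is then a sum of terms of the form
\[
\int_0^T u_i X_t\diff W_t\qquad\text{and}\qquad \int_0^T v\, X_t^2\diff t ,
\]
with $v$ a linear combination of $u_i a$, $u_i(b_j-b_i)$ and $u_i^2$. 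Each term is a $Z(u,v)$ as in \eqref{eq:Z}, with coefficients that are deterministic given $\Samp_n$ because $X$ is independent of the training data.

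\textbf{Step 2 (good event).} Let $G$ be the event $\|u_i\|_{X,j}\le K_s(\log N_i/N_i)^{s/(2s+1)}$ for $i=0,1$, intersected with $N_i\ge n/3$. A Chernoff bound for $N_i\sim\mathrm{Bin}(n,1/2)$ gives $\prob(N_i<n/3)\le e^{-cn}\le 1/n$ for $n\ge27$. Assumption~\ref{ass:estinhomo}, conditioned on $N_i$, gives probability at most $C_s/N_i\le 3C_s/n$ per class. Hence $\prob(G^c)\le (2+6C_s)/n$; the constants can be adjusted to fit the budget $(9+8C_s)/n$. On $G$ the rate is at most $r_n:=3K_s(\log n/n)^{s/(2s+1)}$ up to numerical factors.

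\textbf{Step 3 (chaos tail bounds).} On $G$ we bound each piece separately.

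For the stochastic integral, $M=\int u X\diff W$ is a second-chaos variable with $\E_j M^2=\|u\|_{X,j}^2$. Hypercontractivity (or the generalized $\chi^2$ representation of Section~\ref{ssec:chaos} together with Hanson--Wright) gives
\[
\prob\bigl(|M|>C\|u\|_{X,j}\log n\bigr)\le 1/n .
\]

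For the quadratic terms, $\int |v|X_t^2\diff t$ is a nonnegative chaos variable with mean $\int|v|\E_j X_t^2$. Cauchy--Schwarz bounds the mean by $\|u_i\|_{X,j}\|a\|_{X,j}$ for the cross term with $a$, and by $\|u_i\|_{X,j}^2$ for the square term. The $(b_j-b_i)$ cross term vanishes for $i=j$ and equals $\pm a u_i$ otherwise. Applying Markov's inequality to a $\log n$-th moment, i.e.\ exponential concentration for the second chaos, gives deviation above mean times $\log n$ with probability at most $1/n$.

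Summing the pieces yields a bound of order
\[
\bigl(2\sqrt2K_s+8K_s^2 r_n+8K_s\|a\|_{X,j}\bigr)\,r_n\,(\log n)^{\gamma},
\]
with failure probability a small multiple of $1/n$. This reproduces the three summands of $\delta_{n,j}$. The exponent $\frac{3s+1}{2s+1}=\frac{s}{2s+1}+1$ arises as the rate's logarithmic factor times one extra $\log n$ from the chaos tails.

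\textbf{Main obstacle.} The difficulty is obtaining uniform constants in the chaos tail bounds, which must not depend on the spectrum of $u$. For this I would use the general inequality $\prob(|Z-\E Z|>t)\le 2\exp(-ct/\|Z\|_{L^2})$, valid for any second-chaos $Z$ with $t\ge\|Z\|_{L^2}$. The key input is then bounding $\mathrm{Var}(\int v X_t^2\diff t)$ by $C(\int|v|\E X_t^2)^2$ via Gaussian covariance computations. The condition $\varepsilon_j\le3/4$ is used only to keep the thresholds in the nontrivial regime.
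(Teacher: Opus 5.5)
Your argument is correct in substance and has the same skeleton as the paper's proof, which derives the lemma from Lemma~\ref{lem:est-general} with $\phi_N=K_s(\log N/N)^{s/(2s+1)}$. Both use a good event built from Assumption~\ref{ass:estinhomo} and the concentration of $N_i$. Both reduce to functionals of the form \eqref{eq:Z} with coefficients frozen by $\Samp_n$. Both control second-chaos tails through $\Vert u_i\Vert_{X,j}$ and the Cauchy--Schwarz bound $\int_0^T|h_i|\,\E_j[X_t^2]\diff t\le\frac12\Vert u_i\Vert_{X,j}^2+\Vert a\Vert_{X,j}\Vert u_i\Vert_{X,j}$.

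Two tools differ.
\begin{itemize}
\item \emph{Reduction.} The paper uses the likelihood-ratio inequality $|\widehat\eta-\eta|\le 2|p_{\hat b_1}/p_{b_1}-1|+|p_{\hat b_0}/p_{b_0}-1|$ together with Lemma~\ref{lem:exp}. This costs a threshold $\varepsilon/12$ per piece, and it is where $\varepsilon\le 3/4$ enters. Your $\frac14$-Lipschitz bound $|\widehat\eta-\eta|\le\frac14|\Lambda_1-\Lambda_0|$ is cleaner. It makes the hypothesis $\varepsilon_j\le 3/4$ unnecessary and leaves a threshold $\varepsilon_j$ per piece.
\item \emph{Tails.} The paper applies the Bernstein bound of Lemma~\ref{lem:LM} to \eqref{eq:Chisquare}, with the norms from Lemma~\ref{lem:chaosnorms}. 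This yields exactly the thresholds \eqref{eq:est-threshold} from which the constants in $\delta_{n,j}$ were read off. You instead use hypercontractivity plus $\operatorname{Cov}(X_s^2,X_t^2)=2(\E X_sX_t)^2$. This is more elementary and avoids the trace-class argument. It still gives explicit constants: $\Vert F\Vert_{L^p}\le(p-1)\Vert F\Vert_{L^2}$ on the second chaos, and Markov's inequality with $p=\log n$ gives $\prob(|F|>e\Vert F\Vert_{L^2}\log n)\le 1/n$.
\end{itemize}

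Two points need tightening.

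First, the lemma fixes the constants, so ``of order'' is not enough. The check is easy on your route. Set $v_n:=(\log n/n)^{s/(2s+1)}$, so that on the good event $\Vert u_i\Vert_{X,j}\le 2K_sv_n$. The stochastic pieces then need only the threshold $2eK_sv_n\log n$. The quadratic pieces need $(2K_s^2v_n^2+2K_s\Vert a\Vert_{X,j}v_n)(1+\sqrt2\,e\log n)$. Both are far below $\varepsilon_j=\delta_{n,j}v_n\log n$, and the failure probabilities add up to $(2+8C_s)/n+4/n\le(9+8C_s)/n$.

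Second, the claim $\prob(N_i<n/3)\le 1/n$ for all $n\ge 27$ is false. The Chernoff exponent for this deviation is about $0.057\,n$, which beats $\log n$ only for $n\gtrsim 80$. Concretely, $\prob(\mathrm{Bin}(28,\tfrac12)\le 9)\approx 0.044>1/28$. Use the cut-off $n/4$ as the paper does. Hoeffding then gives $e^{-n/8}\le 1/n$ for $n\ge 27$, at a cost of $C_s/N_i\le 4C_s/n$ per class, which is exactly the budget above.
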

The main result on the upper bound is stated in the following theorem.
\begin{thm}[Excess-risk upper bound, time-inhomogeneous drift]\label{thm:maininhomo}
    Let $s$, $R$, $T$, $\rho_0$, $D_0 > 0$.
    Let $(\hat{b}_1, \hat{b}_0)$ be estimators satisfying Assumption~\ref{ass:estinhomo} for any drift pair $(b_1, b_0) \in \mathcal{B}_{\mathrm{in}}(\rho_0, D_0) \cap \Sigma (s, R)^2$ as defined in \eqref{eq:hoelder} and \eqref{eq:classbinhomo} with constants $K_s$ and $C_s$ uniformly over the drift set.
    We denote by
    \begin{equation} \label{eq:nonparaestrate}
    v_n : = (\log n/ n)^{\frac{s}{2s+1}}
    \end{equation}
    the standard convergence rate for nonparametric estimation in Assumption~\ref{ass:estinhomo}.
    There exists a constant $N_{s,R,T,K_s}$ depending on $s$, $R$, $T$ and $K_s$ such that for any $n \ge N_{s,R,T, K_s}$, the plug-in classifier
    $\widehat\Phi=\Phi(X;\hat b_1,\hat b_0)$ satisfies
    \begin{equation}\label{eq:mainrateinhomo}
    \sup_{\substack{(b_1, b_0) \in \mathcal{B}_{\mathrm{in}}(\rho_0, D_0) \\ b_0, b_1 \in \Sigma(s,R)}} \E_{\Samp_n}[\mathcal{E} (\widehat\Phi)] \;\lesssim\;
     v_n \; (\log n) \; \wedge \frac{v_n^2 \; (\log n)^2 }{D_0},
    \end{equation}
    where the implied constants in the inequalities depend only on $s$, $R$, $\rho_0$, $T$, $K_s$ and $C_s$.
\end{thm}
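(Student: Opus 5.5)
The plan is to use the decomposition \eqref{eq:masterdecomp} and choose the level $\varepsilon$ as the estimation level $\varepsilon_j$ of Lemma~\ref{lem-estinhomo}. We then bound the margin term by Proposition~\ref{prop:margininhomo} and the estimation term by Lemma~\ref{lem-estinhomo}, and we derive the slow bound $v_n\log n$ from the same decomposition using only the trivial margin bound.

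\emph{Step 1: conditioning and uniform level.} Condition on $Y$ and on the label counts $N_0,N_1$. Since $Y\sim\mathrm{Ber}(1/2)$, the counts satisfy $N_i\ge n/4$ outside an event of probability $e^{-cn}$ by Hoeffding's inequality, and this event contributes at most $e^{-cn}$ to the excess risk. This is already built into Lemma~\ref{lem-estinhomo}, which gives
\[
\prob_{\Samp_n,b_j}\bigl(|\widehat\eta(X)-\eta(X)|>\varepsilon_j\bigr)\le(9+8C_s)/n .
\]
Set $\varepsilon:=\max_j\varepsilon_j$. Then
\[
(\mathrm E)\le\tfrac12\sum_j\prob_{\Samp_n,b_j}(\cdot)\le(9+8C_s)/n .
\]
Next we make $\delta_{n,j}$ uniform over the drift class. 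For $b_i\in\Sigma(s,R)$ we have $\|b_i\|_\infty\le R$, so $\E_j[X_t^2]\le Te^{2RT}$ and $\|a\|_{X,j}\le 2RTe^{RT}$. The remaining term $K_s^2v_n$ is bounded by $K_s^2$. Hence $\delta_{n,j}\le\bar\delta(s,R,T,K_s)$, and
\[
\varepsilon\le\bar\delta\,(\log n)^{(3s+1)/(2s+1)}n^{-s/(2s+1)}=\bar\delta\,v_n\log n .
\]
Choose $N_{s,R,T,K_s}\ge 27$ so that this is at most $\min\{1/16,\,3/4\}$ for $n\ge N$.

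\emph{Step 2: the two regimes.} Using only $(\mathrm{MA})\le1$, the decomposition gives
\[
\E_{\Samp_n}[\mathcal E(\widehat\Phi)]\le 2\varepsilon+(9+8C_s)/n\lesssim v_n\log n .
\]
This is the first bound, and it needs no separation. For the fast bound, if $\varepsilon\le D/64$ then Proposition~\ref{prop:margininhomo} applies uniformly over $\mathcal B_{\mathrm{in}}(\rho_0,D_0)$ with constant $C_{\mathrm{margin}}(\rho_0)$. Here we use that $C_{\mathrm{margin}}(\rho)$ is nondecreasing in $\rho$, or else we replace it by its supremum over $\rho\le\rho_0$; the paper indicates this via \eqref{eq:Cmargindef}. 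In regime $(-)$ the constant is universal. Noting $\{|2\eta-1|\le2\varepsilon\}=\{|\eta-\tfrac12|\le\varepsilon\}$, we get
\[
2\varepsilon\,(\mathrm{MA})\le 2C_{\mathrm{margin}}(\rho_0)\,\varepsilon^2/D\le 2C_{\mathrm{margin}}(\rho_0)\,\bar\delta^{\,2}\,v_n^2(\log n)^2/D_0 .
\]
The remainder $1/n$ is dominated by $v_n^2(\log n)^2/D_0$. The reason is that $v_n^2\ge \log n/n$ and $D\le\|a\|_{L^1}\le 2RT$, so $D_0\le 2RT$, and hence $1/n\le 2RT\,v_n^2/D_0$.

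\emph{Step 3: combining.} If instead $\varepsilon>D/64$, then
\[
v_n^2(\log n)^2/D_0\ge v_n^2(\log n)^2/D\gtrsim v_n\log n\cdot(v_n\log n/\varepsilon)\gtrsim v_n\log n ,
\]
because $\varepsilon\lesssim v_n\log n$. So in this case the slow bound already equals the minimum up to constants. In both cases the excess risk is $\lesssim\min\{v_n\log n,\ v_n^2(\log n)^2/D_0\}$, with constants depending only on $s,R,\rho_0,T,K_s,C_s$. Taking the supremum over the class completes the argument.

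The main obstacle is uniformity. All constants must be controlled independently of the particular pair, namely $\delta_{n,j}$ through $\|a\|_{X,j}$ and $C_{\mathrm{margin}}$ through $\rho\le\rho_0$. The indexing of the estimation bounds by the test label $j$ must also be handled correctly when forming $(\mathrm E)$ as a mixture over $j$. The margin and deviation analysis itself is inherited from Proposition~\ref{prop:margininhomo} and Lemma~\ref{lem-estinhomo}.
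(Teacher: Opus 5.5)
Your proposal is correct and follows the paper's proof essentially step for step. It uses the decomposition \eqref{eq:masterdecomp} at the level of Lemma~\ref{lem-estinhomo}, the uniform bound on $\delta_{n,j}$ via $\Vert a\Vert_{X,j}$ on $\Sigma(s,R)$, the trivial margin bound for the slow rate, and Proposition~\ref{prop:margininhomo} with $C_{\mathrm{margin}}(\rho_0)$ once $\varepsilon\le D/64$. Your differences are minor: you work with the single unconditional level $\varepsilon=\max_j\varepsilon_j$ instead of conditioning on $Y=j$, and you spell out why $1/n$ is absorbed (via $D_0\le 2RT$) and why the slow bound already lies below $v_n^2(\log n)^2/D_0$ when $\varepsilon>D/64$. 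These only make explicit what the paper's two-case argument leaves implicit.
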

As $s \to \infty$ the exponents tend to those of the constant case in Theorem~\ref{thm:mainconstant} with $D = T \Delta$.
The transition between the two regimes is governed by the comparison of the drift separation $D$ with the estimation accuracy of the drifts, exactly as in the constant case.

\par\smallskip\noindent\textbf{Simulations.}\enspace
We consider four drift differences on $[0,1]$, chosen to cover the regimes of Assumption~\ref{ass:drifts} and the degenerate case of Example~\ref{ex:countermargin}.
As shown in~\eqref{eq:Chisquare} in the proof, the log-likelihood ratio is of the form $\sum_{j\ge 0} \lambda_j N_j^2$ with independent standard normal variables $N_j$ up to a shift.
The four cases differ in the sign of the weights $\lambda_j$.
Let $b_1(t) = -1 + a(t)/2$ and $b_0(t) = -1 - a(t)/2$ with
\begin{itemize}
    \item $\psi \le -c < 0$, regime $(+)$: $a(t) = 1 + t/2$. All weights $\lambda_j$ are positive.
    \item $\psi \equiv 0$, regime $(-)$: $a(t) = e^{2(t-1)}$. Only one weight is nonzero.
    \item $\psi > 0$, regime $(-)$: $a(t) = e^{3(t-1)}$. One $\lambda_0$ is positive and infinitely many are negative.
    \item Example~\ref{ex:countermargin} with $c=1/(2e)$: $\int_0^1 a(t) \diff t= 0$, $a$ discontinuous at $1/2$. Only two weights are nonzero and are of opposite sign.
\end{itemize}
In the first three Cases $D= \Vert a \Vert_{L^1}$ and $b_1, b_0$ are analytic, so that Assumption~\ref{ass:drifts} holds.
The drifts are estimated by a local-constant Euler quasi-likelihood estimator.
It is obtained by treating the drift coefficient as constant within a symmetric time window with half-bandwidth $0.01$ and maximizing the corresponding local Euler likelihood from $n$ paths per class, simulated on a grid of $1000$ time steps.
The excess risk is computed as in Section~\ref{sec:constantOU} with $30000$ test paths and $20$ training repetitions.

\begin{figure}[t]
    \centering
    \includegraphics[width=\linewidth]{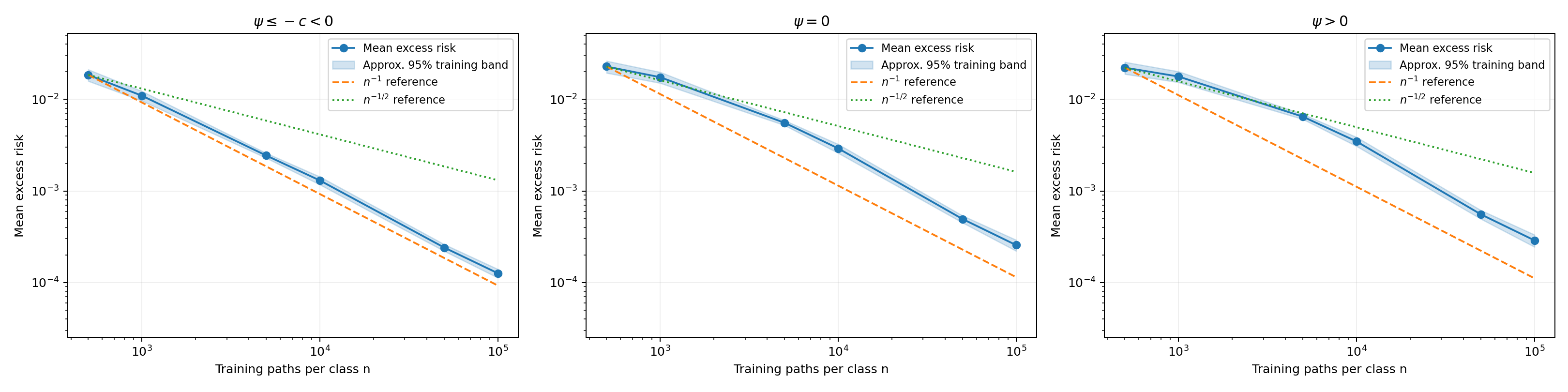}
    \caption{
    Excess risk for three time-inhomogeneous OU classification experiments:
    $\psi\le-c<0$ (left), $\psi=0$ (middle),
    $\psi>0$ (right).
    }
    \label{fig:time-inhomogeneous-comparison}
\end{figure}
In all three cases the excess risk decays at the fast rate $n^{-1}$ for large $n$.
This is faster than the error bound $v_n^2$ in Theorem~\ref{thm:maininhomo} for any finite $s$, because the drifts used here are analytic and the bias of the window estimator is negligible over the range of $n$ considered, so that the estimation error behaves parametrically.
The two cases with smaller separation ($D \approx 0.43$ in Case 2, $D \approx 0.32$ in Case 3) exhibit an initial phase of order $n^{-1/2}$ up to $n \approx 10^{3}$, followed by the $n^{-1}$ regime.
This is the transition between the two regimes of~\eqref{eq:mainrateinhomo}, which occurs when the estimation error falls below the separation.
For $D \approx 1.25$ in the first case, the fast regime is reached already at $n=500$.

\begin{figure}[t]
    \centering
    \includegraphics[width=0.8\linewidth]{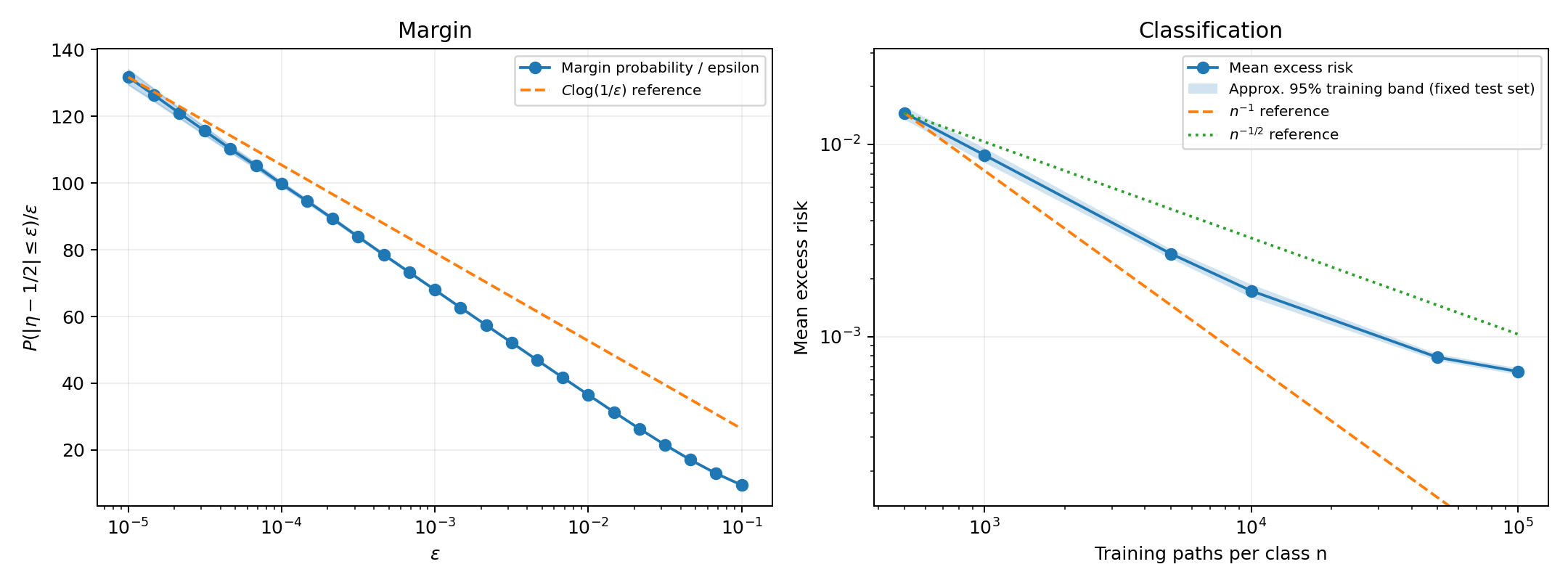}
    \small
    \caption{
    Margin behaviour and excess risk for Example~\ref{ex:countermargin} ($\int a =0$).
    Left: the empirical ratio $\prob(|\eta(X)-1/2|\le\varepsilon)/\varepsilon$.
    Right: mean excess risk.
    }
    \label{fig:countermargin}
\end{figure}
Example~\ref{ex:countermargin} lies outside Assumption~\ref{ass:drifts}.
It violates the nonnegativity, Lipschitz regularity and positive signed integral.
Figure~\ref{fig:countermargin} shows that the margin probability is at least of order $\varepsilon \log(1/\varepsilon)$, so the margin condition with $\alpha = 1$ fails.
The left panel is consistent with the growing ratio in the margin probability for the variable $Z=\lambda_+ \xi_1^2 -\lambda_- \xi_2^2$ in Lemma~\ref{lem:countermargin}.
Over the range $500 \le n \le 10^4$, the empirical excess risk follows a slope of about $-0.7$ and flattens beyond.
For $n \le 10^4$ the logarithmic factor in the margin varies strongly and degrades the rate from $n^{-1}$.
The flattening at larger $n$ is consistent with a fixed-window smoothing bias caused by the estimation term.
The window containing the discontinuity of $a$ at $1/2$ carries a bias, so the estimation error stops decreasing once its variance part falls below this bias.

\section{Proof Strategy and Key Lemmas} \label{sec-proof}
In this section we provide the proofs of all propositions, lemmas, and theorems stated in the previous sections.
Our approach relies on the Wiener chaos representation together with spectral theory.
We state the chaos and operator results in the general functional form, then specialize to the results with constant drifts where appropriate.

This section follows the order of the proof. Section~\ref{ssec:reduction} reduces the quantities of interest to random variables of the form $Z$; Section~\ref{ssec:chaos} represents $Z$ as a shifted generalized $\chi^2$ variable via the Wiener chaos expansion; Sections~\ref{ssec:factorization}--\ref{ssec:cJJ-evs} analyse the coefficients in this representation as eigenvalues of an operator; Section~\ref{ssec:density} derives the resulting distributional bounds and proves Propositions~\ref{prop:margin} and \ref{prop:margininhomo}; Section~\ref{ssec:estimation} proves the bound on the regression estimation error and Section~\ref{ssec:excess} proves the excess-risk bounds in Theorems~\ref{thm:mainconstant} and \ref{thm:maininhomo}.

\subsection{Reduction to distribution of the log-likelihood term} \label{ssec:reduction}
\par\smallskip\noindent\textbf{Margin term.}\enspace
As in earlier works on supervised classification with the Gaussian sequence model \cite{gadat2020}, we reduce the margin term via the following lemma.
\begin{lem} \label{lem:reducemargin}
	For $\varepsilon\le 1/16$, the margin probability can be bounded by the likelihood ratio via
	\begin{align*}
		& \prob_X \left( \left\vert \eta(X) - \frac{1}{2} \right\vert \le \varepsilon \right) \le \frac{1}{2} M_1 + \frac{1}{2} M_0,
	\end{align*}
	with
	\begin{align}
	M_1&:=\prob_{b_1}\!\left(\Bigl|\!\int_0^T\! a(t)X_t\diff W_t
		+\frac{1}{2}\!\int_0^T\! a^2(t)X_t^2\diff t\Bigr|\le 16\varepsilon
		\right),\label{eq:M1}\\
	M_0&:=\prob_{b_0}\!\left(\Bigl|\!\int_0^T\! a(t)X_t\diff W_t
		-\frac{1}{2}\!\int_0^T\! a^2 (t)X_t^2\diff t\Bigr|\le 16\varepsilon
		\right).\label{eq:M0}
	\end{align}
\end{lem}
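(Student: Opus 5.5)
We plan to convert the event on $\eta$ into an event on the log-likelihood ratio, then rewrite $\ell$ under each class in terms of the driving Brownian motion.

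\textbf{Step 1: from $\eta$ to $\ell$.} Since $\eta=(1+e^{-\ell})^{-1}$, the event $\{|\eta-\tfrac12|\le\varepsilon\}$ is $\{|\ell|\le \log\frac{1+2\varepsilon}{1-2\varepsilon}\}$. Put $g(\varepsilon)=\log\frac{1+2\varepsilon}{1-2\varepsilon}$. By concavity of $\log$, $\log(1+2\varepsilon)\le 2\varepsilon$. Also $-\log(1-2\varepsilon)\le 2\varepsilon/(1-2\varepsilon)\le \tfrac{16}{7}\varepsilon$ for $\varepsilon\le 1/16$. Hence $g(\varepsilon)\le 5\varepsilon\le 16\varepsilon$, and the constant $16$ leaves ample room. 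This gives
\[
\prob_X\bigl(|\eta(X)-\tfrac12|\le\varepsilon\bigr)\le \prob_X\bigl(|\ell(X;b_1,b_0)|\le 16\varepsilon\bigr).
\]

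\textbf{Step 2: condition on the label.} Since $Y\sim\mathrm{Ber}(1/2)$, the marginal law is $\prob_X=\tfrac12\prob_{b_1}+\tfrac12\prob_{b_0}$. It therefore suffices to identify the law of $\ell$ under each $\prob_{b_i}$.

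\textbf{Step 3: rewrite $\ell$ under each class.} Under $Y=1$, substitute $\diff X_t=b_1(t)X_t\diff t+\diff W_t$ into \eqref{eq:llr}. This gives
\[
\ell=\int_0^T aX_t\diff W_t+\int_0^T\Bigl(ab_1-\tfrac12(b_1^2-b_0^2)\Bigr)X_t^2\diff t .
\]
Using $b_1^2-b_0^2=a(b_1+b_0)$, the coefficient is $ab_1-\tfrac12 a(b_1+b_0)=\tfrac12 a(b_1-b_0)=\tfrac12a^2$, which is exactly the functional in $M_1$. Under $Y=0$, substitute $\diff X_t=b_0X_t\diff t+\diff W_t$. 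The coefficient becomes $ab_0-\tfrac12a(b_1+b_0)=-\tfrac12a^2$, which gives $M_0$. Here $W$ is the Brownian motion driving $X$ under the respective class, consistent with the convention stated after \eqref{eq:Z}.

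The stochastic integrals are well defined since $a\in L^2$ and $X$ is continuous, and they coincide with the pathwise expression of \eqref{eq:girsanov}. The only point requiring care is the elementary bound on $g$ in Step 1; the rest is routine substitution.
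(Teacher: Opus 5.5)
Your proof is correct and follows the same structure as the paper's: turn the $\eta$-event into a small-ball event for $\ell$, write $\prob_X$ as the equal mixture of the two class-conditional laws, and substitute $\diff X_t=b_i(t)X_t\diff t+\diff W_t$ to obtain the coefficients $\pm\tfrac12 a^2$. The only difference is in the first step. The paper goes through $|p_{b_1}/p_{b_0}-1|\le 8\varepsilon$ and then Lemma~\ref{lem:exp}, which is where the constant $16$ comes from. Your exact identity $\{|\eta-\tfrac12|\le\varepsilon\}=\{|\ell|\le\log\frac{1+2\varepsilon}{1-2\varepsilon}\}$ is more direct and yields the sharper threshold $5\varepsilon$.
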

\begin{proof}
We have
\begin{align*}
	& \prob_X\!\left(\Bigl|\eta(X)-\frac12\Bigr|\le\varepsilon\right) = \prob_X \left( \frac{\left\vert p_{b_1}(X)-p_{b_0}(X)\right\vert}{2(p_{b_1}(X) + p_{b_0} (X))} \le \varepsilon \right)
	\le \prob_X \left( \left\vert \frac{p_{b_1}(X)}{p_{b_0}(X)} - 1 \right\vert \le 8\varepsilon \right).
\end{align*}
The last inequality holds, since we have $\vert x -1 \vert \le 4\varepsilon \; / \; (1-2\varepsilon) \le 8\varepsilon$ from $\vert x -1 \vert \; / \; 2(x+1)\le \varepsilon$ for $\varepsilon \le 1/4$.

Conditionally on $Y=i$ we have $\diff X_t=b_i(t)X_t\diff t+\diff W_t$, and substituting this into the Girsanov density \eqref{eq:girsanov} gives
\begin{equation}\label{eq:logratio-margin}
	\log\frac{p_{b_1}}{p_{b_0}}(X)
	=\int_0^T a(t)X_t\diff W_t
	+\int_0^T a(t)\Bigl(b_i-\frac{b_1+b_0}{2}\Bigr)(t) X_t^2\diff t
	=\int_0^T a(t)X_t\diff W_t+\frac{\sigma_i}{2}\int_0^T a^2(t)X_t^2\diff t
\end{equation}
with $\sigma_1=+1$ and $\sigma_0=-1$.
Thus, conditioning on $Y$ and applying Lemma~\ref{lem:exp} for the exponential function with $8\varepsilon$ in place of $\varepsilon$, we obtain
\begin{equation*}
\prob_X\!\left(\Bigl|\eta(X)-\frac12\Bigr|\le\varepsilon\right)
\le\frac12 M_1+\frac12 M_0. \qedhere
\end{equation*}
\end{proof}

	\par\smallskip\noindent\textbf{Estimation term.}\enspace
	To control the estimation error, we apply the following lemma.
	\begin{lem} \label{lem:reduceest}
		The estimation error is bounded via
		\begin{align*}
			\prob_{\Samp_n,X} \left(\vert \widehat{\eta}(X)-\eta(X) \vert > \varepsilon  \right) \le \frac12 E_{11} + \frac12 E_{10} + \frac12 E_{01} + \frac12 E_{00},
		\end{align*}
		with
		\begin{align}
		E_{jj}&:=\prob_{\Samp_n, b_j}\!\left(\Bigl|\!\int_0^T\! (\hat b_j-b_j)(t)X_t\diff W_t
		-\frac12\!\int_0^T\! (\hat b_j-b_j)^2(t)X_t^2\diff t\Bigr|>\frac{\varepsilon}{6}\right),\label{eq:E1}\\
		E_{ij}&:=\prob_{\Samp_n, b_j}\!\left(\Bigl|\!\int_0^T\! (\hat b_i-b_i)(t) X_t\diff W_t
		+ \!\int_0^T\! \bigl(-\frac12(\hat b_i-b_i)^2+(b_j-b_i)(\hat b_i-b_i)\bigr) (t)X_t^2\diff t\Bigr|>\frac{\varepsilon}{6} \right), \; i \neq j. \label{eq:E0}
		\end{align}
	\end{lem}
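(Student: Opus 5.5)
The plan is to follow the template of Lemma~\ref{lem:reducemargin}: first pass from $\eta$ to log-likelihood ratios, then condition on the label of the test path. Write $\hat\ell := \ell(X;\hat b_1,\hat b_0)$ and $\ell := \ell(X;b_1,b_0)$. The map $x\mapsto (1+e^{-x})^{-1}$ is $1/4$-Lipschitz, so
\[
|\widehat\eta(X)-\eta(X)| \le \tfrac14|\hat\ell-\ell|.
\]
Hence $\{|\widehat\eta-\eta|>\varepsilon\}\subset\{|\hat\ell-\ell|>4\varepsilon\}$, which is even better than the threshold $\varepsilon/6$ needed. If one prefers to mimic the multiplicative route of the margin lemma through Lemma~\ref{lem:exp}, one obtains $\{|\hat\ell-\ell|>c\varepsilon\}$ for some $c\ge 1/3$; either way it suffices to control $|\hat\ell-\ell|>\varepsilon/3$. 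Next decompose
\[
\hat\ell-\ell = \log\frac{p_{\hat b_1}}{p_{b_1}}(X)-\log\frac{p_{\hat b_0}}{p_{b_0}}(X).
\]
If the difference exceeds $\varepsilon/3$, then one of the two summands exceeds $\varepsilon/6$ in absolute value. A union bound therefore reduces the problem to the events $\{|\log(p_{\hat b_i}/p_{b_i})(X)|>\varepsilon/6\}$ for $i\in\{0,1\}$.

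Then I would condition on $Y=j$; since $Y\sim\mathrm{Ber}(1/2)$, each conditional probability carries weight $1/2$. This gives four terms indexed by $(i,j)$, each weighted $\tfrac12$, matching the statement. The estimators are built from $\Samp_n$, which is independent of $(X,Y)$, so it is legitimate to treat $\hat b_i$ as deterministic inside the Girsanov formula. Under $Y=j$ we substitute $\diff X_t=b_j X_t\diff t+\diff W_t$ into \eqref{eq:girsanov}, writing $h=\hat b_i-b_i$:
\begin{align*}
\log\frac{p_{\hat b_i}}{p_{b_i}}(X)
&=\int_0^T h\,X_t\,(b_j X_t\diff t+\diff W_t)-\frac12\int_0^T(\hat b_i^2-b_i^2)X_t^2\diff t\\
&=\int_0^T h X_t\diff W_t+\int_0^T\Bigl(hb_j-hb_i-\frac12h^2\Bigr)X_t^2\diff t,
\end{align*}
where the second line uses $\hat b_i^2-b_i^2=2hb_i+h^2$. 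For $i=j$ this is $\int hX\diff W-\tfrac12\int h^2X^2\diff t$, which is \eqref{eq:E1}. For $i\ne j$ it equals \eqref{eq:E0}, with the drift-difference term $(b_j-b_i)h$.

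The only delicate point is the measurability and justification of applying Girsanov with random plug-in drifts. This is handled by conditioning on $\Samp_n$, which requires $\hat b_i\in L^2$ almost surely, and then integrating out. The Lipschitz step and the union bound are routine. The constants are then checked against the thresholds $\varepsilon/6$.
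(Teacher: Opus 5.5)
Your proof is correct, but it reaches the four terms by a different route than the paper.

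\textbf{The paper's route.} The paper stays on the multiplicative scale. It invokes the ratio inequality $|\widehat\eta-\eta|\le 2|p_{\hat b_1}/p_{b_1}-1|+|p_{\hat b_0}/p_{b_0}-1|$ (cited from Gadat et al.) and splits the threshold so that each ratio must exceed $\varepsilon/3$. It then conditions on $Y$. Only at the end does it pass to logarithms, via the second inequality of Lemma~\ref{lem:exp} applied with $\varepsilon/3$ in place of $\varepsilon$. That step produces the threshold $\varepsilon/6$ and forces the restriction $\varepsilon\le 3/4$.

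\textbf{Your route.} You work on the log scale from the start. You combine the $\tfrac14$-Lipschitz property of $x\mapsto(1+e^{-x})^{-1}$ with the exact identity $\hat\ell-\ell=\log(p_{\hat b_1}/p_{b_1})-\log(p_{\hat b_0}/p_{b_0})$. This gives the inclusion of $\{|\widehat\eta-\eta|>\varepsilon\}$ into $\{|\log(p_{\hat b_1}/p_{b_1})|>2\varepsilon\}\cup\{|\log(p_{\hat b_0}/p_{b_0})|>2\varepsilon\}$. That inclusion is stronger than what the lemma asserts and holds for every $\varepsilon>0$. It needs neither the external ratio inequality nor Lemma~\ref{lem:exp}. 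Your conditioning on $Y$ with weights $\tfrac12$ and your Girsanov computation under $Y=j$ coincide with the paper's. You also state the independence of $\Samp_n$ and $(X,Y)$ explicitly, which the paper leaves implicit.

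\textbf{Comparison.} The paper's version mainly buys formal parallelism with the margin reduction in Lemma~\ref{lem:reducemargin} and with the cited literature. Yours is cleaner and loses fewer constants. It would even let Lemma~\ref{lem:est-general} work with threshold $2\varepsilon$ instead of $\varepsilon/6$ and without the condition $\varepsilon\le 3/4$.

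Your aside that the multiplicative route yields $\{|\hat\ell-\ell|>c\varepsilon\}$ for ``some $c\ge 1/3$'' is vague and not needed; you can simply drop it.
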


	\begin{proof}
		We first use the likelihood ratio bound (see A.3 in \cite{gadat2020})
	\begin{align*}
		\big\vert \widehat{\eta}(X)-\eta(X) \big\vert \le 2 \left\vert \frac{p_{\hat{b}_1}(X)}{p_{b_1}(X)} -1\right\vert + \left\vert \frac{p_{\hat{b}_0}(X)}{p_{b_0}(X)} -1\right\vert,
	\end{align*}
	and the inclusion $\{2Z_1 + Z_2 > \varepsilon\} \subseteq \{2Z_1  > 2\varepsilon / 3 \} \cup \{ Z_2 > \varepsilon / 3\}$ for any random $Z_1, Z_2$. We obtain
	\begin{align*}
		\prob_{\Samp_n,X} \left(\vert \widehat{\eta}(X)-\eta(X) \vert > \varepsilon  \right) \le \prob_{\Samp_n,X} \left( \left\vert \frac{p_{\hat{b}_1}(X)}{p_{b_1}(X)} -1\right\vert > \frac{\varepsilon}{3}  \right) + \prob_{\Samp_n,X} \left( \left\vert \frac{p_{\hat{b}_0}(X)}{p_{b_0}(X)} -1\right\vert > \frac{\varepsilon}{3} \right).
	\end{align*}
Conditionally on $Y$,
\begin{align*}
	& \prob_{\Samp_n,X} \left( \left\vert \frac{p_{\hat{b}_1}(X)}{p_{b_1}(X)} -1\right\vert > \frac{\varepsilon}{3} \right) = \frac12 \prob_{\Samp_n,b_1} \left( \left\vert \frac{p_{\hat{b}_1}(X)}{p_{b_1}(X)} -1\right\vert > \frac{\varepsilon}{3} \right)
	+ \frac12 \prob_{\Samp_n,b_0} \left( \left\vert \frac{p_{\hat{b}_1}(X)}{p_{b_1}(X)} -1\right\vert > \frac{\varepsilon}{3} \right)\\
	& \prob_{\Samp_n,X} \left( \left\vert \frac{p_{\hat{b}_0}(X)}{p_{b_0}(X)} -1\right\vert > \frac{\varepsilon}{3} \right) = \frac12 \prob_{\Samp_n,b_1} \left( \left\vert \frac{p_{\hat{b}_0}(X)}{p_{b_0}(X)} -1\right\vert > \frac{\varepsilon}{3} \right)
	+ \frac12 \prob_{\Samp_n,b_0} \left( \left\vert \frac{p_{\hat{b}_0}(X)}{p_{b_0}(X)} -1\right\vert > \frac{\varepsilon}{3} \right).
\end{align*}
Using the log-likelihood function given $Y=j$
\begin{equation}\label{eq:logratio-general}
	\log\frac{p_{\hat{b}_i}}{p_{b_i}}(X)
	=\int_0^T \bigl(\hat{b}_i - b_i\bigr)(t)X_t\diff W_t
	+\int_0^T \Bigl(-\frac12 \bigl(\hat{b}_i - b_i\bigr)^2 + (b_j-b_i)\bigl(\hat{b}_i - b_i\bigr)
	\Bigr)(t)X_t^2\diff t,
\end{equation}
and applying Lemma~\ref{lem:exp} gives
\begin{align}
  \prob_{\Samp_n,b_j} \left( \left\vert \frac{p_{\hat{b}_i}(X)}{p_{b_i}(X)} -1\right\vert > \frac{\varepsilon}{3} \right) \le E_{ij}
\end{align}
for $0 < \varepsilon \le 3/4$ as upper bounds for the ratio probabilities.
\end{proof}
	Henceforth, we write $E_1$ to denote $E_{11}$ and $E_0$ to denote $E_{01}$, simplifying the expressions in the case $Y=1$. The terms under $Y=0$ follow analogously by symmetry.

The risk reduces to the four terms $M_1, M_0, E_1 (=E_{11}), E_0(=E_{01})$ in \eqref{eq:M1}, \eqref{eq:M0}, \eqref{eq:E1} and \eqref{eq:E0}. Inspecting their structure, we see that they are small-ball estimates or tail bounds for a stochastic quantity of the form in \eqref{eq:Z}
\begin{align*}
	Z =  \int_0^T u(t) X_t \diff W_t + \frac{1}{2}\int_0^T v(t) X_t^2 \diff t,
\end{align*}
given that $X_t$ is an Ornstein--Uhlenbeck process with drift function $b_i(t)$ and that $u \in L^2([0,T])$ and $v \in L^1([0,T])$ are deterministic given $Y$ and $\Samp_n$.

\subsection{Wiener chaos decomposition} \label{ssec:chaos}
Suppose $(X_t)_{t\in [0,T]}$ is an Ornstein--Uhlenbeck process with drift function $b_i (t) \in L^2([0,T])$, and $u \in L^2([0,T])$, $v  \in L^1([0,T])$ are deterministic coefficients that, together with $b_i$, are not simultaneously zero constant functions.
We briefly recall the elements of the Wiener chaos decomposition and the Malliavin derivative used below following the results of Nualart (2006) \cite{nualart2006} and Nourdin \& Peccati (2012) \cite{nourdin2012}. Every $Z \in L^2(\Omega, \sigma(W), \prob)$ admits the orthogonal expansion $Z = \sum_{q \ge 0}I_q(f_q)$ into multiple Wiener-It\^o integrals; see Proposition~\ref{prop-chaos}. The Malliavin derivative $D_t$ (which appears only with time subscripts) and its domain $\mathbb{D}^{1,2}$ are defined as in Nualart (2006) \cite{nualart2006}.

	The random variable $Z$ defined in \eqref{eq:Z} lies in $ L^2(\Omega)$ and has the Malliavin derivatives
	\begin{align}
		D_{t} Z= & \, u(t) X_t + \int_t^T u(r)\, D_t X_r \diff W_r + \int_t^T v(r)\, X_r \, D_t X_r \diff r \nonumber\\
		= & \, u(t) \int_0^t \exp \left(\int_\rho^t b_i(\tau) \diff \tau \right) \diff W_\rho + \int_t^T u(r) \exp \left( \int_t^r b_i (\tau) \diff \tau \right)   \diff W_r \nonumber\\
		& + \int_t^T v(r) \exp \left( \int_t^r b_i (\tau) \diff \tau \right)  \int_0^r \exp \left(\int_\rho^r b_i (\tau) \diff \tau \right) \diff W_\rho \diff r, \label{eq:DZ}\\
		D_{s} D_{t} Z
		= & \; u(t \vee s) \exp \left( \int_{t\wedge s}^{t \vee s} b_i (\tau) \diff \tau \right) + \int_{t \vee s}^T v(r) \exp \left( \int_s^r b_i (\tau) \diff \tau \right) \exp \left( \int_t^r b_i (\tau) \diff \tau \right) \diff r \label{eq:DDZ}
	\end{align}
	for $s,t \in [0,T]$. Here $D^k Z= 0$ for $k \ge 3$, since $D^2 Z$ is deterministic; thus $Z-\E[Z]$ lies in the second chaos.
	Following Proposition~\ref{prop-chaos} with $\E[D_t Z] = 0$,
	\begin{align*}
		Z = \E [Z] + \int_0^T \int_0^{t} \E [D_s D_t Z] \diff W_{s} \diff W_{t}.
	\end{align*}

	The symmetric kernel $\E [D_s D_t Z] \in L^2([0,T]^2)$ defines a compact, self-adjoint Hilbert--Schmidt operator on $L^2([0,T])$
	\begin{align} \label{eqHS}
		A g (t) = \int_0^T \E [D_s D_t Z] g(s) \diff s.
	\end{align}
	We write $\setl \lambda_j: \; j \ge 0 \setr$ for the eigenvalues of $A$ (counted with multiplicity, $\vert \lambda_0 \vert \ge \vert \lambda_1 \vert \ge \vert \lambda_2 \vert \ge \cdots$) and $\setl e_j: \; j \ge 0 \setr$ for the corresponding orthonormal eigenfunctions in $L^2([0,T])$.
	$\E [D_s D_t Z]$ admits the expansion
	\begin{align*}
		\E [D_s D_t Z] = \sum_{j = 0}^{\infty} \lambda_j \; e_j (t) \cdot e_j(s) \quad\text{in }L^2([0,T]^2),
		\qquad
		\Vert\lambda\Vert_2^2=\sum_{j=0}^\infty\lambda_j^2=\Vert \E[D_s D_t Z]\Vert^2_{L^2([0,T]^2)}<\infty.
	\end{align*}
	The double stochastic integral of $\E [D_s D_t Z]$ is defined by
	\begin{align*}
		\int_0^T\int_0^t \E [D_s D_t Z] \diff W_s \diff W_t
		= \sum_{j = 0}^{\infty} \lambda_j \int_0^T\int_0^t e_j (t) \cdot e_j(s) \diff W_s \diff W_t
	\end{align*}
	Applying It\^o's lemma to $\int_0^t e_j(s) \diff W_s$ at time $t=T$, we obtain
	\begin{align*}
		\left( \int_0^T e_j(s) \diff W_s \right)^2 = 2 \int_0^T \int_0^t e_j(s) \diff W_s e_j(t) \diff W_t + 1.
	\end{align*}
	$N_j := \int_0^T e_j(s) \diff W_s$, $j \ge 0$ are independent standard normal variables in $\R$. Thus,
	\begin{align} \label{eq:Chisquare}
		Z &
		= \E [Z] + \frac{1}{2} \sum_{j = 0}^{\infty} \lambda_j \left( N_j^2 - 1\right),
		\qquad \var(Z)=\frac12\sum_{j=0}^\infty\lambda_j^2 ,
	\end{align}
	with the series converging in $L^2$.
	$Z$ is thus a quadratic form of Gaussian random variables in $L^2$ with shifted expectation. The density function of $Z$ exists if at least one $\lambda_j \neq 0$. However, it is difficult to obtain a closed expression for the density function if $\lambda_j \neq 0$ for more than two $j$'s. The eigenvalues are indexed starting from 0, since $\lambda_0$ will be used to denote the leading eigenvalue in the next sections.

	\begin{rem}
		For any Gaussian $X$, all higher-order ($k \ge 3$) Malliavin derivatives of $Z$ vanish. Therefore, $Z$ can always be represented through the second-order Wiener chaos and admits an expansion in terms of generalized chi-squared variables as in \eqref{eq:Chisquare}. This observation implies that the subsequent analysis extends to more general Gaussian processes,
		not only to the Ornstein--Uhlenbeck case.
	\end{rem}
	\begin{rem} \label{rem:malliavin}
	The classical Malliavin density formula in Nualart (2006) \cite{nualart2006} requires $Z \in \mathbb{D}^{1,2}$ with $\Vert DZ\Vert_H > 0$ a.s., together with the integrability condition
	\begin{equation*}
	\frac{DZ}{\Vert DZ\Vert_H^2} \in \mathrm{Dom}(\delta) \subset L^2(\Omega; H),
	\end{equation*}
	so that the divergence $\delta\!\bigl(DZ/\Vert DZ\Vert_H^2\bigr)$ is well defined and the density admits the Nualart representation (and the associated upper bound).
	This condition fails already in constant case with $b_1+b_0 = 0$.
	We first carry out the computation for $X=W$.
	\begin{equation*}
		Z = \int_0^T W_s \diff W_s = \frac12\bigl(W_T^2 - T\bigr).
	\end{equation*}
	Then
	\begin{equation*}
		D_t Z = W_t + \int_0^T D_t W_s \diff W_s = W_t + \int_0^T \indic (t \le s) \diff W_s = W_T \indic (0 \le t \le T).
	\end{equation*}
	Thus,
	\begin{equation*}
		\Vert DZ\Vert_H^2 = \int_0^T W_T^2 \diff t = T\,W_T^2,\qquad
	\frac{D_t Z}{\Vert DZ\Vert_H^2} = \frac{1}{T\,W_T}
	\end{equation*}
	is not integrable. Hence the classical density upper bound is unavailable, even though $Z$ itself possesses a smooth density on $(-T/2, \infty)$.
	The same computation applies to the log-likelihood ratio of two constant drifts with $b_1+b_0=0$, since $\ell = \frac{\Delta}{2}(X_T^2 -T)$ and $X_T$ is a nondegenerate centred Gaussian under $\prob_{b_i}$.
	The same holds for our random variable in \eqref{eq:Z} whenever the operator has rank at most 2.
	\end{rem}

\subsection{Factorization of the operator $A$} \label{ssec:factorization}
We first analyse the structure of the eigenvalue spectrum of $A$. Recall the notations in Section~\ref{sec:setting}.

\par\smallskip\noindent\textbf{Notations.}\enspace Throughout the eigenvalue analysis we work conditionally on a fixed label $Y=i\in\{0,1\}$. We write
\begin{itemize}
\item $\omega(t) := \omega_i (t) = \exp\!\bigl(-\int_0^t b_i(\tau)\,\diff\tau\bigr)$, with bounds $c_\omega \le \omega(t)\le C_\omega$;
\item for the margin terms: $u = a = b_1 - b_0$ and $v = \sigma_i a^2$ with $\sigma_1 = +1$, $\sigma_0 = -1$;
\item the kernel multiplicative function
\begin{equation*}
	F(r) : = u(r) \omega(r)^{-2} + \int_r^T v(\rho) \omega(\rho)^{-2} \diff \rho.
\end{equation*}
When $u$ is absolutely continuous and $v\in L^1$, $F$ is absolutely continuous with
\begin{equation}\label{eq:Fprime}
	F'(r)=\bigl(u'(r)+2b_i(r)u(r)-v(r)\bigr)\,\omega(r)^{-2},
	\qquad\text{for a.e.\ } r\in[0,T].
\end{equation}
\end{itemize}
For the margin specialization $(u,v)=(a,\sigma_ia^2)$ we obtain from \eqref{eq:Fprime}
\begin{equation}\label{eq:Fprime-margin}
	F'(r)=\bigl(a'+a\,(2b_i-\sigma_i a)\bigr)(r)\,\omega(r)^{-2}
	=\bigl(a'+ a\,(b_0+b_1)\bigr)(r)\,\omega(r)^{-2}
	=\psi(r)\,\omega(r)^{-2}.
\end{equation}
The label $i$ and the sign $\sigma_i$ cancel.
The Volterra multiplier $-F'=-\psi\,\omega_i^{-2}$ is driven by the same function $\psi$ from \eqref{eq:psi} for both classes, and its sign pattern is that of $-\psi$.
This identifies the assumption-level function $\psi$ of Assumption~\ref{ass:drifts} with the object $F'\omega^2$.

\begin{lem}\label{lem:Afactorization}
	Define the Volterra operator on $L^2([0,T])$ by
	\begin{equation*}
		Jg(t) = \int_0^t g(s) \diff s
	\end{equation*}
	and the multiplication operator for function $h$ as
	\begin{equation*}
		M_h g (t)= h(t) g(t).
	\end{equation*}
	$A$ can be decomposed as
	\begin{equation} \label{eq:Adecomp}
	A = M_{\omega} \left(R+J^* M_{-F'} J \right) M_{\omega},
	\end{equation}
	where $Rg = F(T) \int_0^T g(s) \diff s \in \R$ is a rank-one operator when $a(T) > 0$.
\end{lem}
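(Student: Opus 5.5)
The plan is to compute the kernel $K(s,t):=\E[D_sD_tZ]$ explicitly from \eqref{eq:DDZ}, which is already deterministic, and match it with the kernel of the right-hand side of \eqref{eq:Adecomp}. Put $\omega=\omega_i$. Since $\exp(\int_s^t b_i)=\omega(s)/\omega(t)$, the first term of \eqref{eq:DDZ} becomes
\[
u(t\vee s)\,\frac{\omega(t\wedge s)}{\omega(t\vee s)}
= \omega(s)\omega(t)\,u(t\vee s)\,\omega(t\vee s)^{-2}.
\]
The second term is $\int_{t\vee s}^T v(r)\,\omega(s)\omega(t)\,\omega(r)^{-2}\diff r$. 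Adding the two gives
\[
K(s,t)=\omega(s)\omega(t)\,F(t\vee s),
\]
with $F$ as defined above. So $A=M_\omega \tilde A M_\omega$, where $\tilde A$ has kernel $F(s\vee t)$. This step is routine.

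Next I identify the kernel of $R+J^*M_{-F'}J$. The kernel of $J$ is $\indic(s\le r)$, and $J^*h(t)=\int_t^T h(r)\diff r$. Hence
\[
J^*M_{-F'}J g(t)=\int_t^T\bigl(-F'(r)\bigr)\int_0^r g(s)\diff s\,\diff r=\int_0^T g(s)\int_{s\vee t}^T\bigl(-F'(r)\bigr)\diff r\,\diff s ,
\]
so its kernel is $-\int_{s\vee t}^T F'(r)\diff r = F(s\vee t)-F(T)$. This uses the fundamental theorem of calculus for the absolutely continuous $F$, which holds when $u$ is absolutely continuous and $v\in L^1$, by \eqref{eq:Fprime}. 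The rank-one operator $R$ contributes the constant kernel $F(T)$. The sum is therefore exactly $F(s\vee t)$, which proves \eqref{eq:Adecomp}.

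It remains to interpret $R$. We have $F(T)=u(T)\omega(T)^{-2}$, since the integral term vanishes at $r=T$. In the margin specialization $u=a$, this gives $F(T)=a(T)\omega(T)^{-2}$, which is nonzero exactly when $a(T)\ne0$, i.e.\ $a(T)>0$ under Assumption~\ref{ass:drifts}. In that case $R=F(T)\,\mathbf 1\otimes\mathbf 1$ has rank one; otherwise it vanishes.

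The only point needing care is regularity. I need $F$ absolutely continuous so that $F(s\vee t)-F(T)=-\int_{s\vee t}^TF'$, and I need the Fubini interchange to be justified. Both follow from $u$ Lipschitz (or absolutely continuous), $v\in L^1$, $b_i\in L^2$ and the boundedness of $\omega,\omega^{-1}$ from \eqref{eq:comega}. For general $u\in L^2$ the identity should be read with $F(s\vee t)$ directly, i.e.\ the Volterra part is defined only when $F'$ exists.
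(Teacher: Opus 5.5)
Your proposal is correct and takes the same route as the paper. Both rewrite the deterministic kernel in \eqref{eq:DDZ} as $\omega(s)\omega(t)F(s\vee t)$ and split $F(s\vee t)=F(T)-\int_{s\vee t}^T F'(r)\diff r$ into the constant part, which gives $R$, and the Volterra part $J^*M_{-F'}J$; your explicit kernel computation for $J^*M_{-F'}J$ and your remarks on the absolute continuity of $F$ and on Fubini spell out steps the paper leaves implicit.
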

\begin{proof}
	Using the notations above, we have the reformulation
\begin{equation*}
    \begin{aligned}
    Ag(t) & = \int_0^T \left[ u(s\vee t) \exp \left(\int_{s \wedge t}^{s \vee t} b_i(\tau) \diff \tau \right) + \int_{s\vee t}^T v(\rho) \exp \left(\int_s^\rho b_i(\tau) \diff \tau + \int_t^\rho b_i(\tau) \diff \tau\right) \diff \rho\right] g(s) \diff s \\
    & = \omega(t) \int_0^T F(s \vee t)\, \omega (s)\, g(s) \diff s \\
	& = \omega (t) \int_0^T \left(F(T) - \int_{s \vee t}^T F'(r) \diff r \right) \omega (s) g(s) \diff s ,
    \end{aligned}
\end{equation*}
where $F$ is as declared above.
If $a(T) \neq 0$, then $R$ is a rank-one operator; otherwise, $R = 0$.
With these conventions, we obtain the stated decomposition in \eqref{eq:Adecomp}.
\end{proof}

\begin{rem}
$R$ is an at most rank-one operator which describes the boundary condition.
It is self-adjoint, since for any $h \in L^2([0,T])$,
\begin{equation*}
    \langle Rg, h \rangle = \int_0^T F(T) \int_0^T g(s) \diff s \; h(t) \diff t = \int_0^T g(s) \int_0^T F(T)  h(t) \diff t \diff s = \langle g, Rh \rangle.
\end{equation*}
Also, $R$ is positive semidefinite if $F(T) \ge 0$; if $F(T) > 0$, its only non-zero eigenvalue is $T F(T)$, with the constant eigenfunction $g \equiv 1 / \sqrt{T}$.
This is guaranteed by $F(T) = a(T)\, \omega(T)^{-2} \ge 0$ in Assumption~\ref{ass:drifts}.

In the sequel, we analyse the eigenvalues of $A$ for $a(T) >0$. If $F(T) = 0$, then $R=0$ and only the Volterra part in \eqref{eq:Adecomp} survives. The bounds below remain valid in this degenerate case: In regime~$(+)$ the bounds for $j \ge 1$ and the lower bound in Corollary~\ref{cor:posregime} below are unchanged, and in regime~$(-)$ the operator $K=-V$ has no positive eigenvalue as in Lemma~\ref{lem:negspec}.
\end{rem}

\begin{rem} \label{rem:itocorrection}
	It\^o's formula gives
	\begin{align*}
		\diff \left(\frac{X_t^2}{2}\right) = X_t \diff X_t + \frac{1}{2} \diff t = b_i(t) X_t^2 \diff t + X_t \diff W_t + \frac{1}{2} \diff t.
	\end{align*}
	Multiplying by $a(t)$, rearranging the terms, integrating on $[0,T]$ give
	\begin{align*}
		\int_0^T a(t) X_t \diff W_t = \int_0^T a(t) \diff \left(\frac{X_t^2}{2}\right) - \int_0^T a(t) b_i(t) X_t^2 \diff t - \frac{1}{2} \int_0^T a(t) \diff t.
	\end{align*}
	Integrating by parts in the pathwise integral with weak derivative $a'$ and adding the Lebesgue term yield
	\begin{align} \label{eq:Zito}
		Z = \underbrace{\frac{a(T)}{2} X_T^2 - \frac{1}{2} \int_0^T \bigl(a' + 2a b_i -\sigma_i a^2 \bigr) (t) X_t^2 \diff t}_{=: Q} - \frac{1}{2} \int_0^T a(t) \diff t.
	\end{align}
	$Q$ is a purely quadratic functional of the centred Gaussian path $X$ with no linear or constant parts. We get $Q(X) \overset{d}{=} \frac{1}{2} \sum_j \lambda_j N_j^2$. The second term in $Q$ has the same prefactor integrand $a' + a(2b_i - \sigma_i a)$ as in $F'$. The deterministic integral $-\frac{1}{2}\int_0^T a(t) \diff t$ corresponds exactly to the It\^o correction and is exactly the shift $D$ discussed in Lemma~\ref{lem:shift} in Section~\ref{ssec:density}.
\end{rem}

Let $\lambda_k(B)$ denote the ordered eigenvalues of a Hilbert--Schmidt operator $B$, $ | \lambda_1(B) | \ge | \lambda_2(B) | \ge \cdots$.
For the indefinite operator $K := R + J^* M_{-F'} J$, we compare the spectrum of $K$ with that of $A = M_{\omega} K M_{\omega}$.
The comparison follows by applying Lemma~\ref{lem:congruence} in Appendix~\ref{sec:appendix} with $H = L^2([0,T])$, $S = M_\omega$ and $K = R + J^* M_{-F'} J$. The function $\omega$ is continuous and strictly positive on $[0,T]$, so $0 < c_\omega \le \omega \le C_\omega < \infty$ and
\begin{equation*}
	c_\omega \Vert g \Vert\le \Vert M_\omega g \Vert\le C_\omega \Vert g \Vert, \qquad g \in L^2([0,T]),
\end{equation*}
with $M_\omega$ self-adjoint and $M_\omega^{-1} = M_{1/\omega}$. Moreover $R + J^* M_{-F'} J$ is compact and self-adjoint, since $R$ is self-adjoint of rank one and $J$ is Hilbert--Schmidt while $M_{-F'}$ is bounded and self-adjoint under Assumption~\ref{ass:drifts}. Lemma~\ref{lem:congruence} (iii) yields
\begin{equation}\label{eq:omega-sandwich}
	c_{\omega}^2\, |\lambda_k(K) | \le | \lambda_k(A) | \le C_{\omega}^2\, |\lambda_k(K) |,\qquad k\ge0,
\end{equation}
in magnitude ordering, and the congruence by $M_\omega$ also preserves the numbers of positive and negative eigenvalues.

\begin{lem} \label{lem:JMJ}
If $0<c_F\le-F'\le C_F$ on $[0,T]$, then for every $k\ge1$,
\begin{equation*}
  c_F\,\lambda_k(J^*J)\le\lambda_k(J^*M_{-F'}J)\le C_F\,\lambda_k(J^*J),
\end{equation*}
and thus,
\begin{equation}\label{eq:RJMJ}
  \lambda_k(R + c_F J^*J)\le\lambda_k(R+J^*M_{-F'}J)
   \le\lambda_k(R + C_F J^*J).
\end{equation}
\end{lem}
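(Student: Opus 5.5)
The plan is to argue through the quadratic forms and the Courant--Fischer min-max principle, since all operators involved are compact and self-adjoint on $H=L^2([0,T])$. For $g\in H$ we have
\begin{equation*}
\langle J^*M_{-F'}Jg,g\rangle=\int_0^T(-F'(r))\,(Jg)(r)^2\diff r,
\end{equation*}
and the pointwise sandwich $c_F\le -F'\le C_F$ gives
\begin{equation*}
c_F\langle J^*Jg,g\rangle\le\langle J^*M_{-F'}Jg,g\rangle\le C_F\langle J^*Jg,g\rangle .
\end{equation*}
This operator order $c_FJ^*J\le J^*M_{-F'}J\le C_FJ^*J$ is the only analytic input. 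Adding the self-adjoint operator $R$ to all three terms preserves it:
\begin{equation*}
R+c_FJ^*J\le R+J^*M_{-F'}J\le R+C_FJ^*J .
\end{equation*}

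Next, I note that all three operators in the first chain are positive semidefinite. Their eigenvalues are therefore nonnegative, and the magnitude ordering $\lambda_k(\cdot)$ coincides with the decreasing ordering of the positive eigenvalues. For $R+c\,J^*J$ with $c>0$ and $R\ge0$ (recall $F(T)=a(T)\omega(T)^{-2}\ge0$), the operator is also positive semidefinite, so the same identification holds. The min-max characterization
\begin{equation*}
\lambda_k(B)=\max_{\dim V=k}\ \min_{g\in V,\ \|g\|=1}\langle Bg,g\rangle,
\end{equation*}
valid for positive compact self-adjoint $B$, is monotone under the operator order. Applying it to the first chain yields $c_F\lambda_k(J^*J)\le\lambda_k(J^*M_{-F'}J)\le C_F\lambda_k(J^*J)$, using $\lambda_k(cB)=c\lambda_k(B)$. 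Applying it to the second chain yields \eqref{eq:RJMJ}.

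The only point needing care is the indexing and ordering convention: the paper's $\lambda_k$ ordering is by magnitude, and min-max in that form requires nonnegativity. This is why I would first record that all operators here are positive semidefinite under the hypothesis $-F'>0$ and $F(T)\ge0$. If $R$ were allowed to be negative, I would instead state \eqref{eq:RJMJ} for the decreasingly ordered positive eigenvalues, where min-max monotonicity holds verbatim. The rest is routine.
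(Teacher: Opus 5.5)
Your argument is correct and is essentially the paper's proof: the quadratic form $\langle M_{-F'}Jg,Jg\rangle$ turns the pointwise sandwich on $-F'$ into the operator order $c_FJ^*J\preceq J^*M_{-F'}J\preceq C_FJ^*J$, and Courant--Fischer (Weyl) monotonicity carries this order to the ordered eigenvalues, including after adding $R$. Your remark that the magnitude ordering coincides with the decreasing ordering only because all operators are positive semidefinite is a point the paper leaves implicit; this requires $F(T)\ge0$, which the lemma does not state but Assumption~\ref{ass:drifts} supplies.
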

\begin{proof}
For $\Vert g\Vert=1$, $\langle J^*M_{-F'}Jg,g\rangle=\langle M_{-F'}Jg,Jg\rangle
\in[c_F\langle J^*Jg,g\rangle, C_F\langle J^*Jg,g\rangle]$.
The case $k=1$ follows by taking suprema of the Rayleigh quotient; for $k>1$ apply the Courant--Fischer min--max principle \cite{ReedSimon1978, Bhatia1997} to both sides.
The bound \eqref{eq:RJMJ} follows from the operator order $c_FJ^*J\preceq J^*M_{-F'}J\preceq C_FJ^*J$ and Weyl's monotonicity principle in Weyl (1912) \cite{weyl1912}.
\end{proof}

\subsection{Eigenvalue analysis of the operator $A$} \label{ssec:cJJ-evs}
In the previous sections, we represented the log-likelihood ratio variable as a shifted weighted chi-squared distribution, with the weights determined by the eigenvalues of the operator $A$ in \eqref{eqHS}. Our ultimate goal is to derive density bounds for this random variable via the spectral representation.
To this end, we exploit the special structure of $R+c J^* J$ to characterize the range and properties of its eigenvalues. These spectral estimates are then transferred to the operator $R+J^*M_{-F'}J$ via the congruence relation in \eqref{eq:omega-sandwich} and the bounds in Lemma~\ref{lem:JMJ}, covering the two different regimes of the sign of $-F'$ in Assumption~\ref{ass:drifts}.

In a subsequent section, we specialize to the constant drift case, where the additional structure allows us to obtain more explicit two-sided bounds on the eigenvalues.

\subsubsection{Eigenvalue analysis for the operator $R+J^*M_{-F'}J$}
By \eqref{eq:RJMJ} the leading eigenvalue of $R+J^*M_{-F'}J$ is controlled by that of $R+cJ^*J$ with $c\in\{c_F,C_F\}$. We solve the latter exactly.
\begin{lem} \label{lem:posODE}
For $c>0$ the following are equivalent:
\begin{enumerate}
	\item $h=(R+cJ^*J)g$ for some
$g\in C([0,T])$;
	\item $h\in C^2([0,T])$ solves
\begin{equation}\label{eq:ODE}
  h''(t)=-c\,g(t),\qquad h'(0)=0,\qquad F(T)\,h'(T)=-c\,h(T).
\end{equation}
\end{enumerate}
\end{lem}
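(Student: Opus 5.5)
The plan is to compute $(R+cJ^*J)g$ explicitly and differentiate it. For $g\in C([0,T])$ the adjoint is $J^*f(t)=\int_t^T f(s)\diff s$. This gives
\begin{equation*}
	h(t)=F(T)\int_0^T g(s)\diff s + c\int_t^T\int_0^r g(s)\diff s\diff r .
\end{equation*}

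For the direction (1)$\Rightarrow$(2), I would differentiate this formula directly. Since $g$ is continuous, $Jg\in C^1$, so $h\in C^2$. Its derivatives are
\begin{equation*}
	h'(t)=-c\,(Jg)(t)=-c\int_0^t g(s)\diff s,\qquad h''(t)=-c\,g(t).
\end{equation*}
From these, $h'(0)=0$ is immediate. At $t=T$ the double integral vanishes, so $h(T)=F(T)\int_0^T g$, while $h'(T)=-c\int_0^T g$. Multiplying the second identity by $F(T)$ gives $F(T)h'(T)=-c\,F(T)\int_0^T g=-c\,h(T)$. This is exactly the boundary condition in \eqref{eq:ODE}.

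For the direction (2)$\Rightarrow$(1), I would reconstruct $h$ from the ODE. Given $h\in C^2$ with $h''=-cg$, the function $g=-h''/c$ is continuous. Integrating with $h'(0)=0$ gives $h'(t)=-c\int_0^t g$. A second integration gives
\begin{equation*}
	h(t)=h(T)+c\int_t^T\int_0^r g(s)\diff s\diff r = h(T)+c\,J^*Jg(t).
\end{equation*}
It remains to identify $h(T)$ with $Rg=F(T)\int_0^T g$. The boundary condition gives $F(T)h'(T)=-c\,h(T)$, and we also have $h'(T)=-c\int_0^T g$. Combining them, $-c\,F(T)\int_0^T g=-c\,h(T)$. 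Dividing by $c>0$ yields $h(T)=F(T)\int_0^T g=Rg$, so $h=(R+cJ^*J)g$.

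The only delicate point is this last step, and specifically the orientation of the boundary condition. The condition $F(T)h'(T)=-c\,h(T)$ is what correctly encodes both the case $F(T)>0$ and the degenerate case $F(T)=0$, where it forces $h(T)=0$ and $R=0$. I expect no real obstacle: every step is an explicit computation, with the continuity of $g$ guaranteeing the $C^2$ regularity.
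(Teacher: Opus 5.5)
Your proposal is correct and follows essentially the same route as the paper. For (1)$\Rightarrow$(2) you differentiate the explicit form $h(t)=F(T)\int_0^T g+c\int_t^T\int_0^r g$, which is the paper's kernel $\int_0^T(T-s\vee t)g(s)\,\mathrm{d}s$ written as an iterated integral; for (2)$\Rightarrow$(1) you integrate $h''=-cg$ twice and use the boundary condition with $c>0$ to get $h(T)=F(T)\int_0^T g$, which is the integration-by-parts step the paper only states in one line.
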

\begin{proof}
$(1)\Rightarrow(2)$: from
\begin{equation*}
	(R+cJ^*J)g(t)=F(T)\int_0^T g+c\int_0^T(T-s\vee t)g
\end{equation*}
we get
\begin{equation*}
	h'(t)=-c\int_0^t g.
\end{equation*}
Thus,
\begin{equation*}
	h''=-cg, \quad h'(0)=0, \quad F(T) h'(T)=-c h(T).
\end{equation*}
$(2)\Rightarrow(1)$: setting
$g=-h''/c$ and integrating by parts give $h=(R+cJ^*J)g$.
\end{proof}

The eigenvalues of $J^*J$ are classical: the eigenfunctions of $J^*Jg(t)=\int_0^T(T-s\vee t)g(s) \diff s$ solves $-\phi''=\mu\phi$, $\phi'(0)=0$, $\phi(T)=0$, whence
\begin{equation}\label{eq:JJ-evs}
  \lambda_k(J^*J)=\frac{T^2}{\bigl((k+\frac12)\pi\bigr)^2}\asymp T^2k^{-2},
  \qquad k\ge0.
\end{equation}

\begin{lem}[Eigenvalue bounds for $R+cJ^*J$, $c > 0$]\label{lem:posspec}
For $c > 0$, we have with the notation of \eqref{eq:transc} and $F(T) > 0$ below
\begin{enumerate}
  \item[\textup{(a)}] For $j\ge1$,
    $\dfrac{cT^2}{((j+\frac12)\pi)^2}<\lambda_j (R + c J^* J)<\dfrac{cT^2}{(j\pi)^2}$,
    so $\lambda_j (R + c J^* J) \asymp cT^2j^{-2}$.
  \item[\textup{(b)}] For $j=0$, since $\tan x\ge x$ on $(0,\frac\pi2)$,
    $x_0\tan x_0\ge x_0^2$ gives $x_0\le\sqrt{cT/F(T)}$, hence
    \begin{equation}\label{eq:lam0}
      \lambda_0 (R + c J^* J) =\frac{cT^2}{x_0^2}\ge T\,F(T),
    \end{equation}
    and also $\lambda_0>4cT^2/\pi^2$.
\end{enumerate}
The strict inequalities become equalities at $F(T) =0$.
\end{lem}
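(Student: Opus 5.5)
We reduce the spectrum of $R+cJ^*J$ to a transcendental equation via Lemma~\ref{lem:posODE}, and then read off the eigenvalue bounds from its roots. Let $\lambda>0$ be an eigenvalue with eigenfunction $g$, so that $h=\lambda g$. The eigenfunction is continuous because the range of the operator consists of continuous functions. Lemma~\ref{lem:posODE} then gives $\lambda h''=-ch$, $h'(0)=0$ and $F(T)h'(T)=-ch(T)$. Writing $\lambda=cT^2/x^2$ with $x>0$, the condition $h'(0)=0$ forces $h(t)=\cos(xt/T)$. Substituting into the boundary condition at $T$ gives $-F(T)(x/T)\sin x=-c\cos x$, that is,
\begin{equation*}
x\tan x=\frac{cT}{F(T)} .
\end{equation*}
This should be the equation referred to as \eqref{eq:transc}. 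Negative or zero eigenvalues are excluded, since $R+cJ^*J\succeq 0$ and is injective, as $J$ is. Conversely, every root $x>0$ yields an eigenfunction by the implication $(2)\Rightarrow(1)$. Eigenvalues are simple because the ODE solution is unique up to scaling. Hence the eigenvalues in decreasing order are $cT^2/x_j^2$, where $x_0<x_1<\dots$ are the positive roots.

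Next we locate the roots. The function $x\mapsto x\tan x$ increases strictly from $0$ to $+\infty$ on $(0,\pi/2)$. On each branch $(j\pi-\pi/2,\,j\pi+\pi/2)$ with $j\ge1$ it takes the value $-\infty$ at the left endpoint, is negative on $(j\pi-\pi/2,j\pi)$, vanishes at $j\pi$, and increases to $+\infty$ on $(j\pi,j\pi+\pi/2)$. The right-hand side is $cT/F(T)\in(0,\infty)$, so there is exactly one root in each interval: $x_0\in(0,\pi/2)$ and $x_j\in(j\pi,(j+\tfrac12)\pi)$ for $j\ge1$. The case $x\tan x=-\text{const}$ does not arise, because the right-hand side is positive.

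Part (a) follows directly: $j\pi<x_j<(j+\tfrac12)\pi$ gives $cT^2/((j+\tfrac12)\pi)^2<\lambda_j<cT^2/(j\pi)^2$.

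For part (b), $x_0<\pi/2$ gives $\lambda_0>4cT^2/\pi^2$. Also $x_0^2\le x_0\tan x_0=cT/F(T)$, so $\lambda_0=cT^2/x_0^2\ge TF(T)$.

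When $F(T)=0$, the boundary condition becomes $h(T)=0$. The roots are then exactly $(j+\tfrac12)\pi$, which recovers \eqref{eq:JJ-evs}, so the left inequalities become equalities.

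The only delicate step is ensuring that no eigenvalue is missed or double counted, so that the index $j$ matches the $j$-th root. This is settled by the exact one-root-per-branch count together with simplicity.
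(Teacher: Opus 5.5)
Your proposal is correct and follows the paper's proof essentially step for step: you use positivity and continuity of eigenfunctions, reduce via Lemma~\ref{lem:posODE} to $h=\cos(xt/T)$ and the equation $x\tan x=cT/F(T)$, find exactly one simple root on each branch $(j\pi,(j+\tfrac12)\pi)$, and read the bounds off from $x_j$ (with $\tan x\ge x$ giving $\lambda_0\ge TF(T)$). The only step you pass over silently, which the paper makes explicit, is that $\cos x\neq 0$ before dividing; this holds because otherwise $F(T)\,x\sin x=\pm F(T)\,x\neq 0$ would have to equal $cT\cos x=0$.
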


\begin{proof}
Assume first $F(T)>0$, so that $\gamma=cT/F(T) > 0$.
The operator $R+cJ^*J$ is compact and self-adjoint, and it is positive definite.
In particular $0$ is not an eigenvalue and every eigenvalue is positive.
Let $(R+cJ^*J)g=\lambda g$ with $g\neq0$, $\lambda>0$.
Then $g=\lambda^{-1}(R+cJ^*J)g$ is continuous, because $Rg$ is a constant function and $J^*Jg(t)=\int_0^T(T-s\vee t)\,g(s)\diff s$ is continuous in $t$.
Hence Lemma~\ref{lem:posODE} applies to $h:=\lambda g$ and $g$ solves,
\begin{equation} \label{eq:evpcJJ}
	g''=\mu g,\qquad g'(0)=0,\qquad F(T)\,g'(T)=-c\,g(T),
	\qquad \mu=-\frac{c}{\lambda}<0 .
\end{equation}
Write $\mu=-\alpha^2$ with $\alpha>0$. The equation with $g'(0)=0$ forces $g(t)=A\cos(\alpha t)$, $A\neq0$, and the right boundary condition becomes $F(T)\,\alpha\sin(\alpha T)=c\cos(\alpha T)$.
Here we have $\cos(\alpha T)\neq0$, since otherwise
$\sin(\alpha T)=\pm1$ and the left-hand side would equal $\pm F(T)\alpha\neq0$ while the right-hand side vanishes.
Dividing by $F(T)\,\alpha\cos(\alpha T)$ and setting
$x:=\alpha T$ yields
\begin{equation} \label{eq:transc}
x \tan x= \frac{cT}{F(T)} =: \gamma >0 \quad \text{ with } \quad
\lambda= \frac{c}{\alpha^2} = \frac{cT^2}{x^2}.
\end{equation}
Conversely, for every root $x>0$ of \eqref{eq:transc} the function $g(t)=\cos(xt/T)$ solves the boundary value problem,
so $cT^2/x^2$ is an eigenvalue by Lemma~\ref{lem:posODE}.
Each eigenvalue is simple, because for fixed $\mu$ the solutions of $g''=\mu g$ with $g'(0)=0$ form a one-dimensional space.

Wherever $\tan x\le0$ is defined, we have $x\tan x\le0<\gamma$, so all positive
roots of \eqref{eq:transc} lie in the branches $(j\pi,(j+\tfrac12)\pi)$, $j\ge0$,
on which $\tan x>0$.
On each such branch the map $x\mapsto x\tan x$ is continuous
and strictly increasing, since its derivative is $\tan x+x/\cos^2x>0$.
It increases from $0$ at $x=j\pi$ to $+\infty$ as $x\uparrow(j+\tfrac12)\pi$.
Hence it takes the value $\gamma$ at exactly one interior point $x_j\in(j\pi,(j+\tfrac12)\pi)$, and both
inclusions are strict.
Since $j\mapsto x_j$ is strictly increasing, the eigenvalues $cT^2/x_j^2$ are strictly decreasing in $j$.
Therefore the $j$-th largest eigenvalue is exactly
\begin{equation*}
	\lambda_j(R+cJ^*J)=\frac{cT^2}{x_j^{2}},\qquad j\ge0 .
\end{equation*}

We can compute the bounds for the eigenvalues: (a) For $j\ge1$, the strict inclusion $x_j\in(j\pi,(j+\tfrac12)\pi)$ gives
\begin{equation*}
	\frac{cT^2}{((j+\tfrac12)\pi)^2}<\lambda_j(R+cJ^*J)<\frac{cT^2}{(j\pi)^2},
\end{equation*}
and in particular $\lambda_j(R+cJ^*J)\asymp cT^2j^{-2}$.
(b) For $j=0$ we have $x_0\in(0,\tfrac\pi2)$, so $\lambda_0=cT^2/x_0^2>4cT^2/\pi^2$. Moreover $\tan x\ge x$ on $(0,\tfrac\pi2)$
gives $\gamma=x_0\tan x_0\ge x_0^2$, i.e.\ $x_0\le\sqrt\gamma=\sqrt{cT/F(T)}$,
and therefore
\begin{equation*}
	\lambda_0(R+cJ^*J)=\frac{cT^2}{x_0^{2}}\ge\frac{cT^2}{\gamma}=T\,F(T).
\end{equation*}

Finally, if $F(T)=0$, then $R=0$ and $\lambda_j(cJ^*J)=cT^2/((j+\tfrac12)\pi)^2$ directly by \eqref{eq:JJ-evs};
all bounds of (a) and (b) hold in this case as boundary cases (with $T\,F(T)=0$).
\end{proof}

Combining Lemma~\ref{lem:posspec} with Lemma~\ref{lem:JMJ} and the congruence sandwich \eqref{eq:omega-sandwich} gives the complete description of the spectrum in the positive regime.

\begin{cor}[Spectrum in regime $(+)$]\label{cor:posregime}
	Assume $0<c_F\le-F'\le C_F$ a.e.\ and $F(T) \ge 0$. Then $K=R+J^*M_{-F'}J\succ0$, all eigenvalues of $A$ are positive, and
	\begin{equation}\label{eq:posregime-evs}
		\lambda_0(A)\ge c_\omega^2\,\max\Bigl\{T\,F(T),\ \frac{4c_FT^2}{\pi^2}\Bigr\},
		\qquad
		\frac{c_\omega^2\, c_F\, T^2}{((j+\frac12)\pi)^2}\le\lambda_j(A)
		\le\frac{C_\omega^2\, C_F\, T^2}{(j\pi)^2},\quad j\ge1 .
	\end{equation}
	Moreover $\lambda_0(A)\le C_\omega^2\bigl(TF(T)+4C_FT^2/\pi^2\bigr)$ by Weyl's monotonicity inequality for sums of positive semidefinite operators \cite{weyl1912}.
\end{cor}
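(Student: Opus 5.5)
The plan is to chain three earlier results: Lemma~\ref{lem:JMJ}, Lemma~\ref{lem:posspec} applied with $c=c_F$ and with $c=C_F$, and the congruence sandwich \eqref{eq:omega-sandwich}.

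\textbf{Positivity.} Since $-F'\ge c_F>0$ a.e., for every $g\neq0$ we have
\begin{equation*}
\langle J^*M_{-F'}Jg,g\rangle=\int_0^T(-F')(Jg)^2\ge c_F\Vert Jg\Vert^2>0,
\end{equation*}
because $J$ is injective. Moreover $R\succeq0$ because $F(T)\ge0$. Hence $K=R+J^*M_{-F'}J\succ0$. The operator $A=M_\omega KM_\omega$ is congruent to $K$ via the invertible $M_\omega$, so by Lemma~\ref{lem:congruence} it has the same inertia. Alternatively, $\langle Ag,g\rangle=\langle KM_\omega g,M_\omega g\rangle>0$ directly. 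Thus all eigenvalues of $A$ are positive, and magnitude ordering coincides with the usual ordering.

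\textbf{Bounds for $K$.} Lemma~\ref{lem:JMJ}, via \eqref{eq:RJMJ}, gives
\begin{equation*}
\lambda_k(R+c_FJ^*J)\le\lambda_k(K)\le\lambda_k(R+C_FJ^*J)\qquad\text{for all }k.
\end{equation*}
The case $k=0$ is also covered, since Weyl monotonicity applies to the top eigenvalue. Then insert Lemma~\ref{lem:posspec}:
\begin{itemize}
\item[(a)] For $j\ge1$, the lower bound $c_FT^2/((j+\frac12)\pi)^2$ and the upper bound $C_FT^2/(j\pi)^2$ follow. They remain valid as non-strict bounds when $F(T)=0$.
\item[(b)] For $j=0$, $\lambda_0(K)\ge\lambda_0(R+c_FJ^*J)\ge\max\{TF(T),4c_FT^2/\pi^2\}$.
\end{itemize}
Multiplying by $c_\omega^2$ or $C_\omega^2$ through \eqref{eq:omega-sandwich} yields \eqref{eq:posregime-evs}.

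\textbf{Upper bound on $\lambda_0(A)$.} Bound $\lambda_0(K)\le\Vert R\Vert+\lambda_0(J^*M_{-F'}J)\le TF(T)+C_F\lambda_0(J^*J)$. By \eqref{eq:JJ-evs}, $\lambda_0(J^*J)=4T^2/\pi^2$, so $\lambda_0(K)\le TF(T)+4C_FT^2/\pi^2$. Multiplying by $C_\omega^2$ gives the stated bound.

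\textbf{Main obstacle.} The only delicate point is index bookkeeping. Lemma~\ref{lem:JMJ} is phrased for $k\ge1$ with the ordering $\lambda_1\ge\lambda_2\ge\cdots$, while Lemma~\ref{lem:posspec} indexes from $0$. I will check that the Courant--Fischer comparison holds for every index, including the top one, so the indices line up with the $j$ used in Lemma~\ref{lem:posspec}. Positivity makes the magnitude ordering in \eqref{eq:omega-sandwich} agree with this ordering.
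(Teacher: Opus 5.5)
Your proposal is correct and follows the paper's route exactly. The paper obtains the corollary by combining Lemma~\ref{lem:posspec} (with $c=c_F$ and $c=C_F$), the comparison \eqref{eq:RJMJ} of Lemma~\ref{lem:JMJ} and the congruence sandwich \eqref{eq:omega-sandwich}, and it bounds $\lambda_0$ from above by $\lambda_0(K)\le TF(T)+C_F\lambda_0(J^*J)$, just as you do; your quadratic-form argument for $K\succ0$ and your reconciliation of the $0$- and $1$-based indexing fill in details the paper leaves implicit.
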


The boundary value problem of Lemma~\ref{lem:posODE} and the transcendental equation \eqref{eq:transc} were derived under $c>0$.
When $-F'<0$, the Volterra multiplier in the operator is negative.
The constant in the simplification $R+cJ^*J$ is negative.
Also, $\gamma=cT/F(T)<0$, so the above bounds are unavailable.
We give a self-contained replacement that uses only the rank-one structure of $R$ and the spectral sandwich of Lemma~\ref{lem:JMJ}.

Suppose $-F'\le0$ on $[0,T]$ and write $G:=F'\ge0$, so that
\begin{equation}\label{eq:Bdef}
  K:=R+J^*M_{-F'}J=R-V,\qquad V:=J^*M_G J\succeq0,\quad G=F'\ge0 .
\end{equation}
Here $R$ is rank one and positive (recall $F(T)>0$), and $V$ is compact, positive semidefinite, with eigenvalues controlled by Lemma~\ref{lem:JMJ}: if $0<c_G\le G\le C_G$ then
\begin{equation}\label{eq:Kev}
  c_G\,\lambda_k(J^*J)\le\mu_k(V)\le C_G\,\lambda_k(J^*J),\qquad
  \mu_k(V)\asymp k^{-2}\ (k\ge1),
\end{equation}
writing $\mu_0(V)\ge\mu_1(V)\ge\cdots \ge 0$ for the eigenvalues of $V$.

\begin{lem}[Eigenvalue bounds in regime $(-)$]\label{lem:negspec}
Let $K=R-V$ as in \eqref{eq:Bdef}, with $R=T F(T) \,\varphi\varphi^*$ rank-one positive with $TF(T)>0$, $\varphi:=\mathbf 1/\sqrt T$, $\Vert\varphi\Vert=1$ and $V\succeq0$ compact.
Let $\mu_0\ge\mu_1\ge\cdots\ge0$ be the eigenvalues of $V$, and let $\nu_1(K) \ge \nu_2(K) \ge \cdots$ denote the magnitudes of the negative eigenvalues of $K$, counted with multiplicity (with $\nu_j(K) := 0$ beyond their number).
\begin{enumerate}
  \item[\textup{(i)}] $\lambda_1(K)\le0$; i.e.\ $K$ has at most one positive
    eigenvalue.
  \item[\textup{(ii)}] The negative eigenvalues of $K$ interlace those of $V$:
    \begin{equation}\label{eq:interlace}
      \mu_{k+1}\;\le\; \nu_{k+1}(K) \;\le\;\mu_k,
      \qquad k\ge0,
    \end{equation}
	whether $K$ has one or no positive eigenvalue. In particular $\nu_j (K)\to 0$ and inherits the decay rate of $\mu_j$.
\end{enumerate}
\end{lem}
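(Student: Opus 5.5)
The plan is to treat $K=R-V$ as a rank-one positive perturbation of the nonpositive compact operator $-V$, and then apply the Courant--Fischer min--max principle together with Weyl-type interlacing for rank-one perturbations. I write $\lambda^{\downarrow}_k(B)$ for the $k$-th largest eigenvalue of a compact self-adjoint $B$, with $k\ge0$, counting only the positive part of the spectrum and padding with $0$ otherwise.

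For (i), I argue by contradiction. Suppose $K$ had two positive eigenvalues, counted with multiplicity. Then there would be a two-dimensional subspace $E$ with $\langle Kg,g\rangle>0$ for all $g\in E\setminus\{0\}$. Since $E$ is two-dimensional, it contains a nonzero $g\perp\varphi$. For this $g$,
\begin{equation*}
\langle Kg,g\rangle = TF(T)\,\langle g,\varphi\rangle^2-\langle Vg,g\rangle=-\langle Vg,g\rangle\le 0,
\end{equation*}
which is a contradiction. Hence $K$ has at most one positive eigenvalue. In the paper's magnitude-free notation, this says the second largest eigenvalue is $\le0$, which is the claim $\lambda_1(K)\le0$.

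For (ii), I pass to $-K=V-R$ and compare the positive spectrum of $-K$ with that of $V$. The $\nu_k(K)$ are exactly the positive eigenvalues of $-K$ in decreasing order.
\begin{itemize}
\item \emph{Upper bound.} Since $R\succeq0$, we have $-K\preceq V$. Weyl monotonicity via min--max gives $\nu_{k+1}(K)\le\mu_{k+1}\le\mu_k$. This is stronger than needed.
\item \emph{Lower bound.} Use the max--min characterisation $\nu_{k+1}(K)\ge\min_{g\in E,\Vert g\Vert=1}\langle -Kg,g\rangle$ for any $(k+2)$-dimensional subspace $E$, valid when the right side is positive and trivially otherwise since $\nu\ge0$. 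Take $E_0=\mathrm{span}\{f_0,\dots,f_{k+1}\}$, spanned by eigenfunctions of $V$ for $\mu_0,\dots,\mu_{k+1}$. Then $E=E_0\cap\varphi^\perp$ has dimension at least $k+1$. On $E$, $\langle -Kg,g\rangle=\langle Vg,g\rangle\ge\mu_{k+1}\Vert g\Vert^2$. Using a $(k+1)$-dimensional subspace in the min--max for the $(k+1)$-th largest eigenvalue, i.e.\ index $k$ counted from $0$, gives $\nu_{k+1}(K)\ge\mu_{k+1}$.
\end{itemize}

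Combining the two bounds yields \eqref{eq:interlace}. Both arguments use only min--max, so they hold regardless of whether $K$ has a positive eigenvalue, as required.

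The main obstacle is index bookkeeping. The upper bound above actually gives $\nu_{k+1}\le\mu_{k+1}$, so I will re-check the indexing to make sure it matches the stated $\mu_k$ form. I will also handle zero eigenvalues and the case where $V$ has finite rank, where the padding convention $\nu_j=0$ makes the inequalities trivial. When $\mu_{k+1}=0$ I use $\nu\ge0$ directly. Finally, $\nu_j\to0$ with the decay of $\mu_j$ follows immediately from the sandwich together with \eqref{eq:Kev}.
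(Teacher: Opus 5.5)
Your proposal follows the paper's proof step for step. Part (i) is the observation that $K\preceq 0$ on the codimension-one subspace $\varphi^\perp$; your contradiction argument is the min--max statement in disguise. In (ii), the lower bound is the same $E_0\cap\varphi^\perp$ construction in the max--min principle, and the upper bound comes from $-K\preceq V$ and Weyl monotonicity.

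There is, however, one false claim, exactly where you suspected it. Weyl monotonicity does \emph{not} give $\nu_{k+1}(K)\le\mu_{k+1}$. Since $\nu$ is indexed from $1$ and $\mu$ from $0$, the inequality $\lambda^{\downarrow}_k(-K)\le\lambda^{\downarrow}_k(V)$ reads $\nu_{k+1}(K)\le\mu_k$, and nothing better holds in general. For instance, suppose $V$ has an eigenvector $f_0\perp\varphi$ with eigenvalue $\mu_0>\mu_1$. Then $Rf_0=0$, so $-Kf_0=Vf_0=\mu_0 f_0$ and $\nu_1(K)\ge\mu_0>\mu_1$. Your own lower bound gives a quick sanity check: combined with the claimed ``stronger'' bound it would force $\nu_j(K)=\mu_j$ for all $j\ge1$. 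That would make the negative spectrum of $K$ independent of the rank-one term $R$, which is absurd.

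With the index corrected, the upper bound is precisely the required $\nu_{k+1}(K)\le\mu_k$, and the proof is complete. Similarly, your preliminary remark in the lower bound about $(k+2)$-dimensional subspaces is not what you use. The max--min for $\lambda^{\downarrow}_k(-K)$ runs over $(k+1)$-dimensional subspaces. The step you actually carry out is correct and coincides with the paper's: you take $E=E_0\cap\varphi^\perp$ with $\dim E\ge k+1$ and note that the Rayleigh quotient of $-K$ on $E$ is at least $\mu_{k+1}$.
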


The proof is a Courant--Fischer argument using only the rank-one structure of $R$ and the positivity of $V$.
On $\varphi^\perp$ the quadratic form of $K$ is nonpositive, which caps the number of positive eigenvalues at one, while $-K=V-R$ is a rank-one downward perturbation of $V$, whose min--max values interlace the $\mu_k$.
The details are deferred to Appendix~\ref{sec:appendix}.

\subsubsection{The eigenvalue problem in the constant case} \label{ssec:eigenconst}
For the OU process with constant drift $b_i \in \R$ and constant $u=a$, $v=\sigma_ia^2$, we have
\begin{align*}
	F(T) = a e^{2 b_i T}, \quad F'(r) = a(b_0+b_1)\, e^{2b_i r}, \quad \omega_i (r)= e^{-b_i r}.
\end{align*}
Assume without loss of generality $a=\Delta>0$ and set
\begin{equation*}
	\kappa:=-(b_0+b_1),
\end{equation*}
so that $-F'(r)=\Delta\kappa\, e^{2b_ir}$. $\kappa$ does not depend on the label $i$.
For $b_i \neq 0$, $A$ has the explicit formulation
\begin{equation*}
	Ag(t) =\int_0^T\!\Bigl[\bigl(a-\frac{v}{2b_i}\bigr)e^{b_i|t-s|}
        +\frac{v}{2b_i}e^{b_i(2T-s-t)}\Bigr]g(s)\diff s,
\end{equation*}
using $\int_{s\vee t}^T e^{b_i(2r-s-t)}\diff r =\frac{1}{2b_i}\bigl(e^{b_i(2T-s-t)}-e^{b_i|t-s|}\bigr)$;
otherwise
\begin{equation*}
	Ag(t) =\int_0^T\! \bigl(a + v\,(T - s \vee t)\bigr)g(s)\diff s.
\end{equation*}

In the constant case we have
\begin{equation} \label{eq:EZconstant}
 \E [Z] = \frac{v}{2} \int_0^T \E_i[X_s^2] \diff s =
	\left\{ \begin{matrix}
		\frac{v}{8b_i^2} \left(e^{2b_iT} - 1 - 2b_i T \right), &  b_i \neq 0, \\
		\frac{vT^2}{4}, & b_i = 0.
	\end{matrix} \right.
\end{equation}
The bounds of $\omega$ are $c_{\omega,i}=1\wedge e^{-b_iT}$ and $C_{\omega,i}=1\vee e^{-b_iT}$, so that $e^{2b_ir}=\omega(r)^{-2}\in[C_{\omega,i}^{-2},c_{\omega,i}^{-2}]$.
We have the following spectrum result for the random variable $Z$.
\begin{prop}\label{prop:const-spectrum}
With an abuse of notation, write $c_\omega=c_{\omega,i}$, $C_\omega=C_{\omega,i}$ within this proposition.
\begin{enumerate}
  \item[\textup{(i)}]
  If $\kappa > 0$, then $0<c_F\le-F'\le C_F$ with $c_F = \Delta \kappa\, C_{\omega}^{-2}$, $C_F = \Delta \kappa\, c_{\omega}^{-2}$. Hence, by Corollary~\ref{cor:posregime}, all eigenvalues of $A$ are positive,
  \begin{equation*}
	c_{\omega}^2\, T \Delta\, e^{2b_i T}\le\lambda_0(A) \le C_{\omega}^2 T\Delta \left(  e^{2b_i T} + \frac{4 \kappa T}{\pi^2 c_{\omega}^2}\right),
  \end{equation*}
  and for $j \ge 1$
\begin{equation*}
	\frac{c_\omega^2\, \Delta \kappa\, T^2}{C_{\omega}^2 \bigl((j+\frac{1}{2})\pi \bigr)^2} \le\lambda_j(A)
    \le \frac{C_\omega^2\, \Delta \kappa\, T^2}{c_{\omega}^2 \bigl(j\pi \bigr)^2} .
\end{equation*}
Hence $\lambda_j(A)\asymp\Delta\kappa\,j^{-2}$ for $j\ge1$.

\item[\textup{(ii)}]
If $\kappa = 0$, then $K=R$ and
  \begin{equation*}
	c_{\omega}^2\, T \Delta\, e^{2b_i T}\le\lambda_0(A) \le C_\omega^2\,T\Delta\, e^{2b_iT}, \quad \lambda_j (A) = 0 , \; j\ge 1.
  \end{equation*}
\item[\textup{(iii)}]
If $\kappa < 0$, then $-F'\le0$ with $0<c_G\le F'\le C_G$ for $c_G = \Delta |\kappa|\, C_{\omega}^{-2}$, $C_G = \Delta |\kappa|\, c_{\omega}^{-2}$. The operator $A$ has at most one positive eigenvalue, and the magnitudes $\nu_j(A)$ of its negative eigenvalues satisfy
\begin{equation*}
	 \frac{c_\omega^2\, \Delta |\kappa|\, T^2}{C_{\omega}^2 \bigl((j+\frac{1}{2})\pi \bigr)^2}\le\nu_j(A) \le \frac{C_\omega^2\, \Delta |\kappa|\, T^2}{c_{\omega}^2 \bigl((j-\frac{1}{2})\pi \bigr)^2} , \qquad j\ge1 .
\end{equation*}
\end{enumerate}
In all cases the nonzero eigenvalues obey $|\lambda_j(A)|\asymp j^{-2}$ as $j\to\infty$ (when infinitely many are present), with constant sign for $j\ge1$.
\end{prop}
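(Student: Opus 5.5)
The plan is to specialize the general machinery of Sections~\ref{ssec:factorization}--\ref{ssec:cJJ-evs} to the constant case. First I compute the ingredients explicitly. With $u=\Delta$ and $v=\sigma_i\Delta^2$, formula \eqref{eq:Fprime-margin} gives $F'(r)=\psi\,\omega(r)^{-2}=\Delta(b_1+b_0)e^{2b_ir}=-\Delta\kappa\,e^{2b_ir}$, and $F(T)=\Delta e^{2b_iT}>0$. Since $\omega(r)^{-2}=e^{2b_ir}$ ranges over $[C_{\omega,i}^{-2},c_{\omega,i}^{-2}]$ on $[0,T]$, the sign of $-F'$ is exactly the sign of $\kappa$. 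This also gives the sandwich constants $c_F,C_F$ (respectively $c_G,C_G$) $=\Delta|\kappa|C_\omega^{-2},\,\Delta|\kappa|c_\omega^{-2}$. The case split (i)--(iii) is then dictated by the sign of $\kappa$.

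For (i), with $\kappa>0$, I invoke Corollary~\ref{cor:posregime} directly. Its bounds read as follows:
\begin{itemize}
\item $\lambda_0(A)\ge c_\omega^2 TF(T)=c_\omega^2T\Delta e^{2b_iT}$;
\item $\lambda_0(A)\le C_\omega^2(TF(T)+4C_FT^2/\pi^2)=C_\omega^2T\Delta(e^{2b_iT}+4\kappa T/(\pi^2c_\omega^2))$;
\item for $j\ge1$, the bounds $c_\omega^2c_FT^2/((j+\frac12)\pi)^2\le\lambda_j(A)\le C_\omega^2C_FT^2/(j\pi)^2$, which become the displayed ones after substituting $c_F,C_F$.
\end{itemize}

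For (ii), with $\kappa=0$, we have $F'\equiv0$, so $K=R$ has the single nonzero eigenvalue $TF(T)=T\Delta e^{2b_iT}$. The congruence sandwich \eqref{eq:omega-sandwich} bounds $\lambda_0(A)$ between $c_\omega^2$ and $C_\omega^2$ times this value. It also preserves the rank, so $\lambda_j(A)=0$ for $j\ge1$.

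For (iii), with $\kappa<0$, I use Lemma~\ref{lem:negspec} with $V=J^*M_GJ$ and $G=\Delta|\kappa|e^{2b_ir}$. Part (i) of that lemma gives at most one positive eigenvalue of $K$, and the congruence by $M_\omega$ preserves inertia, so the same holds for $A$. The interlacing \eqref{eq:interlace} gives $\mu_j(V)\le\nu_j(K)\le\mu_{j-1}(V)$. Combining this with \eqref{eq:Kev} and \eqref{eq:JJ-evs} yields
\[
c_GT^2/((j+\tfrac12)\pi)^2\le\nu_j(K)\le C_GT^2/((j-\tfrac12)\pi)^2 .
\]

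The main obstacle is the transfer of these bounds from $K$ to $A$, and it has two parts. First, \eqref{eq:omega-sandwich} is stated in magnitude ordering, whereas here I need it for the negative eigenvalues separately. I would apply Lemma~\ref{lem:congruence} to the negative part; failing that, I would argue via Courant--Fischer on the quadratic form $\langle KM_\omega g,M_\omega g\rangle$ restricted to $\nu$-values, using $c_\omega\|g\|\le\|M_\omega g\|\le C_\omega\|g\|$. Second, index alignment is delicate when one positive eigenvalue is present, but I avoid the issue by working with the $\nu_j$ directly. Multiplying by $c_\omega^2$ and $C_\omega^2$ then gives the stated bounds for (iii).

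The final assertion that $|\lambda_j(A)|\asymp j^{-2}$ with constant sign for $j\ge1$ follows by reading off the two-sided bounds in cases (i) and (iii). In case (ii) only finitely many eigenvalues are nonzero, so the assertion is vacuous there.
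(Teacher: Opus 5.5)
Your proposal is correct and follows the paper's proof step for step: Corollary~\ref{cor:posregime} for (i), $K=R$ with the congruence sandwich for (ii), and Lemma~\ref{lem:negspec} with \eqref{eq:Kev} and \eqref{eq:JJ-evs} for (iii), with the negative eigenvalues transferred to $A$ through Lemma~\ref{lem:congruence}(ii) rather than the magnitude-ordered sandwich, exactly as the paper does. One shared caveat is that neither you nor the paper actually derives the closing phrase ``constant sign for $j\ge1$'' in case (iii); it holds only in the sense that all eigenvalues other than the positive one are negative, because in magnitude ordering that positive eigenvalue need not be $\lambda_0(A)$ (for $i=0$, $b_0=1$, $b_1=1+\Delta$ with $\Delta$ large, $\nu_1(A)\gtrsim\Delta^2$ while the positive eigenvalue is at most $C_\omega^2TF(T)=O(\Delta)$).
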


\begin{proof}
	The congruence sandwich argument \eqref{eq:omega-sandwich} reduces every claim about $A$ to the spectrum of $K=R + J^* M_{-F'} J$, including the sign counts.
	Applying Corollary~\ref{cor:posregime} (case (i)), the identity $K=R$ (case (ii)), and Lemma~\ref{lem:negspec} with \eqref{eq:Kev} (case (iii)), with
	\begin{equation*}
		F(T) = \Delta e^{2b_i T}, \quad -F'(r) = \Delta \kappa\, e^{2b_i r},
	\end{equation*}
	gives the bounds. In particular, in case (i) Weyl's inequality yields
	\begin{align*}
		\lambda_0(K) \le T F(T) + C_F \lambda_0(J^*J) = T \Delta e^{2b_i T} + \frac{4 \Delta \kappa T^2}{\pi^2 c_{\omega}^2},
	\end{align*}
	and multiplying by $C_\omega^2$ gives the stated upper bound.
	In case (iii), Lemma~\ref{lem:negspec}(ii) gives $\mu_{j}\le\nu_{j}(K)\le\mu_{j-1}$ for $j\ge1$, and $\mu_k\in[c_GT^2((k+\frac12)\pi)^{-2}, C_GT^2((k+\frac12)\pi)^{-2}]$ by \eqref{eq:Kev} and \eqref{eq:JJ-evs}.
	The sandwich \eqref{eq:omega-sandwich} transfers this to $A$ (using that the congruence preserves the sign pattern, so the $j$-th negative eigenvalue of $A$ is comparable to the $j$-th negative eigenvalue of $K$ by Lemma~\ref{lem:congruence}(ii)).
\end{proof}

\begin{rem}
	The sign of $\kappa=-(b_0+b_1)$ separates the two regimes of Assumption~\ref{ass:drifts} (3) in the constant case: regime $(+)$ is $b_0+b_1<0$ and regime $(-)$ is $b_0+b_1 \ge 0$. In particular the two regimes together cover all constant pairs $b_1\neq b_0$.
\end{rem}

\subsection{Distributional characterization} \label{ssec:density}
Section~\ref{ssec:chaos} states that the random variable in \eqref{eq:Z}
\begin{align*}
	Z = \int_0^T u(t) X_t \diff W_t + \frac{1}{2} \int_0^T v(t) X_t^2 \diff t
\end{align*}
has the representation of a generalized weighted chi-squared distribution in \eqref{eq:Chisquare} with eigenvalues of the operator $A$.
In this section, we connect the density of the shifted generalized chi-squared variable on the "small-ball" event $\{\vert Z \vert \le \varepsilon \}$ to the eigenvalue bounds obtained in Section~\ref{ssec:cJJ-evs}.
These spectral estimates provide the key ingredient for controlling the density near the relevant threshold, which is crucial for the margin condition.
The bound depends on locating the small-ball event exactly, which is discussed in the following lemma. $f_X$ denotes the density function of a variable $X$ in $\R$.

\begin{lem} \label{lem:shift}
For every drift pair $b_0,b_1$ and every label $i$ under Assumption~\ref{ass:drifts},
\begin{equation}\label{eq:shiftid}
  \sum_{j\ge0}\lambda_j- 2 \E [Z] \, = \, \Tr A-2\,\E[Z]=\int_0^T a(t)\,\diff t=D .
\end{equation}
Consequently, writing $\bar Z:=\sum_{j\ge0}\lambda_jN_j^2$,
\begin{equation}\label{eq:Zbar}
  \bar Z=2Z+D,\qquad \{|Z|\le\delta\}=%
  \{D- 2\delta \le \bar Z \le D + 2\delta\}.
\end{equation}
\end{lem}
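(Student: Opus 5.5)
The plan is to compute both sides of \eqref{eq:shiftid} explicitly from the kernel of $A$ and the second moment of $X$, and then read off \eqref{eq:Zbar} from the chaos representation \eqref{eq:Chisquare}. The first equality $\sum_j\lambda_j=\Tr A$ needs justification, because $A$ is only Hilbert--Schmidt a priori. For this we use the decomposition \eqref{eq:Adecomp}. The rank-one part $M_\omega RM_\omega$ is trace class. The Volterra part is $M_\omega J^*M_{-F'}JM_\omega$, which is a product of two Hilbert--Schmidt operators ($JM_\omega$ and its adjoint, with the bounded $M_{-F'}$ in between), since $F'\in L^\infty$ under Assumption~\ref{ass:drifts} (Lipschitz $a$, $\psi\in L^\infty$). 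Hence $A$ is trace class, and Lidskii's theorem (or simply the spectral theorem for self-adjoint trace-class operators) gives $\sum_j\lambda_j=\Tr A$. Since the kernel $k(s,t)=\E[D_sD_tZ]$ from \eqref{eq:DDZ} is continuous, Mercer-type reasoning for trace-class integral operators with continuous kernel gives
\[
\Tr A=\int_0^T k(t,t)\diff t .
\]

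Next we evaluate the diagonal of the kernel. Setting $s=t$ in \eqref{eq:DDZ} gives
\[
k(t,t)=u(t)+\int_t^T v(r)\,e^{2\int_t^r b_i}\diff r .
\]
Integrating over $t$ and applying Fubini,
\[
\Tr A=\int_0^T u(t)\diff t+\int_0^T v(r)\int_0^r e^{2\int_t^r b_i}\diff t\diff r
=\int_0^T u+\int_0^T v(r)\,\E_i[X_r^2]\diff r,
\]
where the last step uses \eqref{eq:OUsol}. On the other side, $\E[Z]=\tfrac12\int_0^T v\,\E_i[X_r^2]\diff r$, because the It\^o integral in \eqref{eq:Z} has mean zero. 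Therefore $\Tr A-2\E[Z]=\int_0^T u=\int_0^T a=D$ for the margin specialization $u=a$. As a consistency check, this matches the It\^o correction term $-\tfrac12\int_0^T a$ found in Remark~\ref{rem:itocorrection}.

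Finally, from \eqref{eq:Chisquare} we have $2Z=2\E[Z]+\sum_j\lambda_j N_j^2-\sum_j\lambda_j$. Trace-class summability ensures that $\sum_j|\lambda_j|<\infty$, so the two sums can be split legitimately. This yields $2Z=\bar Z-D$, and the event identity $\{|Z|\le\delta\}=\{D-2\delta\le\bar Z\le D+2\delta\}$ follows immediately.

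The main obstacle is the trace-class justification: we need to check that $F'$ is bounded so that the Volterra factorization gives a product of Hilbert--Schmidt operators, and that the continuous kernel allows the trace to be computed as the diagonal integral. We handle the latter by writing $A=B^*B'$ with explicit Hilbert--Schmidt kernels, and then computing the trace as $\langle B',B\rangle_{HS}$ directly. This reproduces the diagonal integral and avoids Mercer's theorem, which would require a positivity assumption that fails for indefinite $A$.
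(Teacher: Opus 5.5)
Your proposal is correct and follows essentially the paper's proof: trace class from $F'\in L^\infty$ via the factorization \eqref{eq:Adecomp}, $\mathrm{Tr}\,A=\int_0^T \E[D_tD_tZ]\diff t$, Fubini to produce $\int_0^T v\,\E_i[X_r^2]\diff r$, subtraction of $2\E[Z]$, and splitting the absolutely summable series. The only difference is that you check the diagonal-trace formula by hand, via $\mathrm{Tr}(B^*B')=\langle B',B\rangle_{\mathrm{HS}}$ for the Volterra part with the rank-one part treated separately, whereas the paper cites Brislawn (1991).
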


\begin{proof}
Applying the result of Brislawn (1991) \cite{brislawn1991}, we obtain
\begin{equation}
	\mathrm{Tr} A = \int_0^T \E[D_tD_tZ]\diff t,
\end{equation}
since $A$ is compact and trace class due to $F'\in L^{\infty}$.
From the Malliavin kernel of Section~\ref{ssec:chaos} at $s=t$,
\begin{equation*}
  \E[D_tD_tZ]=u(t)+\int_t^T v(r)\,\exp \left(2\int_t^r b_i (\tau) \diff \tau \right) \diff r .
\end{equation*}
Hence, by Fubini and $\E_i[X_r^2]=\int_0^r \exp \left(2\int_{\rho}^r b_i (\tau) \diff \tau \right)\diff \rho$,
\begin{equation*}
  \Tr A=\int_0^T u (t) \diff t
  +\int_0^T v(r)\Bigl(\int_0^r \exp \Bigl(2\int_t^r b_i (\tau) \diff \tau \Bigr)\diff t\Bigr)\diff r
  = D +\int_0^T v(r)\,\E_i[X_r^2]\,\diff r .
\end{equation*}
On the other hand $\E[Z]=\frac12\int_0^T v(r)\E_i[X_r^2]\diff r$ because the It\^o integral is centred.
Subtracting $2\E[Z]$ removes the second summand, leaving $\int_0^T u (t) \diff t$. Equation~\eqref{eq:Zbar} follows from $\bar Z=2(Z-\E Z)+\sum_j\lambda_j$ and $\sum_j\lambda_j=\Tr A$.
\end{proof}

The variable of interest is exactly a rescaled generalized chi-squared variable shifted by $- \int_0^T u(t) \diff t/2$. This is exactly the It\^o correction in Remark~\ref{rem:itocorrection}.
It is also the functional generalization of the constant-case shift by $- T\Delta/2$.
We first bound the density when the spectrum has an arbitrary sign pattern, requiring control of the positive part only.

Let $\bar Z=\sum_{j\ge0}\lambda_jN_j^2$ with $\sum_j|\lambda_j|<\infty$ and $\lambda_0:=\max_j\lambda_j>0$.
Order the \emph{positive} eigenvalues other than $\lambda_0$ as $\theta_1\ge\theta_2\ge\cdots>0$ (a finite or infinite list, possibly empty).
Let $n_+ \in \N_0 \cup \{\infty\}$ denote the number of the $\theta_k$'s.
Set
\begin{equation}\label{eq:PS}
  S_+:=\sum_{k\ge1}\theta_kN_k^2\ge0,\qquad
  S_-:=-\!\!\sum_{j:\lambda_j<0}\lambda_jN_j^2\ge0,\qquad
  P:=\lambda_0N_0^2+S_+.
\end{equation}
We have
\begin{align*}
	\bar Z=P-S_-
\end{align*}
with $P$, $S_-\ge0$ independent.

\begin{defi}[Block Ulyanov factor]
	For an integer $m \ge 2$ define
	\begin{equation}\label{eq:Pim}
		\Pi_m:=
		\begin{cases}
			0, & n_+=0,\\
			\Bigl(\displaystyle\prod_{j=2}^{m\wedge n_+}\frac{\theta_1}{\theta_j}\Bigr)^{1/2}
			\displaystyle\prod_{k> m}\Bigl(1-\frac{\theta_k}{\theta_1}\Bigr)^{-1/2},
			&  n_+ \ge 1,
		\end{cases}
	\end{equation}
	with the conventions that empty products equal $1$ and that $\Pi_m:=\infty$ if $\theta_{m+1}=\theta_1$. For $m=2$ this is the classical Ulyanov factor $\Pi$ in \cite{ulyanov1995, ulyanov1991}.
\end{defi}

Recall the elementary scale bound for a $\theta\chi^2_k$ ($\theta>0$) variable: for $k\ge1$ let
\begin{equation*}
	g_k(y;\theta):=\frac{y^{k/2-1}}{(2\theta)^{k/2}\Gamma(k/2)}e^{-y/(2\theta)}
\end{equation*}
denote the density.
Substituting $\tau=y/(2\theta)$, we obtain
\begin{equation}\label{eq:betam}
	\sup_{\theta>0}g_k(y;\theta)=\frac{\beta_k}{y}\quad\text{for every }y>0,
	\qquad
	\beta_k:=\frac{\sup_{\tau>0}\tau^{k/2}e^{-\tau}}{\Gamma(k/2)}=\frac{(k/2)^{k/2}e^{-k/2}}{\Gamma(k/2)} ,
\end{equation}
with $\beta_1=\frac{e^{-1/2}}{\sqrt{2\pi}}\le0.25$ and $\beta_2=e^{-1}\le0.37$. Set $\bar\beta_m:=\max_{1\le k\le m}\beta_k$.

\begin{lem}[Block Ulyanov density bound]\label{lem:blockulyanov}
	Let $m\ge2$ and $\Pi_m<\infty$, and suppose $n_+ \ge 1$. Then for all $y>0$,
	\begin{equation}\label{eq:blockulyanov}
		f_{S_+}(y)\le\Pi_m\; g_{m\wedge n_+}(y;\theta_1)
		\le\frac{\bar\beta_m\,\Pi_m}{y}.
	\end{equation}
\end{lem}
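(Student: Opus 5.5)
The plan is to split $S_+$ into a finite \emph{head}, whose density is compared directly with a $\theta_1\chi^2$ density, and a \emph{tail}, which is absorbed through an exponential moment. Set $m':=m\wedge n_+$ and write
\begin{equation*}
S_+=U+V,\qquad U:=\sum_{k=1}^{m'}\theta_kN_k^2,\qquad V:=\sum_{k>m'}\theta_kN_k^2 ,
\end{equation*}
with $U$ and $V$ independent, and $V\equiv0$ when $n_+\le m$. The second inequality in \eqref{eq:blockulyanov} is immediate from the scale bound \eqref{eq:betam}: $g_{m'}(y;\theta_1)\le\beta_{m'}/y\le\bar\beta_m/y$. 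So everything reduces to the first inequality.

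\textbf{Step 1 (head).} For each $k\le m'$ we have $\theta_k\le\theta_1$. A direct computation from the $\chi^2_1$ density gives
\begin{equation*}
g_1(y;\theta_k)=\Bigl(\frac{\theta_1}{\theta_k}\Bigr)^{1/2}g_1(y;\theta_1)\,e^{-\frac{y}{2\theta_k}+\frac{y}{2\theta_1}}\le\Bigl(\frac{\theta_1}{\theta_k}\Bigr)^{1/2}g_1(y;\theta_1).
\end{equation*}
All densities involved are nonnegative and the semigroup identity $g_1(\cdot;\theta_1)^{*m'}=g_{m'}(\cdot;\theta_1)$ holds. Convolving these pointwise bounds therefore yields
\begin{equation*}
f_U(y)\le\Bigl(\prod_{j=2}^{m'}\frac{\theta_1}{\theta_j}\Bigr)^{1/2}g_{m'}(y;\theta_1).
\end{equation*}
The factor for $k=1$ equals $1$. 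If $n_+\le m$, then $V=0$ and this already is the claim, including the case $n_+=1$ where $U=\theta_1N_1^2$ exactly.

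\textbf{Step 2 (tail).} If $n_+>m$, then $m'=m\ge2$, and $\Pi_m<\infty$ forces $\theta_k<\theta_1$ for all $k>m$. Since $\sum_k\theta_k<\infty$, $V$ is an a.s. convergent sum. Conditioning on $V$ gives $f_{S_+}(y)=\E\bigl[f_U(y-V)\indic(V<y)\bigr]$. For $0\le v<y$ and $m\ge2$, the exponent $m/2-1\ge0$ gives $(y-v)^{m/2-1}\le y^{m/2-1}$, and hence
\begin{equation*}
g_m(y-v;\theta_1)\le g_m(y;\theta_1)\,e^{v/(2\theta_1)} .
\end{equation*}
Taking expectations and using independence of the $N_k$,
\begin{equation*}
\E\bigl[e^{V/(2\theta_1)}\bigr]=\prod_{k>m}\Bigl(1-\frac{\theta_k}{\theta_1}\Bigr)^{-1/2}.
\end{equation*}
This product is finite because $\theta_k/\theta_1<1$ and $\sum_k\theta_k<\infty$; the infinite product is handled by monotone convergence over finite truncations. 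Combining with Step 1 yields $f_{S_+}(y)\le\Pi_m\,g_m(y;\theta_1)$.

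\textbf{Main obstacle.} The delicate point is Step 2. The monotonicity $(y-v)^{m/2-1}\le y^{m/2-1}$ is exactly what needs $m\ge2$; for $m=1$ the kernel is singular at $0$ and the tilt argument breaks down. This is why the head block must contain at least two terms, and why $m'=1$ occurs only when $n_+=1$, where the tail is empty. One must also justify that the exponential tilt at rate $1/(2\theta_1)$ is integrable, which is precisely the condition $\theta_{m+1}<\theta_1$ encoded in $\Pi_m<\infty$.
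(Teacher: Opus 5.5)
Your proposal is correct and follows essentially the same route as the paper: the head/tail split at $m'=m\wedge n_+$, the cost $\sqrt{\theta_1/\theta_j}$ for replacing $\theta_j$ by $\theta_1$, and the tilt bound $g_m(y-w;\theta_1)\le g_m(y;\theta_1)e^{w/(2\theta_1)}$ for $m\ge2$ combined with the moment generating function of the tail. The paper gets the $\sqrt{\theta_1/\theta_j}$ cost by repeatedly applying Lemma~\ref{lem-chidensity}, while you convolve pointwise $\chi^2_1$ density bounds, which is the same computation; the second inequality is handled via \eqref{eq:betam} exactly as in the paper.
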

\begin{proof}
	Write $m':= m \wedge n_+$ and $S_+ = \sum_{j=1}^{m'} \theta_j N_j^2 + \sum_{k > m'} \theta_k N_k^2$. Denote $S':= \sum_{k > m'} \theta_k N_k^2$.
	Replacing $\theta_j$ by $\theta_1$ for $j =2, \cdots, m'$ gives the multiplicative cost $\sqrt{\theta_1 / \theta_j}$ to the density via repeated application of Lemma~\ref{lem-chidensity}. Hence
	\begin{equation*}
		f_{S_+}(y)\le\prod_{j=2}^{m'} \Bigl(\frac{\theta_1}{\theta_j}\Bigr)^{1/2} f_{\theta_1\chi^2_{m'}+\sum_{k > m'} \theta_k N_k^2}(y).
	\end{equation*}
	If $S'=0$, then $f_{\theta_1\chi^2_{m'}}(y)=g_{m'}(y;\theta_1)$ and \eqref{eq:blockulyanov} follows.
	If $S'\neq0$, then $n_+>m\ge2$, so $m'=m\ge2$ and $y\mapsto y^{m/2-1}$ is nondecreasing. Hence, for $0\le w\le y$,
	\begin{equation*}
		g_m(y-w;\theta_1)=\frac{(y-w)^{m/2-1}}{(2\theta_1)^{m/2}\Gamma(m/2)}\,e^{-\frac{y-w}{2\theta_1}}
		\le\frac{y^{m/2-1}}{(2\theta_1)^{m/2}\Gamma(m/2)}\,e^{-\frac{y}{2\theta_1}}\,e^{\frac{w}{2\theta_1}}
		=g_m(y;\theta_1)\,e^{\frac{w}{2\theta_1}}.
	\end{equation*}
	By the convolution formula and the moment generating function of $S'$ at $\frac1{2\theta_1}$ (finite because $\theta_k/\theta_1\le\theta_{m+1}/\theta_1<1$ for $k>m$ and $\sum_k\theta_k<\infty$),
	\begin{equation*}
		f_{\theta_1\chi^2_m+S'}(y)=\int_0^yg_m(y-w;\theta_1)f_{S'}(w)\diff w
		\le g_m(y;\theta_1)\,\E\bigl[e^{S'/(2\theta_1)}\bigr]
		=g_m(y;\theta_1)\prod_{k>m}\Bigl(1-\frac{\theta_k}{\theta_1}\Bigr)^{-1/2} .
	\end{equation*}
	Multiplying the two products gives the first inequality of \eqref{eq:blockulyanov}; the second follows from the definition of $\beta$ in \eqref{eq:betam} and $\beta_{m'}\le\bar\beta_m$.
\end{proof}

We obtain the main tool for the density bound.
In contrast to a direct convolution analysis, no relation between the leading eigenvalue $\lambda_0$ and the evaluation point is required.

\begin{thm}[Scale-free density bound] \label{thm:density}
Let $\bar Z=\sum_{j\ge0}\lambda_jN_j^2$ with $\sum_j|\lambda_j|<\infty$ and $\lambda_0=\max_j\lambda_j>0$, and let $m\ge2$ be an integer with $\Pi_m<\infty$ as in \eqref{eq:Pim}. Then
\begin{equation}\label{eq:densitybound}
  f_{\bar Z}(x)\le\frac{2\,(\beta_1+\bar\beta_m\,\Pi_m)}{x}\qquad\text{for all }x>0 .
\end{equation}
If $\bar Z$ has no positive eigenvalue ($\lambda_0 = 0, \lambda_j \le 0$), then $\prob(\bar Z>0)=0$ and no density bound is needed on $(0,\infty)$.
\end{thm}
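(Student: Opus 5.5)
We split $\bar Z = P - S_-$ with $P=\lambda_0N_0^2+S_+$ and $S_-\ge0$ independent of $P$. The plan is to first bound the density of $P$ by $c/x$ on $(0,\infty)$, and then note that subtracting an independent nonnegative variable preserves a bound of the form $C/x$ on $(0,\infty)$.

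\textbf{Step 1 (density of $P$).} $P=\lambda_0N_0^2+S_+$ is a sum of two independent nonnegative variables. If $n_+=0$, then $P=\lambda_0 N_0^2$ and $f_P(y)=g_1(y;\lambda_0)\le\beta_1/y$ by \eqref{eq:betam}. Otherwise, we use that for independent nonnegative $U,V$ with $f_U(y)\le c_U/y$ and $f_V(y)\le c_V/y$, the convolution satisfies
\begin{equation*}
f_{U+V}(y)=\int_0^y f_U(y-w)f_V(w)\diff w .
\end{equation*}
Split the integral at $w=y/2$. On $\{w\le y/2\}$ we have $y-w\ge y/2$, so $f_U(y-w)\le 2c_U/y$; integrating $f_V$ over this range gives a contribution at most $2c_U/y$. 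Symmetrically, on $\{w\ge y/2\}$ we bound $f_V(w)\le 2c_V/y$ and integrate $f_U(y-w)$, which gives at most $2c_V/y$. Hence
\begin{equation*}
f_{U+V}(y)\le \frac{2(c_U+c_V)}{y}.
\end{equation*}
Apply this with $U=\lambda_0N_0^2$, $c_U=\beta_1$, and $V=S_+$, $c_V=\bar\beta_m\Pi_m$ from Lemma~\ref{lem:blockulyanov}. This gives
\begin{equation*}
f_P(y)\le \frac{2(\beta_1+\bar\beta_m\Pi_m)}{y}, \qquad y>0,
\end{equation*}
which is exactly the constant in \eqref{eq:densitybound}. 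For $n_+=0$ we have $\Pi_m=0$ and the bound holds trivially.

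\textbf{Step 2 (subtracting $S_-$).} For $x>0$, independence gives
\begin{equation*}
f_{\bar Z}(x)=\E\bigl[f_P(x+S_-)\bigr].
\end{equation*}
This holds whenever $S_-$ has an arbitrary law, including a point mass at $0$. Since $x+S_-\ge x>0$ and $y\mapsto 1/y$ is decreasing,
\begin{equation*}
f_P(x+S_-)\le \frac{2(\beta_1+\bar\beta_m\Pi_m)}{x+S_-}\le \frac{2(\beta_1+\bar\beta_m\Pi_m)}{x}.
\end{equation*}
Taking expectations yields \eqref{eq:densitybound}. The final claim is immediate: if all $\lambda_j\le0$, then $\bar Z=-S_-\le 0$ almost surely.

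\textbf{Main obstacle.} The delicate point is justifying the density manipulations for infinite sums. Convergence $\sum|\lambda_j|<\infty$ gives almost sure and $L^1$ convergence of the series, and $P$ has a density because $\lambda_0N_0^2$ does. Hence the convolution formula $f_{\bar Z}(x)=\int f_P(x+s)\,\prob_{S_-}(\diff s)$ is valid for any law of $S_-$, with no density assumption on $S_-$ or $S_+$. Lemma~\ref{lem:blockulyanov} provides $f_{S_+}$ directly when $n_+\ge1$. If Lemma~\ref{lem:blockulyanov} only gives the bound for finite truncations, we would pass to the limit via Fatou and weak convergence, using that the bounds are uniform in the truncation level.
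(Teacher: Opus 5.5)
Your proof is correct and is essentially the paper's argument: the same decomposition $\bar Z=P-S_-$, and the same split of the convolution of $\lambda_0N_0^2$ with $S_+$ at half the evaluation point, using \eqref{eq:betam} on one piece and Lemma~\ref{lem:blockulyanov} on the other, which yields the identical constant $2(\beta_1+\bar\beta_m\Pi_m)$. The only difference is organizational. You first bound $f_P(y)\le 2(\beta_1+\bar\beta_m\Pi_m)/y$ for all $y>0$ and then use $1/(x+S_-)\le 1/x$, whereas the paper bounds $\sup_{w\ge x}f_P(w)$ directly by splitting at $x/2$; your mixture formula $f_{\bar Z}(x)=\E[f_P(x+S_-)]$ also removes the need to distinguish whether $S_-$ has a density.
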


\begin{proof}
Fix $x>0$. Since $\lambda_0>0$, the variable $P$ in \eqref{eq:PS} has a density.
If $S_-\neq0$ it has a density as well and, by independence,
\begin{equation*}
  f_{\bar Z}(x)=\int_0^\infty f_P(x+y)\,f_{S_-}(y)\,\diff y\le \sup_{w\ge x}f_P(w),
\end{equation*}
since $f_{S_-}$ is supported on $[0,\infty)$ and integrates to $1$.
If $S_-=0$ then $f_{\bar Z}=f_P$ and the same bound holds trivially. It remains to bound $f_P(w)$ for $w\ge x$.

If $S_+ = 0$, using that $f_{\lambda_0N_0^2}$ is decreasing on $(0,\infty)$ and \eqref{eq:betam} with $k=1$,
\begin{align*}
f_P(w)
=f_{\lambda_0N_0^2}(w)\le f_{\lambda_0N_0^2}(x)\le\frac{\beta_1}{x} \qquad\text{for } w\ge x,
\end{align*}
which implies \eqref{eq:densitybound}.
Otherwise $S_+$ has a density. By independence,
\begin{equation*}
  f_P(w)=\int_0^w f_{\lambda_0N_0^2}(w-y)\,f_{S_+}(y)\,\diff y
  =\int_0^{x/2} f_{\lambda_0N_0^2}(w-y)\,f_{S_+}(y)\,\diff y +\int_{x/2}^{w} f_{\lambda_0N_0^2}(w-y)\,f_{S_+}(y)\,\diff y.
\end{equation*}
On $[0,x/2]$ we have $w-y\ge w-x/2\ge x/2$, so by monotonicity of the $\chi^2_1$ density and \eqref{eq:betam},
\begin{equation*}
  f_{\lambda_0N_0^2}(w-y)\le f_{\lambda_0N_0^2}(\frac{x}2)
  \le\frac{2\beta_1}{x}.
\end{equation*}
Thus, the integral over this interval is at most $2\beta_1/x$, since $f_{S_+}$ integrates to at most 1 there.
On $[x/2,w]$ we use Lemma~\ref{lem:blockulyanov}: $f_{S_+}(y)\le\bar\beta_m\Pi_m/y\le 2 \bar\beta_m\Pi_m/x$, and
$\int_{x/2}^w f_{\lambda_0N_0^2}(w-y)\diff y\le 1$.
Adding the two summands gives
\begin{equation*}
	f_P(w)\le \frac{2(\beta_1+\bar\beta_m\Pi_m)}{x}
\end{equation*}
for every $w\ge x$. The inequality holds for any $m$ such that $\Pi_m < \infty$.
\end{proof}

\begin{rem} \label{rem-lambdas}
	The assumption $\Pi_m < \infty$ for some $m$ is the essential hypothesis in the positive regime. It is introduced to ensure that we can effectively control the density of an infinite-dimensional sum of weighted chi-squared variables when estimating the upper bound via a finite-dimensional approximation.
	This condition is generally satisfied by the eigenvalues of typical Sturm--Liouville problems.
	Lemma~\ref{lem:ulyanov} verifies the finiteness of $\Pi_m$ for every integer $m \ge 2$ with $(m+1)^2>\frac{9}{4}\rho$ in the positive regime.

	The unbounded nature of the $\chi^2(1)$ distribution requires careful control over the range of $\varepsilon$ near the left endpoint of the support.
	In the positive regime, the overall bias causes the support of the variable $\bar{Z}$ to shift to $[- D, \infty)$.
	This shift leads to a possibly infinite density of $Z$ at the endpoint $- D/2$. To mitigate this issue, the range assumption $x \ge D/2$ is introduced to guarantee that the evaluation window of $\bar{Z}$ stays away from this endpoint.
\end{rem}

\begin{proof}[Proof of Margin Assumption (Proposition~\ref{prop:margin} and Proposition~\ref{prop:margininhomo})]
	We first prove Proposition~\ref{prop:margininhomo}.
	Fix a label $i$ and let $Z$ be the margin variable of \eqref{eq:M1}--\eqref{eq:M0} conditionally on $Y=i$.
	For all $\varepsilon \le D/64$,
	$\{|Z|\le16\varepsilon\}=\{|\bar Z-D|\le32\varepsilon\}$ with $\bar Z=\sum_j\lambda_j N_j^2$ and $D>0$ by Lemma~\ref{lem:shift}.
	According to the sign structure of the eigenvalues, we have two regimes.

	\emph{Regime $(+)$.} By Corollary~\ref{cor:posregime} all eigenvalues are positive. By Lemma~\ref{lem:ulyanov}, there exists $m \in \N$ such that $\Pi_m < \infty$.
	Theorem~\ref{thm:density} gives $f_{\bar Z}(x)\le c_d/D$ for $x \ge D/2$ with
	\begin{equation*}
		c_d = \max\{4 (\beta_1 + \bar{\beta}_m \Pi_m),1\}.
	\end{equation*}

	\emph{Regime $(-)$.} Here $-F' \le 0$ and Lemma~\ref{lem:negspec} shows
	$A$ has at most one positive eigenvalue.
	If $A$ has no positive eigenvalue, then $\bar Z\le0$ almost surely while $D-32\varepsilon\ge D/2>0$, so $M_i=0$ holds trivially.
	Otherwise $S_+=0$, $\Pi_m=0$, and Theorem~\ref{thm:density} gives $f_{\bar Z}(x)\le c_d/D$ for $x \ge D/2$ with $c_d = \max\{4\beta_1,1\}=1$ (independent of the Ulyanov factor).

	In both regimes, if $32\varepsilon\le D/2$, the interval $[D-32\varepsilon,D+32\varepsilon]$ lies in $[D/2,\infty)$ and has length $64\varepsilon$, so
	\begin{equation*}
		M_i \le 64 \varepsilon \sup_{x\ge D/2} f_{\bar Z}(x)
		\le 64 \varepsilon \cdot \frac{c_d}{D}
		=\frac{64 c_d \,\varepsilon}{D},
	\end{equation*}
	where $c_d$ depends only on $\rho$.
	Thus, letting $C_{\mathrm{margin}} : = 64c_d$ under Assumption~\ref{ass:drifts},
	\begin{equation*}
		\prob_X\Bigl(\bigl|\eta(X)-\frac12\bigr|\le\varepsilon\Bigr)
		\le\frac12M_1+\frac12M_0
		\le\frac{64 c_d \,\varepsilon}{D}
		= \frac{C_{\mathrm{margin}}\,\varepsilon}{D} .
	\end{equation*}
	In regime $(+)$, Theorem~\ref{thm:density} applies with every $m\ge 2$ such that $(m+1)^2 > \frac94 \rho$.
	Lemma~\ref{lem:ulyanov} bounds $\Pi_m \le B_m(\rho)$ with $B_m(\rho)$ the right-hand side of \eqref{eq:Pimbound}.
	Hence
	\begin{equation} \label{eq:Cmargindef}
		64c_d \le C_{\mathrm{margin}} (\rho) := 256 \inf_{m\ge 2: (m+1)^2 > \frac94 \rho} \bigl(\beta_1 + \bar\beta_m B_m(\rho) \bigr).
	\end{equation}
	For fixed $m$ the function $\rho \mapsto B_m(\rho)$ is increasing on $[1, \frac{4}{9}(m+1)^2)$.
	Consequently every $m$ admissible for $\rho_0$ is admissible for $\rho\le\rho_0$ with a smaller value, and $C_{\mathrm{margin}}(\rho)\le C_{\mathrm{margin}}(\rho_0)$ whenever $\rho\le\rho_0$.
	In regime $(-)$, $C_{\mathrm{margin}} = 64c_d= 64$ is a uniform constant.

	If instead $\varepsilon > D/64$, then $C_{\mathrm{margin}}\,\varepsilon/D\ge64\cdot\varepsilon/D>1$, so the bound in \eqref{eq:margininhomo} holds trivially. This proves Proposition~\ref{prop:margininhomo}.

	Proposition~\ref{prop:margin} is the constant-drift special case: Assumption~\ref{ass:drifts} holds with $a = \Delta = b_1 - b_0 > 0$, and Assumption~\ref{ass:drifts} holds in regime $(+)$ if $b_1 + b_0 < 0$ and in regime $(-)$ if $b_1 + b_0 \ge 0$ with $D = T \Delta > 0$.
	In regime $(+)$, Lemma~\ref{lem:ulyanov} applies with $\rho = \rho_i = \bigl(C_{\omega, i} / c_{\omega, i} \bigr)^4 = e^{4 | b_i | T} \le e^{4 LT}$.
	The resulting constant $C_{\mathrm{margin}}$ depends only on $LT$ for all pairs in $\mathcal{B} (L, \Delta_0)$.
\end{proof}

\subsection{Control of the regression estimation error}\label{ssec:estimation}
We now prove the upper bound of the estimation error using the concentration inequality for our variable $Z$.
We first collect the required norm identities for the spectral expansion in the estimation error.

\begin{lem} \label{lem:chaosnorms}
	Conditionally on $Y=i$, let $\lambda = (\lambda_j)_{j \ge 1}$ denote the eigenvalues of the operator $A$ of the corresponding variable.
	\begin{enumerate}
		\item[\textup{(i)}] For $Z_1=\int_0^T u(t) X_t\diff W_t$ with $u \in L^2([0,T])$:
		$\E[Z_1]=0$ and
		\begin{equation*}
			\Vert\lambda\Vert_2^2=2\var(Z_1)=2\int_0^T u(t)^2\,\E_i[X_t^2]\diff t=2\Vert u \Vert_{X,i}^2,
			\qquad \Vert\lambda\Vert_\infty\le\Vert\lambda\Vert_2=\sqrt2\,\Vert u\Vert_{X,i} .
		\end{equation*}
		\item[\textup{(ii)}] For $Z_2=\int_0^T h(t)X_t^2\diff t$ with $h \in L^1([0,T])$: the operator $A$ (built with $u=0$, $v=2h$ in $Z$) is trace class with
		\begin{equation*}
			\E[Z_2]=\int_0^Th(t)\,\E_i[X_t^2]\diff t,
			\qquad
			\Vert\lambda\Vert_1=\sum_j|\lambda_j|\le2\int_0^T|h(t)|\,\E_i[X_t^2]\diff t,
		\end{equation*}
		and $\Vert\lambda\Vert_\infty\le\Vert\lambda\Vert_2\le\Vert\lambda\Vert_1$.
	\end{enumerate}
\end{lem}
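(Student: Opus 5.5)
For both parts I would use the chaos representation of Section~\ref{ssec:chaos}. It gives $Z=\E[Z]+\frac12\sum_j\lambda_j(N_j^2-1)$ with $\Vert\lambda\Vert_2^2=\Vert \E[D_sD_tZ]\Vert^2_{L^2([0,T]^2)}$, and I would read off the needed norms directly from the kernels \eqref{eq:DZ}--\eqref{eq:DDZ}.

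\emph{Part (i).} I would set $v=0$. Then $\E[Z_1]=0$, because the It\^o integral of the square-integrable adapted integrand $uX$ is centred. Itô's isometry gives $\var(Z_1)=\int_0^T u(t)^2\E_i[X_t^2]\diff t=\Vert u\Vert_{X,i}^2$. The same variance computed from \eqref{eq:Chisquare} is $\frac12\sum_j\lambda_j^2$, because the $N_j$ are independent and $\var(N_j^2-1)=2$. Comparing the two expressions gives $\Vert\lambda\Vert_2^2=2\Vert u\Vert_{X,i}^2$. The bound $\Vert\lambda\Vert_\infty\le\Vert\lambda\Vert_2$ is immediate.

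\emph{Part (ii).} I would take $u=0$ and $v=2h$, so that $Z=Z_2$. The mean formula follows from Fubini, since $\E_i[X_t^2]$ is bounded and $h\in L^1$. For trace class, I would write $Z_2=\int_0^T h(t)X_t^2\diff t$ with $X_t=\int_0^T k_t(s)\diff W_s$ and $k_t(s)=\indic(s\le t)\exp(\int_s^t b_i)$. Then $X_t^2-\E X_t^2=I_2(k_t\otimes k_t)$, so the kernel is $\E[D_sD_tZ]=2\int_0^T h(r)k_r(s)k_r(t)\diff r$, which agrees with \eqref{eq:DDZ}. The corresponding operator is therefore $A=2\int_0^T h(r)\,k_r\otimes k_r\diff r$, a Bochner integral of rank-one operators. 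It converges in trace norm because $\Vert k_r\otimes k_r\Vert_{1}=\Vert k_r\Vert_{L^2}^2=\E_i[X_r^2]$ and $h\in L^1$. Hence $A$ is trace class with
\begin{equation*}
\Vert\lambda\Vert_1=\Vert A\Vert_{1}\le 2\int_0^T|h(r)|\,\E_i[X_r^2]\diff r.
\end{equation*}
Finally, $\Vert\lambda\Vert_\infty\le\Vert\lambda\Vert_2\le\Vert\lambda\Vert_1$ holds for any sequence.

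The only delicate step is justifying that the Bochner integral defines the same operator as the kernel and that the trace norm is subadditive under the integral. I would check this by computing $\langle Ag,g'\rangle$ via Fubini and then invoking completeness of the trace class.
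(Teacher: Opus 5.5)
Your proposal is correct and is essentially the paper's argument. Part (i) is identical. In part (ii) your operator $A=2\int_0^T h(r)\,k_r\otimes k_r\diff r$ is exactly what the paper builds: its factor $S_\pm=M_{\sqrt{2h_\pm}\,\omega^{-1}}JM_\omega$ acts as $g\mapsto\sqrt{2h_\pm(r)}\,\langle k_r,g\rangle$, and $A_\pm=S_\pm^*S_\pm$.

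The only difference is in the step you flag as delicate. The paper splits $h=h_+-h_-$ and uses that $S_\pm$ is Hilbert--Schmidt, so trace class and $\mathrm{Tr}\,A_\pm=\Vert S_\pm\Vert_{\mathrm{HS}}^2=2\int_0^T h_\pm(r)\,\E_i[X_r^2]\diff r$ come directly, without the Bochner-integral measurability and identification checks. Your route instead handles the sign of $h$ without splitting.
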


\begin{proof}
	(i) The It\^o integral is centred, and by the It\^o isometry $\var(Z_1)=\int_0^T u^2\E_i[X_t^2]$. On the other hand $\var(Z_1)=\frac12\Vert\lambda\Vert_2^2$ by the chi-squared expansion of $Z$ in \eqref{eq:Chisquare}.

	(ii) $\E[Z_2]=\int h\E_i[X_t^2]$ holds by Fubini.
	Split $h=h_+-h_-$ with $h_+ = \max \{h, 0\}$ and $h_- = \max\{-h, 0\}$. Let $A_\pm$ be the operators with kernels built from $v=2h_\pm$ and $u=0$, i.e.\ with kernel $\omega(s)\omega(t)\int_{s\vee t}^T2h_\pm(r)\omega(r)^{-2}\diff r$.
	Setting $S_\pm:=M_{\sqrt{2h_\pm}\,\omega^{-1}}JM_\omega$, which is Hilbert--Schmidt with kernel $\sqrt{2h_\pm(t)}\,\omega(t)^{-1}\indic (s\le t)\,\omega(s)$,
	and $A_\pm =S_\pm^{*}S_\pm\succeq0.$
	So $A_\pm$ is positive semidefinite and trace class with
	\begin{equation*}
		\Tr A_\pm=\bigl\Vert S_\pm\bigr\Vert_{\HS}^2
		=\int_0^T2h_\pm(r)\,\omega(r)^{-2}\int_0^r\omega(s)^2\diff s\diff r
		=2\int_0^Th_\pm(r)\,\E_i[X_r^2]\diff r<\infty,
	\end{equation*}
	where the last equality absorbs the factor $\omega(r)^{-2}$ via the identity
	\begin{equation*}
		\E_i[X_r^2]=\int_0^r e^{2\int_s^rb_i(\tau)\diff\tau}\diff s
	=\omega(r)^{-2}\int_0^r\omega(s)^2\diff s.
	\end{equation*}
	Hence $A=A_+-A_-$ is trace class with trace norm $\Vert A\Vert_1\le\Tr A_++\Tr A_-=2\int|h|\E_i[X^2]$, and $\sum_j|\lambda_j|\le\Vert A\Vert_1$.
\end{proof}

Note that the derivation of these bounds does not use the differentiability of the coefficients or the sign of the eigenvalues: parts (i) and (ii) hold for arbitrary $u \in L^2$, $h\in L^1$, in particular for the estimation errors $\hat b_i-b_i$ given $\Samp_n$.

The general estimation lemma is the following.
\begin{lem}[General estimation error bound]\label{lem:est-general}
	Let $j\in\{0,1\}$ and let $(\phi^{(j)}_N)_{N\in\N}$ be a sequence with $\phi^{(j)}_N\le\delta_n$ for all $N\ge n/4$, for some $\delta_n>0$.
	Assume that the estimators $\hat b_0,\hat b_1$ are built from the subsamples with the same label and satisfy
	\begin{equation}\label{eq:est-general-cond}
		\prob_{\Samp_n} \Bigl(\bigl\Vert\hat b_i-b_i\bigr\Vert_{X,j}\ge\phi^{(j)}_{n_i}\;\Big|\;N_i=n_i\Bigr)\le\frac{C_s}{n_i},\qquad i\in\{0,1\},\ n_i\in\N .
	\end{equation}
	Set $\delta_{\mathrm{cross},j}:=\delta_n^2+2\delta_n\Vert a\Vert_{X,j}$ with $a=b_1-b_0$. Then for $n\ge27$ and every $0<\varepsilon\le3/4$ with $\varepsilon\ge12\, \delta_{\mathrm{cross},j}$,
	\begin{equation}\label{eq:est-general}
		\begin{aligned}
		\prob_{\Samp_n,b_j}\bigl(|\widehat{\eta}(X)-\eta(X)|>\varepsilon \bigr)
		&\le\frac{2+8C_s}{n}
		+4\exp\Bigl(-\frac{\varepsilon^2}{288\,\delta_n^2+24 \sqrt{2}\,\delta_n\,\varepsilon}\Bigr)\\
		&\quad+\exp\Bigl(-\frac{\varepsilon^2}{576\,\delta_n^4+48\,\delta_n^2\,\varepsilon}\Bigr)
		+2\exp\Bigl(-\frac{\varepsilon^2}{576\,\delta_{\mathrm{cross},j}^2+48\,\delta_{\mathrm{cross},j}\,\varepsilon}\Bigr) .
		\end{aligned}
	\end{equation}
	If moreover
	\begin{equation}\label{eq:est-threshold}
		\varepsilon\ge\max\bigl\{48 \sqrt{2}\,\delta_n\log n,\;96\,\delta_n^2\log n,\;96\,\delta_{\mathrm{cross},j}\log n\bigr\},
	\end{equation}
	then each exponential term in \eqref{eq:est-general} is at most $1/n$ and
	\begin{equation*}
		\prob_{\Samp_n,X}\bigl(|\widehat{\eta}(X)-\eta(X)|>\varepsilon\;\big|\;Y=j\bigr)\le\frac{9+8C_s}{n} .
	\end{equation*}
\end{lem}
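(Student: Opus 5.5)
The plan is to start from Lemma~\ref{lem:reduceest}, restricted to the label $Y=j$. Conditionally on $Y=j$ the error probability is at most the sum of the two terms $E_{jj}$ and $E_{ij}$ ($i\neq j$), each with threshold $\varepsilon/6$. Lemma~\ref{lem:exp} is already applied there and requires $\varepsilon\le 3/4$.

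The first step is to handle the randomness of the sample. I condition on $\Samp_n$ and work on the good event
\[
G:=\bigcap_{i}\{\Vert\hat b_i-b_i\Vert_{X,j}<\phi^{(j)}_{N_i}\}\cap\{N_0,N_1\ge n/4\}.
\]
\begin{itemize}
\item The complement of $\{N_i\ge n/4\}$ is controlled by Hoeffding's inequality for $N_i\sim\mathrm{Bin}(n,1/2)$. This gives $\exp(-n/8)$ per label, which is at most $1/n$ for $n\ge27$; I would check the numerical constant here.
\item On $\{N_i=n_i\}$ with $n_i\ge n/4$, condition \eqref{eq:est-general-cond} bounds the failure probability by $C_s/n_i\le 4C_s/n$. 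Summed over both labels this contributes $8C_s/n$.
\end{itemize}
Together these give the term $(2+8C_s)/n$. On $G$ we have $\phi^{(j)}_{N_i}\le\delta_n$, hence $\Vert\hat b_i-b_i\Vert_{X,j}\le\delta_n$. Since the estimators are independent of the test path, $u=\hat b_i-b_i$ and $v$ can then be treated as deterministic.

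The second step splits each $Z$ into three pieces: the It\^o part $Z_1=\int u X\diff W$, the squared part $\tfrac12\int u^2X^2$, and, for $E_{ij}$, the cross part $\int (b_j-b_i)uX^2$. Each piece gets a fraction of the threshold, for instance $\varepsilon/12$ for the It\^o part and $\varepsilon/24$ for the others; the split must match the constants $288$, $576$ and $48$ in \eqref{eq:est-general}.
\begin{itemize}
\item By Lemma~\ref{lem:chaosnorms}(i), the It\^o part is a centred second-chaos variable with $\Vert\lambda\Vert_2=\sqrt2\Vert u\Vert_{X,j}\le\sqrt2\delta_n$, and $\Vert\lambda\Vert_\infty$ obeys the same bound.
\item By Lemma~\ref{lem:chaosnorms}(ii), the squared part has mean and $\Vert\lambda\Vert_1$ bounded by $\int u^2\E_j X^2=\Vert u\Vert_{X,j}^2\le\delta_n^2$.
\item The cross part has mean and $\Vert\lambda\Vert_1$ at most $2\int|a||u|\E_jX^2\le 2\Vert a\Vert_{X,j}\delta_n$ by Cauchy--Schwarz in the $\Vert\cdot\Vert_{X,j}$ inner product. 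Hence its total size is at most $\delta_{\mathrm{cross},j}$.
\end{itemize}
The assumption $\varepsilon\ge12\delta_{\mathrm{cross},j}$ ensures that the means, which are at most $\delta_n^2$ and $2\delta_n\Vert a\Vert_{X,j}$, take up at most half of the allotted thresholds. The centred remainders are then bounded by the Bernstein-type (Laurent--Massart / Hanson--Wright) inequality for $\sum\lambda_k(N_k^2-1)$,
\[
\prob\bigl(|\textstyle\sum\lambda_k(N_k^2-1)|>t\bigr)\le 2\exp\Bigl(-\frac{t^2}{c\Vert\lambda\Vert_2^2+c'\Vert\lambda\Vert_\infty t}\Bigr).
\]
This produces the three exponential families of \eqref{eq:est-general}. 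The $\delta_n$-type term appears twice, once from $E_{jj}$ and once from $E_{ij}$, which explains its prefactor $4$.

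The final step is to insert the thresholds \eqref{eq:est-threshold}. Each denominator is then at most $2\cdot(\text{linear term})$, since the quadratic term is dominated when $\log n\ge1$. As a result each exponent is at least $\log n$, so each exponential is at most $1/n$. The seven exponential terms add $7/n$, giving $(9+8C_s)/n$ in total.

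The main obstacle I expect is bookkeeping rather than ideas. The constants must come out exactly as stated, which requires choosing the right threshold split between the means and the fluctuations and the right form of the chi-square Bernstein inequality.
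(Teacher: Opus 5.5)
Your skeleton matches the paper's proof:
\begin{itemize}
\item the good event built from Hoeffding's bound ($e^{-n/8}\le 1/n$ for $n\ge27$) and the conditional estimator bound ($C_s/n_i\le 4C_s/n$);
\item conditioning on $\Samp_n$, so that the coefficients become deterministic;
\item Lemma~\ref{lem:chaosnorms} for the spectral norms and Lemma~\ref{lem:LM} for the tails;
\item the final substitution of \eqref{eq:est-threshold}, which works as you describe.
\end{itemize}
The gap is your three-way split of $\Lambda_i$, $i\neq j$, into the It\^o part, the squared part $-\frac12\int_0^T u_i^2X_t^2\diff t$ and the cross part $\pm\int_0^T a\,u_iX_t^2\diff t$. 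It fails in two ways.

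First, it produces eight exponentials, not seven. Besides the four It\^o terms and the one-sided squared term of $E_{jj}$, the squared part of $E_{ij}$ gives another one-sided term and the cross part a two-sided one. So neither the prefactors in \eqref{eq:est-general} nor the final $(9+8C_s)/n$ come out.

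Second, your centring claim is false under the stated hypothesis. The two means are at most $\frac12\Vert u_i\Vert_{X,j}^2\le\frac12\delta_n^2$ and $\Vert a\Vert_{X,j}\Vert u_i\Vert_{X,j}\le\delta_n\Vert a\Vert_{X,j}$; the values you quote are the trace-norm bounds. The condition $\varepsilon\ge12\,\delta_{\mathrm{cross},j}$ only says that their \emph{sum} is at most $\varepsilon/24$. Either mean alone can come arbitrarily close to $\varepsilon/24$: take $\Vert a\Vert_{X,j}\ll\delta_n$, or $\delta_n\ll\Vert a\Vert_{X,j}$ with $u_i$ a multiple of $a$. With thresholds $\varepsilon/24$, the residual deviation level is then not of order $\varepsilon$. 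No fixed split of the $\varepsilon/12$ budget repairs this, because each piece would need a threshold exceeding $\varepsilon/24$ by a fixed fraction of $\varepsilon$.

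The fix is not a better split but no split. Keep $h_i=u_i\bigl((b_j-b_i)-\frac{u_i}{2}\bigr)=-\frac12u_i^2\pm a\,u_i$ as a single $L^1$ coefficient.
\begin{itemize}
\item Cauchy--Schwarz in $L^2(\E_j[X_t^2]\diff t)$ gives $\int_0^T|h_i|\,\E_j[X_t^2]\diff t\le\frac12\delta_{\mathrm{cross},j}$.
\item For $Z_2=\int_0^Th_iX_t^2\diff t$, Lemma~\ref{lem:chaosnorms}(ii) then gives $|\E_jZ_2|\le\frac12\delta_{\mathrm{cross},j}\le\varepsilon/24$ and $\Vert\lambda\Vert_\infty\le\Vert\lambda\Vert_2\le\Vert\lambda\Vert_1\le\delta_{\mathrm{cross},j}$.
\item Hence $|Z_2|>\varepsilon/12$ forces $\bigl|\sum_k\lambda_k(N_k^2-1)\bigr|>\varepsilon/12$.
\item Lemma~\ref{lem:LM} then gives exactly $2\exp\bigl(-\varepsilon^2/(576\,\delta_{\mathrm{cross},j}^2+48\,\delta_{\mathrm{cross},j}\,\varepsilon)\bigr)$.
\end{itemize}
This is why $\delta_{\mathrm{cross},j}$ contains the summand $\delta_n^2$.

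In $E_{jj}$, the quadratic part $-\frac12\int_0^Tu_j^2X_t^2\diff t\le0$ likewise receives the whole $\varepsilon/12$. Its tail is one-sided, and $\varepsilon\ge12\delta_n^2$ keeps its mean below $\varepsilon/24$, which yields the single $\delta_n^4$-term. The count is then $4+1+2=7$ exponentials, and your last step gives $(9+8C_s)/n$.
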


\begin{proof}
Let $N_i$ denote the size of the subset in $ \Samp_n $ with label $i$. Then $N_1$ is Binomial$(n,1/2)$ distributed and $N_0=n-N_1$.
By Hoeffding's inequality, for $n\ge27$
\begin{equation*}
	\prob(N_i<n/4)\le e^{-n/8} \le 1/n.
\end{equation*}
Indeed $n/8\ge\log n$ for $n\ge27$.
By \eqref{eq:est-general-cond} and conditioning on the value of $N_i$,
\begin{equation*}
	\prob_{\Samp_n}\Bigl(\bigl\Vert\hat b_i-b_i\bigr\Vert_{X,j}\ge\phi^{(j)}_{N_i},\;N_i\ge \frac n4\Bigr)
	=\!\!\sum_{n_i\ge n/4}\!\prob_{\Samp_n}\Bigl(\bigl\Vert\hat b_i-b_i\bigr\Vert_{X,j}\ge\phi^{(j)}_{n_i}\Big|N_i=n_i\Bigr)\prob(N_i=n_i)
	\le \frac{4C_s}{n}.
\end{equation*}
Define the event
\begin{equation*}
	\mathcal A:=\Bigl\{N_0\ge\frac{n}{4},\;N_1\ge\frac{n}{4}\Bigr\}\cap\Bigl\{\bigl\Vert\hat b_i-b_i\bigr\Vert_{X,j}\le\phi^{(j)}_{N_i}\ \text{for }i=0,1\Bigr\}.
\end{equation*}
We have
$\prob(\mathcal A^c)\le 2/n+8C_s/n$ and $\Vert\hat b_i-b_i\Vert_{X,j}\le\delta_n$ for both $i$ on $\mathcal A$. Note that $\mathcal A$ is $\sigma(\Samp_n)$-measurable and independent of the test pair $(X,Y)$.

By Lemma~\ref{lem:exp} as in Section~\ref{ssec:reduction}, for $0<\varepsilon\le3/4$,
\begin{equation*}
	\prob_{\Samp_n, b_j}\bigl(|\widehat{\eta}-\eta|>\varepsilon \bigr)
	\le\prob_{\Samp_n, b_j}(\mathcal A^c)+\sum_{i=0,1}\prob_{\Samp_n, b_j}\Bigl(\bigl|\Lambda_i\bigr|>\frac\varepsilon6,\;\mathcal A \Bigr),
\end{equation*}
where, by \eqref{eq:logratio-general} with $u_i:=\hat b_i-b_i$,
\begin{equation*}
	\Lambda_i:=\log\frac{p_{\hat b_i}}{p_{b_i}}(X)
	=\int_0^T u_i(t)X_t\diff W_t+\int_0^Th_i(t)X_t^2\diff t,
	\qquad h_i:=u_i\bigl((b_j-b_i)-\frac{u_i}2\bigr) .
\end{equation*}
For $i=j$ we have $h_j=-u_j^2/2 \le 0$; for $i\neq j$, $h_i=- u_i^2/2\pm a\,u_i$, where $a = b_1 - b_0$. Conditionally on $Y=j$, we split the probability
\begin{equation*}
	\prob_{\Samp_n, b_j}\Bigl(\bigl|\Lambda_i\bigr|>\frac\varepsilon6,\;\mathcal A \Bigr) \le \prob_{\Samp_n, b_j} \left(\left\vert \int_0^T u_i X_t \diff W_t \right\vert > \frac{\varepsilon}{12} , \mathcal{A}\right) + \prob_{\Samp_n, b_j} \left(  \left\vert \int_0^T h_i(t) X_t^2 \diff t \right\vert > \frac{\varepsilon}{12} , \mathcal{A} \right).
\end{equation*}

Conditionally on $\Samp_n$, the coefficients $u_i$, $h_i$ are deterministic, and the test path $X$ (given $Y=j$) is an OU path independent of $\Samp_n$. Each tail probability is therefore computed for deterministic coefficients and then integrated over $\Samp_n$ restricted to $\mathcal A$. Since each bound is a monotone function of $\Vert u_i\Vert_{X,j}$, it holds uniformly on $\mathcal A$ with $\Vert u_i\Vert_{X,j} \le \delta_n$.

The stochastic term satisfies $\int u_iX\diff W=\frac12\sum_j\lambda_j(N_j^2-1)$ with $\Vert\lambda\Vert_2^2=2\Vert u_i\Vert_{X,j}^2\le2\delta_n^2$ and $\Vert\lambda\Vert_\infty\le\sqrt2\delta_n$ by \eqref{eq:Chisquare}.
Lemma~\ref{lem:LM} at $t=\varepsilon/6$ gives
\begin{equation*}
	\begin{aligned}
	\prob_{b_j}\Bigl(\Bigl|\int_0^T u_i X_t\diff W_t\Bigr|>\frac\varepsilon{12}\Bigr)
	& = \prob_{b_j}\Bigl(\Bigl|\sum\lambda_j(N_j^2-1)\Bigr|>\frac\varepsilon6\Bigr) \\
	& \le 2\exp\Bigl(-\frac{\varepsilon^2/36}{8\delta_n^2+\frac{4\sqrt2}{6}\delta_n\varepsilon}\Bigr) \\
	& \le 2\exp\Bigl(-\frac{\varepsilon^2}{288\delta_n^2+24\sqrt{2}\,\delta_n\varepsilon}\Bigr).
	\end{aligned}
\end{equation*}
This term appears twice, once for each of $i=0$ and $i=1$. Thus, there are in total $4$ exponentials of this type in the final bound.

For the quadratic term with $i = j$, we have $\int_0^T h_jX^2=-\frac12 \int_0^Tu_j^2 X^2\le0$, so the tail is one-sided.
Write
\begin{equation*}
Q:=\int u_j^2X_t^2\diff t=\E Q+\frac12\sum\lambda_k(N_k^2-1).
\end{equation*}
By Lemma~\ref{lem:chaosnorms}(ii) (with $h=u_j^2 \ge 0$),
\begin{equation*}
	\E_j Q=\Vert u_j\Vert_{X,j}^2\le\delta_n^2, \quad \Vert\lambda\Vert_\infty\le\Vert\lambda\Vert_2\le\Vert\lambda\Vert_1\le2\delta_n^2.
\end{equation*}
Since $\varepsilon \ge 12\delta_{\mathrm{cross},j}\ge12\delta_n^2\ge 12\,\E_j Q$, the event $\{\frac12Q>\varepsilon/12\}$ implies $\sum\lambda_k(N_k^2-1)=2(Q-\E_j Q)>\varepsilon/3-2\delta_n^2\ge \varepsilon/6$. The tail bound in Lemma~\ref{lem:LM} gives
\begin{equation*}
	\prob_{b_j}\Bigl(\frac12\int u_j^2X_t^2\diff t>\frac\varepsilon{12}\Bigr)
	\le\exp\Bigl(-\frac{\varepsilon^2/36}{16\delta_n^4+\frac{8}{6}\delta_n^2\varepsilon}\Bigr)
	\le\exp\Bigl(-\frac{\varepsilon^2}{576\delta_n^4+48\delta_n^2\varepsilon}\Bigr) .
\end{equation*}

For the cross term where $i \neq j$, $h_i=- u_i^2/2 \pm a u_i$. By the Cauchy--Schwarz inequality in weighted space $L^2(\E_j[X_t^2]\diff t)$,
\begin{equation*}
	\int_0^T|h_i|\,\E_j[X_t^2]\diff t\le\frac12\Vert u_i\Vert_{X,j}^2+\Vert a\Vert_{X,j}\Vert u_i\Vert_{X,j}
	\le\frac12\delta_n^2+\delta_n\Vert a\Vert_{X,j}=\frac12 \delta_{\mathrm{cross},j} .
\end{equation*}
Write $Z_2:=\int h_iX^2=\E_j Z_2+\frac12\sum\lambda_k(N_k^2-1)$.
We have $|\E_j Z_2|\le\frac12\delta_{\mathrm{cross},j}$ and $\Vert\lambda\Vert_\infty\le\Vert\lambda\Vert_2\le\Vert\lambda\Vert_1\le \delta_{\mathrm{cross},j}$ by Lemma~\ref{lem:chaosnorms}. Analog to the case $i=j$, the event $\{|Z_2|>\varepsilon/12\}$ implies $|\sum\lambda_k(N_k^2-1)|>2(\varepsilon/12-\delta_{\mathrm{cross},j}/2)\ge\varepsilon/6-\varepsilon/12=\varepsilon/12$. Lemma~\ref{lem:LM} gives
\begin{equation*}
	\prob_{b_j}\Bigl(\Bigl|\int_0^Th_iX_t^2\diff t\Bigr|>\frac\varepsilon{12}\Bigr)
	\le2\exp\Bigl(-\frac{\varepsilon^2/144}{4\delta_{\mathrm{cross},j}^2+\frac{4}{12}\delta_{\mathrm{cross},j}\varepsilon}\Bigr)
	=2\exp\Bigl(-\frac{\varepsilon^2}{576\delta_{\mathrm{cross},j}^2+48\,\delta_{\mathrm{cross},j}\varepsilon}\Bigr) .
\end{equation*}

Adding $\prob(\mathcal A^c)\le(2+8C_s)/n$ with all the exponential terms above gives \eqref{eq:est-general}.

For the threshold claim, note first that \eqref{eq:est-threshold} implies $\varepsilon \ge 12\delta_{\mathrm{cross},j}$, and that $\log n\ge1$ for $n \ge 27$.
From $\varepsilon\ge48\sqrt2\,\delta_n\log n$ we get, using $\log^2 n \ge \log n$,
\begin{equation*}
	288\,\delta_n^2+24\sqrt2\,\delta_n\varepsilon
	\le\Bigl(\frac{288}{4608}+\frac{24\sqrt2}{48\sqrt2}\Bigr)\frac{\varepsilon^2}{\log n}
	=\frac{9}{16}\,\frac{\varepsilon^2}{\log n},
\end{equation*}
so the exponent of the stochastic term is at least $\frac{16}{9}\log n\ge\log n$. From $\varepsilon\ge96\,\delta_n^2\log n$,
\begin{equation*}
	576\,\delta_n^4+48\,\delta_n^2\varepsilon
	\le\Bigl(\frac{576}{9216}+\frac{48}{96}\Bigr)\frac{\varepsilon^2}{\log n}
	=\frac{9}{16}\,\frac{\varepsilon^2}{\log n},
\end{equation*}
and the same computation applies to the cross term with $\delta_{\mathrm{cross},j}$ in place of $\delta_n^2$.
Hence each of the $7$ exponential terms is at most $e^{-\log n}=1/n$, whence the bound $(9+8C_s)/n$.
\end{proof}

\begin{proof}[Proof of Lemma~\ref{lem-est}]
	Under Assumption~\ref{ass:est}, on $\{\sqrt{C_{\mathrm{est},j}}|\hat b_i-b_i|\le K_0 \sqrt{\log N_i/N_i}\}$ we have
	\begin{equation*}
	\Vert\hat b_i-b_i\Vert_{X,j} \le K_0 \sqrt{\frac{\log N_i}{N_i}}.
	\end{equation*}
	So \eqref{eq:est-general-cond} holds with $\phi^{(j)}_N=K_0 \sqrt{\log N/N}$ for both $j \in \{0,1\}$. The map $N\mapsto\sqrt{\log N/N}$ is nonincreasing for $N\ge3$, and for $N\ge n/4$ (with $n\ge27$)
	\begin{equation*}
		\phi^{(j)}_N\le K_0 \sqrt{\frac{\log (n/4)}{n/4}}=2 K_0 \sqrt{\frac{\log n-\log4}{n}} \le 2 K_0 \sqrt{\frac{\log n}{n}}=:\delta_{n} .
	\end{equation*}
	Moreover $\Vert a\Vert_{X,j}=\Delta\Vert\mathbf1\Vert_{X,j}=\Delta\sqrt{C_{\mathrm{est},j}}$, so
	$\delta_{\mathrm{cross},i}=\delta_n^2+2\delta_n\sqrt{C_{\mathrm{est},i}}\,\Delta$.
	We apply Lemma~\ref{lem:est-general} with conditioning label $j=i$.
	The choice of $\varepsilon_i$ in \eqref{eq:choiceeps} satisfies the threshold \eqref{eq:est-threshold}.
	\begin{itemize}
		\item the first summand of $\delta_{i}$ contributes exactly $96\sqrt{2}K_0\,(\log n)^{3/2}n^{-1/2}=48\sqrt2\,\delta_{n}\log n$;
		\item the second summand dominates $96\,\delta_{n}^2 \log n=384 K_0^2 (\log n)^{2}n^{-1}$ since $\log n\le n$;
		\item the second and third summands together dominate
		\begin{equation*}
		96\,\delta_{\mathrm{cross},i}\log n=384\,K_0 \bigl(K_0(\log n)^{2}n^{-1}+\sqrt{C_{\mathrm{est},i}} \Delta\,(\log n)^{3/2}n^{-1/2}\bigr).
		\end{equation*}
	\end{itemize}
	Lemma~\ref{lem:est-general} yields the claim.
\end{proof}

\begin{proof}[Proof of Lemma~\ref{lem-estinhomo}]
	Under Assumption~\ref{ass:estinhomo}, \eqref{eq:est-general-cond} holds with $\phi^{(j)}_N=K_s (\log N / N)^{s/(2s+1)}$ for both $j$. The map $N\mapsto\phi_N$ is nonincreasing for $N \ge 3$. For $n \ge 27$ and $N\ge n/4$,
	\begin{equation*}
		\phi^{(j)}_N
		\le K_s (\log n)^{\frac{s}{2s+1}}\Bigl(\frac n4\Bigr)^{-\frac{s}{2s+1}}
		=K_s 4^{\frac{s}{2s+1}}(\log n)^{\frac{s}{2s+1}} n^{-\frac{s}{2s+1}}
		\le 2K_s (\log n)^{\frac{s}{2s+1}} n^{-\frac{s}{2s+1}}=: \delta_n ,
	\end{equation*}
	using $4^{s/(2s+1)}\le 2$. With $\delta_{\mathrm{cross},j} = \delta_{n}^2+2\delta_n \Vert a \Vert_{X,j}$, the choice~\eqref{eq:choiceepsinhomo} satisfies the threshold~\eqref{eq:est-threshold}.
	Lemma~\ref{lem:est-general} yields the claim.
\end{proof}

\subsection{Proof of the excess risk rate} \label{ssec:excess}
\begin{proof}[Proof of Theorem~\ref{thm:mainconstant} and Theorem~\ref{thm:maininhomo}]
	We prove Theorem~\ref{thm:maininhomo} here; Theorem~\ref{thm:mainconstant} follows by the same argument with $\phi_N = K_0 \sqrt{\log N/N}$, $\delta_{n,i}$ and $\delta_{\mathrm{cross},i}$, as in the estimation error, $D = T \Delta$ and the choice
	\begin{equation*}
	\varepsilon_i := \delta_{i} \bigl(\log n\bigr)^{\frac{3}{2}} n^{-\frac{1}{2}}.
	\end{equation*}

	Conditionally on $Y=j$, the decomposition \eqref{eq:masterdecomp} reads, for any $0<\varepsilon_j\le1/16$,
	\begin{equation}\label{eq:cond-decomp}
		\E_{\Samp_n,j} \left[\vert 2\eta(X) -1 \vert \indic \bigl( \widehat{\Phi}(X) \neq \Phi^*(X) \bigr) \right]
		\le 2 \varepsilon_j \,\prob_{b_j}\Bigl(\bigl|\eta(X)-\frac12\bigr|\le\varepsilon_j \Bigr) + \prob_{\Samp_n, b_j}\Bigl(|\widehat{\eta}-\eta|>\varepsilon_j \Bigr) .
	\end{equation}
	By the reduction of Section~\ref{ssec:reduction} and Theorem~\ref{thm:density}, the conditional margin probability obeys the same bound as \eqref{eq:margininhomo}:
	\begin{equation*}
		\prob_{b_j}\Bigl(\bigl|\eta(X)-\frac12\bigr|\le\varepsilon_j\Bigr)\le 1 \wedge \frac{C_{\mathrm{margin}}\,\varepsilon_j}{D} .
	\end{equation*}
	Choose $\varepsilon_j$ as in \eqref{eq:choiceepsinhomo}, that is $\varepsilon_j=\delta_{n,j}(\log n)v_n$.
	By Lemma~\ref{lem-estinhomo}, the estimation term is at most $(9+8C_s)/n$.
	The factor $\delta_{n,j}$ is bounded uniformly over the class: since $\log n\le n$, the middle term in its definition is at most $8K_s^2$.
	Indeed, for $(b_1, b_0)$ in the given set, we have $\Vert b_j \Vert_{\infty} \le R$ and thus
	\begin{equation*}
		\E_j[X_t^2]=\int_0^t\exp\Bigl(2\int_s^t b_j(\tau)\diff\tau\Bigr)\diff s
		\le\int_0^t e^{2R(t-s)}\diff s=\frac{e^{2Rt}-1}{2R}.
	\end{equation*}
	Hence, by \eqref{eq:Xnorm} and $\int_0^T\frac{e^{2Rt}-1}{2R}\diff t =\frac{e^{2RT}-1-2RT}{4R^2}$,
	$\Vert a\Vert_\infty\le\Vert b_1\Vert_\infty+\Vert b_0\Vert_\infty\le2R$ yields
	\begin{equation}\label{eq:aXbound}
		\Vert a\Vert_{X,j}^2
		\;\le\;e^{2RT}-1-2RT \le 2\,(RT)^2\,e^{2RT},
	\end{equation}
	whence
	\begin{equation*}
	\delta_{n,j}\le 48\bigl(2\sqrt2\,K_s+8K_s^2+8\sqrt2\,K_s RT e^{RT}\bigr)=:\bar\delta_{s,R,T}.
	\end{equation*}
	In particular $\varepsilon_j\le\bar\delta_{s,R,T}(\log n)v_n\le 1/16$ for $n\ge N_{s,R,T,K_s}$.

	\begin{itemize}
		\item When $D_0 \lesssim \bigl(\log n \bigr)^{\frac{3s+1}{2s+1}}\; n^{ - \frac{s}{2s+1}} = (\log n)v_n$, we have
		\begin{equation*}
			\frac{C_{\mathrm{margin}} (\rho)\,\varepsilon_j}{D} \le \frac{C_{\mathrm{margin}} (\rho_0)\,\varepsilon_j}{D_0}
		\end{equation*}
		by \eqref{eq:Cmargindef} and we shall use the trivial margin bound $1$:
		\begin{align*}
			\eqref{eq:cond-decomp} \le  2\varepsilon_j + \frac{9+8C_s}{n} \lesssim \bigl(\log n \bigr)^{\frac{3s+1}{2s+1}}n^{ - \frac{s}{2s+1}} = (\log n) v_n,
		\end{align*}
		since $\varepsilon_j \le \bar\delta_{s,R,T}(\log n)v_n$ for $b_1,\ b_0 \in \Sigma(s,R)$.
		Eventually, the constant only depends on $s, R, \rho_0$, $T$, $K_s$ and $C_s$.
		\item When $D \gtrsim \bigl(\log n \bigr)^{\frac{3s+1}{2s+1}}\; n^{ - \frac{s}{2s+1}}$ (with the implied constant chosen so that $D\ge64\,(\varepsilon_0\vee\varepsilon_1)$), the margin bound applies and
		\begin{align*}
			\eqref{eq:cond-decomp} \le \frac{2\,C_{\mathrm{margin}}\,\varepsilon_j^2}{D}+ \frac{9+8C_s}{n}
			\lesssim \frac{\bigl(\log n \bigr)^{2} \, v_n^2}{D},
		\end{align*}
		where we use $\varepsilon_j^2\lesssim(\log n)^{2} v_n^{2}$ and that $n^{-1}\le n^{-2s/(2s+1)}$ is dominated by the proposed rate. The hidden constant in $\lesssim$ depends only on $s$, $R$, $\rho_0$, $T$, $K_s$ and $C_s$, and is uniform over the class $\mathcal{B}_{\mathrm{in}}(\rho_0, D_0) \cap \Sigma(s,R)^2$.

	\end{itemize}
	Averaging over $j\in\{0,1\}$ with weights $\frac12$ gives the theorem.

\end{proof}

\section{Appendix} \label{sec:appendix}

\begin{lem}[Example~\ref{ex:countermargin}] \label{lem:countermargin-reduct}
	In the setting of Example~\ref{ex:countermargin}, the log-likelihood ratio satisfies~\eqref{eq:ellrank2} with $\lambda_+, \lambda_- >0$ and independent standard normal variables $\xi_1$, $\xi_2$.
\end{lem}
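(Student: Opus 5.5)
The plan is to follow the pathwise rewriting of Remark~\ref{rem:itocorrection}, applied piecewise on $[0,\tfrac12)$ and $[\tfrac12,1]$, since $a$ is smooth on each piece but jumps at $t=\tfrac12$. Conditionally on $Y=i$, by \eqref{eq:logratio-margin},
\[
\ell=\int_0^1 a X_t\diff W_t+\tfrac{\sigma_i}{2}\int_0^1 a^2X_t^2\diff t .
\]
On each piece I apply It\^o's formula to $a(t)X_t^2/2$:
\[
\diff\bigl(\tfrac{a}{2}X_t^2\bigr)=\tfrac{a'}{2}X_t^2\diff t+a\bigl(b_iX_t^2\diff t+X_t\diff W_t+\tfrac12\diff t\bigr).
\]
This gives
\[
\int_{I} aX\diff W=\Bigl[\tfrac a2X^2\Bigr]_{I}-\tfrac12\int_I\bigl(a'+2ab_i\bigr)X^2\diff t-\tfrac12\int_I a\diff t .
\]
Adding the Lebesgue term $\tfrac{\sigma_i}{2}\int a^2X^2$, the integrand becomes
\[
a'+a(2b_i-\sigma_ia)=a'+a(b_0+b_1)=\psi .
\]
Here $b_0+b_1=-2$, so $\psi=a'-2a=0$ on both pieces, and the Lebesgue part vanishes identically. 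The constant $-\tfrac12\int_0^1a=-\tfrac D2$ vanishes because $D=0$. Indeed,
\[
\int_0^{1/2}ce^{2t}\diff t=\tfrac c2(e-1)=\int_{1/2}^1ce^{2(t-1/2)}\diff t ,
\]
so the two pieces cancel.

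Next I collect the boundary terms, using $X_0=0$, $a(\tfrac12-)=ce$, $a(\tfrac12)=-c$ and $a(1)=-ce$:
\[
\ell=\tfrac{ce}{2}X_{1/2}^2-\tfrac{-c}{2}X_{1/2}^2+\tfrac{-ce}{2}X_1^2=\tfrac c2\bigl[(1+e)X_{1/2}^2-eX_1^2\bigr].
\]
This is \eqref{eq:ellrank2}. It is a quadratic form $\tfrac c2\,\xi^\top M\xi$ in the centred Gaussian vector $\xi=(X_{1/2},X_1)$, with $M=\mathrm{diag}(1+e,-e)$. Its covariance $\Sigma_i$ is given by \eqref{eq:OUsol}. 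It is positive definite, because $X_1-e^{\int_{1/2}^1 b_i}X_{1/2}$ is independent of $X_{1/2}$ and has positive variance.

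To diagonalize, I write $\xi=\Sigma_i^{1/2}N$ with $N$ standard normal in $\R^2$. Then
\[
\ell=\tfrac c2\,N^\top\Sigma_i^{1/2}M\Sigma_i^{1/2}N .
\]
Rotating by the orthogonal eigenbasis of the symmetric matrix $\Sigma_i^{1/2}M\Sigma_i^{1/2}$ gives $\lambda_+\xi_1^2-\lambda_-\xi_2^2$ with independent standard normal $\xi_1,\xi_2$. By Sylvester's law of inertia, this congruent matrix has the same signature as $M$: one positive and one negative eigenvalue. Hence $\lambda_\pm=\lambda_\pm(i)>0$.

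The only delicate step is justifying It\^o's formula across the jump of $a$. I handle it by working on the two closed pieces separately, where $a$ is $C^1$ up to the endpoint using the left limit, and then adding. The jump contributes the term $\tfrac{a(1/2-)-a(1/2)}{2}X_{1/2}^2$, which the computation above already accounts for. Note also that Lemma~\ref{lem:Afactorization} is not needed; the direct pathwise calculation suffices.
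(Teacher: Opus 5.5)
Your proof is correct and follows essentially the paper's route. You use an It\^o/integration-by-parts identity for $a(t)X_t^2$ in which the Lebesgue term vanishes (because $\psi=a'-2a=0$) and the constant vanishes (because $\int_0^1 a=0$), leaving $\tfrac{c}{2}\bigl[(1+e)X_{1/2}^2-eX_1^2\bigr]$; you then diagonalise $\Sigma^{1/2}M\Sigma^{1/2}$ and use a sign/inertia argument, as the paper does. The only difference is cosmetic: the paper applies It\^o's formula once, with the measure $\diff a=2a\,\diff t-c(1+e)\delta_{1/2}$, to the label-free form $\int_0^1 aX\,\diff X+\int_0^1 aX^2\,\diff t$, whereas you substitute the SDE first and treat the jump as a boundary term between two smooth pieces.
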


\begin{proof}
	From~\eqref{eq:llr} and $b_1^2-b_0^2=(b_1-b_0)(b_1+b_0)=-2a$,
	\begin{equation*}
		Z = \ell(X; b_1, b_0)=\int_0^1 a(t) X_t\,\diff X_t+\int_0^1 a(t)X_t^2\,\diff t.
	\end{equation*}
	Since $a$ is of bounded variation and is right-continuous, It\^o's formula applied to $a(t)X_t^2$ with $X_0=0$ gives
	\begin{equation*}
		a(1)X_1^2=\int_0^1 X_t^2\,\diff a(t)+2\int_0^1 a(t)X_t\,\diff X_t+\int_0^1 a(t) \,\diff t ,
	\end{equation*}
	where $\diff a(t)= 2a(t)\,\diff t-c(1+e)\,\delta_{1/2}$ as a measure.
	Solving for the stochastic integral and using $\int_0^1 a =0$, we have
	\begin{equation*}
		\int_0^1 a(t)X_t\,\diff X_t
		=\frac12\Bigl(a(1)X_1^2-2\int_0^1 a(t) X_t^2\,\diff t+c(1+e)X_{1/2}^2\Bigr).
	\end{equation*}
	Inserting this into $Z$ cancels the Lebesgue integral, and $a(1)=-ce$ yields $Z=\frac c2\big[(1+e)X_{1/2}^2-eX_1^2\big]$.
	Conditionally on the class, $z:= (X_{1/2}, X_1)^T$ is a centred Gaussian vector with covariance $\Sigma$, and $Z=z^T M z$ with $M:=\frac{c}{2} \operatorname{diag}(1+e,-e)$.
	Write $z = \Sigma^{1/2} \xi$ with $\xi \sim N(0,I_2)$ and diagonalise the symmetric matrix $\Sigma^{1/2}M\Sigma^{1/2}$.
	The determinant $\det M\det\Sigma=-\tfrac{c^2}{4}e(1+e)\det\Sigma$ is negative.
	This gives one positive and one negative eigenvalue and hence~\eqref{eq:ellrank2}.
\end{proof}

\begin{lem} \label{lem:countermargin}
	Let $\xi_1$, $\xi_2$ be independent standard normal random variables, $\lambda_+$, $\lambda_- >0$ and $Z = \lambda_+ \xi_1^2 - \lambda_- \xi_2^2$.
	Then there exist constants $c > 0$ and $t_0 \in (0,1)$ depending only on $\lambda_+, \lambda_-$, such that
	\begin{equation*}
		\prob \left( \left| Z \right| \le t \right) \ge c \, t \log \left(1/t \right), \qquad \text{for all } 0 < t \le t_0.
	\end{equation*}
\end{lem}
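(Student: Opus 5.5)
The plan is to compute the small-ball probability directly from the joint density of $(\xi_1,\xi_2)$. We reduce to the region where both $|\xi_1|$ and $|\xi_2|$ are at most $1$, so the Gaussian density there is bounded below by $(2\pi)^{-1}e^{-1}$. On that region we show that the Lebesgue measure of $\{|\lambda_+x^2-\lambda_-y^2|\le t\}$ is of order $t\log(1/t)$. The logarithm comes from the hyperbolic geometry of the level sets near the origin: in the coordinates $p=\sqrt{\lambda_+}x-\sqrt{\lambda_-}y$ and $q=\sqrt{\lambda_+}x+\sqrt{\lambda_-}y$ we have $Z=pq$, and the set $\{|pq|\le t\}$ has area of order $t\log(1/t)$ in a unit box.

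First, by symmetry and the change of variables $s=\sqrt{\lambda_+}\,x$, $r=\sqrt{\lambda_-}\,y$ on the quadrant $x,y\ge0$, we write
\begin{equation*}
\prob(|Z|\le t)\ \ge\ \frac{4}{2\pi e\sqrt{\lambda_+\lambda_-}}\ \mathrm{Leb}\bigl\{(s,r)\in[0,\kappa]^2:\ |s^2-r^2|\le t\bigr\},
\end{equation*}
where $\kappa:=\min\{\sqrt{\lambda_+},\sqrt{\lambda_-}\}$ guarantees $|x|,|y|\le1$. Next we set $p=s-r$ and $q=s+r$, with Jacobian $1/2$. The square $[0,\kappa]^2$ contains the region $\{0\le p\le q/2,\ 0\le q\le\kappa\}$, since there $r=(q-p)/2\ge q/4\ge 0$ and $s=(q+p)/2\le 3q/4\le\kappa$. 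The condition becomes $pq\le t$. The area is therefore at least
\begin{equation*}
\frac12\int_0^{\kappa}\min\Bigl\{\frac{q}{2},\frac{t}{q}\Bigr\}\,\diff q\ \ge\ \frac12\int_{\sqrt{2t}}^{\kappa}\frac{t}{q}\,\diff q\ =\ \frac t2\log\frac{\kappa}{\sqrt{2t}}.
\end{equation*}

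Finally, for $t\le t_0$ with $t_0$ small enough, say $t_0\le\min\{\kappa^4/4,\ 1/4\}$ and $t_0<1$, we have $\log(\kappa/\sqrt{2t})\ge\frac14\log(1/t)$. To see this, note that $\kappa^2/(2t)\ge t^{-1/2}\cdot\kappa^2 t^{-1/2}/2\ge t^{-1/2}$ once $t\le\kappa^4/4$. This gives the claim with $c=\bigl(16\pi e\sqrt{\lambda_+\lambda_-}\bigr)^{-1}$, up to the constant bookkeeping.

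The only delicate point is making sure that the chosen region in the $(p,q)$ coordinates really sits inside the box where the Gaussian density is bounded below. It must also still capture the full logarithmic range $q\in[\sqrt{2t},\kappa]$. The triangle $0\le p\le q/2$ handles both requirements, so the rest is routine.
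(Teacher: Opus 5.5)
Your proof is correct, and it takes a genuinely different route from the paper. The paper stays one-dimensional. It writes $\prob(|Z|\le t)=\prob(|X-U|\le s)$ with $X=\xi_1^2$, $U=\lambda_-\xi_2^2/\lambda_+$ and $s=t/\lambda_+$, and bounds both $\chi^2_1$-type densities from below by multiples of $x^{-1/2}$ near $0$. It then shows $\prob(|X-u|\le s)\gtrsim s/\sqrt u$ for $s\le u\le 1$ and integrates against $g(u)\gtrsim u^{-1/2}$, so the logarithm appears as $\int_s^1 s\,u^{-1}\diff u$.

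You never pass to the squares. You bound the planar Gaussian density below by a constant on a box and compute the area of the hyperbolic strip $\{0\le pq\le t\}$ around the diagonal $s=r$ in null coordinates, where the logarithm is $\int_{\sqrt{2t}}^{\kappa}t\,q^{-1}\diff q$. The two computations are the same integral in different coordinates: the paper's factor $u^{-1/2}\cdot u^{-1/2}$ is exactly the pair of Jacobians of squaring that you avoid.

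Your version is more geometric and treats $\lambda_+$ and $\lambda_-$ symmetrically. Its constants depend on $\lambda_\pm$ only polynomially, $c\asymp(\lambda_+\lambda_-)^{-1/2}$ and $t_0\asymp\min\{\lambda_+,\lambda_-\}^2\wedge 1$, whereas the paper's $c$ carries the factor $e^{-\lambda_+/(2\lambda_-)}$ from its lower bound on $g$. The paper's version, on the other hand, fits the convolution-of-$\chi^2$-densities framework used for the upper bound in Theorem~\ref{thm:density}. It also makes explicit that the $t\log(1/t)$ behaviour comes from the two $x^{-1/2}$ singularities meeting at the origin.

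One minor bookkeeping point: carrying your factors through, $\frac{4}{2\pi e}\cdot\frac12\cdot\frac14$, gives $c=(4\pi e\sqrt{\lambda_+\lambda_-})^{-1}$. Your stated $c$ is therefore valid but conservative by a factor $4$. The implicit requirement $\sqrt{2t}\le\kappa$ in the integral step is automatic, since $\log(\kappa/\sqrt{2t})\ge\frac14\log(1/t)>0$ for $t<1$.
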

\begin{proof}
	Set $X = \xi_1^2$, $U=\lambda_- \xi_2^2/ \lambda_+$ and $s=t/ \lambda_+$.
	We have
	\begin{align*}
		\prob(|Z|\le t)= \prob(|X-U| \le s),
	\end{align*}
	with $X$ and $U$ independent.
    The density of $X$ is $f(x)=(2\pi x)^{-1/2}e^{-x/2}$, hence
	\begin{equation*}
		f(x)\ \ge\ c_1\,x^{-1/2}\quad\text{for }0<x\le 2,\qquad c_1:=(2\pi)^{-1/2}e^{-1}.
	\end{equation*}
	The density of $U$ is $g(u)=\frac{\lambda_+}{\lambda_-}f\!\left(\frac{\lambda_+ u}{\lambda_-}\right)$, hence
	\begin{equation*}
		g(u)\ \ge\ c_2\,u^{-1/2}\quad\text{for }0<u\le 1, \qquad c_2:=\left( \frac{\lambda_+}{\lambda_-}\right)^{1/2} (2\pi)^{-1/2} e^{-\lambda_+/(2\lambda_-)}.
	\end{equation*}
	Note that $c_2>0$ depends only on $\lambda_+, \lambda_-$.

	Assume $t\le \lambda_+$, i.e.\ $s\le 1$. For $s\le u\le 1$ we have $[u-s,u+s]\subset[0,2]$, so
	\begin{equation*}
		P(|X-u|\le s)=\int_{u-s}^{u+s}f(x)\,dx
	\ \ge\ c_1\int_{u-s}^{u+s}x^{-1/2}\,dx
	=2c_1\bigl(\sqrt{u+s}-\sqrt{u-s}\bigr).
	\end{equation*}
	Using $\sqrt{u+s}-\sqrt{u-s}=\dfrac{2s}{\sqrt{u+s}+\sqrt{u-s}}$ and $\sqrt{u+s}+\sqrt{u-s}\le 2\sqrt{2u}$, we obtain
	\begin{equation*}
		P(|X-u|\le s)\ \ge\ \frac{\sqrt2\,c_1\,s}{\sqrt u}
	\end{equation*}
	with $s\le u\le 1$.

	By independence,
	\begin{equation*}
		P(|X-U|\le s)=\int_0^\infty P(|X-u|\le s)\,g(u)\,du
	\ \ge\ \int_s^1\frac{\sqrt2\,c_1 s}{\sqrt u}\cdot c_2u^{-1/2}\,du
	=\sqrt2\,c_1c_2\,s\log\frac{1}{s} .
	\end{equation*}
	Since $s=t/\lambda_+$, we have $\log(1/s)=\log(1/t)+\log \lambda_+$.
	Choose $t_0<\min(1,\lambda_+)$ small enough that $\log(1/s) \ge \tfrac12\log(1/t)$ for all $t\le t_0$ (e.g.\ $t_0\le \lambda_+^2$ if $\lambda_+<1$; if $\lambda_+\ge1$ this is automatically satisfied).
	Then
	\begin{equation*}
		P(|Z|\le t)\ \ge\ \frac{\sqrt2\,c_1c_2}{2\lambda_+}\;t\log\frac{1}{t},\qquad 0<t\le t_0.
	\end{equation*}
\end{proof}

\begin{lem} \label{lem:counterrho}
	In the setting of Example~\ref{ex:counterunbounded}, that is
	\begin{equation*}
    L:=\log\frac1\tau\ (\ge784),\qquad c:=\frac{1}{1+L},\qquad
    a(t):=\frac{c}{t\vee\tau},\qquad
    b_1=\frac{a-2}{2},\quad b_0=-\frac{a+2}{2}.
    \end{equation*}
	For every $0 < \tau \le e^{-784}$ such that $\varepsilon_{\tau} :=7/ 4\sqrt L \le 1/16$, we have
	\begin{enumerate}[label=(\roman*)]
		\item Assumption~\ref{ass:drifts} holds in regime $(+)$ with $\psi = a' - 2a \le -3c$ a.e., $c_{\psi} = 3c$,
		$C_\psi=c\tau^{-2}+2c\tau^{-1}$, $c_\omega\ge e^{-1/2}$,
      	$C_\omega\le e^{3/2}$, and $D=\int_0^1a=1$;
		in particular, $\frac{\tau^{-2}+2\tau^{-1}}{3} \le \rho\le e^8\tau^{-2}$.
\item Conditionally on $Y=0$, with $Z:=\ell(X; b_1, b_0)$ and $\bar Z:=2Z+D$ (see Lemma~\ref{lem:shift}),
      \begin{equation*}
          \E_0[\bar Z] \in \big[1-\tfrac1L,\,1\big],\qquad
          \operatorname{Var_0}(\bar Z)\le\frac{14}{L}.
      \end{equation*}
\item $\prob_{b_0}\bigl(|Z|\le7/\sqrt L\bigr)\ge3/4$, and
      consequently~\eqref{eq:rho-margin} holds.
	\end{enumerate}
\end{lem}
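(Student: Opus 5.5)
The proof has three parts: (i) explicit bookkeeping for the assumption quantities, (ii) a moment computation for $\bar Z$ under $Y=0$, and (iii) Chebyshev's inequality.

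\emph{Step (i).} First, $D=\int_0^1 a=c\tau\cdot\tau^{-1}+c\log(1/\tau)=c(1+L)=1$. Also $b_1+b_0=-2$, so $\psi=a'-2a$.
\begin{itemize}
\item On $[0,\tau)$ we have $a'=0$ and $a=c/\tau$, hence $\psi=-2c/\tau$.
\item On $(\tau,1]$ we have $\psi=-c/t^2-2c/t$, whose absolute value ranges over $[3c,\;c\tau^{-2}+2c\tau^{-1}]$.
\end{itemize}
This gives $c_\psi=3c$ and $C_\psi=c\tau^{-2}+2c\tau^{-1}$. The function $a$ is Lipschitz with constant $c/\tau^2$.

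For $\omega_i$ we use $\int_0^t b_i=\pm\tfrac12\int_0^t a-t$ with $0\le\int_0^t a\le1$. So $\int_0^t b_i\in[-3/2,\,1/2]$, which yields $c_\omega\ge e^{-1/2}$ and $C_\omega\le e^{3/2}$. Since $1\le C_\omega/c_\omega\le e^2$, we obtain $C_\psi/c_\psi\le\rho\le e^8 C_\psi/c_\psi$. This is the stated sandwich, after bounding $C_\psi/c_\psi=(\tau^{-2}+2\tau^{-1})/3\le\tau^{-2}$ (true for small $\tau$).

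\emph{Step (ii).} We compute moments using the Itô form \eqref{eq:Zito}:
\begin{equation*}
\bar Z=2Z+D=a(1)X_1^2-\int_0^1 \psi_0(t)X_t^2\diff t ,
\end{equation*}
with $\psi_0=a'+2ab_0+a^2=\psi$. By Lemma~\ref{lem:shift}, $\E_0\bar Z=\Tr A=D+\int_0^1 v\,\E_0[X_r^2]\diff r$ with $v=-a^2$. Hence
\begin{equation*}
\E_0\bar Z=1-\int_0^1 a^2\,\E_0[X_r^2]\diff r .
\end{equation*}
Because $b_0\le -1$, we have $\E_0[X_r^2]\le r$. Therefore
\begin{equation*}
\int_0^1 a^2\,\E_0[X_r^2]\diff r\le\int_0^\tau\frac{c^2 r}{\tau^2}\diff r+\int_\tau^1\frac{c^2}{r}\diff r\le c^2(\tfrac12+L)\le c\le \tfrac1L .
\end{equation*}
For the variance, \eqref{eq:Chisquare} gives $\var_0\bar Z=4\var Z=2\Vert\lambda\Vert_2^2$, and $\Vert\lambda\Vert_2^2=\Vert\E D_sD_tZ\Vert_{L^2}^2$. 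We bound this kernel norm through the factorization $A=M_\omega(R+J^*M_{-F'}J)M_\omega$ of Lemma~\ref{lem:Afactorization}, or equivalently by computing $\var(\int aX\diff W-\tfrac12\int a^2X^2)\le 2\var(\int aX\diff W)+2\var(\tfrac12\int a^2X^2)$.
\begin{itemize}
\item By Itô isometry, the first piece equals $2\int_0^1 a^2\,\E_0 X^2\le 2c^2(L+1)\le 2/L$.
\item For the second piece, use $\var(\int hX^2)\le\big(\int|h|\,(\E X^2)\big)^2\cdot 2$ via Lemma~\ref{lem:chaosnorms}(ii) ($\Vert\lambda\Vert_2\le\Vert\lambda\Vert_1$). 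It is $O(c^2)$, which is tiny.
\end{itemize}
Summing gives $\var_0 Z\lesssim 3/L$, so $\var_0\bar Z=4\var_0 Z\le 14/L$ after tracking constants.

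\emph{Step (iii).} On $\{|Z|\le 7/\sqrt L\}^c$ we have $|\bar Z-D|>14/\sqrt L$. Since $|\E_0\bar Z-D|\le 1/L\le 7/\sqrt L$, this event forces $|\bar Z-\E_0\bar Z|>7/\sqrt L$. Chebyshev's inequality bounds its probability by $(14/L)/(49/L)<1/4$.

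Finally, we pass to the margin through the proof of Lemma~\ref{lem:reducemargin}. The event $|\ell|\le 4\varepsilon$ implies $|\eta-\tfrac12|\le\varepsilon$, because $|\tanh(x/2)|/2\le|x|/4$. With $4\varepsilon_\tau=7/\sqrt L$, this gives
\begin{equation*}
\prob_X\bigl(|\eta-\tfrac12|\le\varepsilon_\tau\bigr)\ge\tfrac12\,\prob_{b_0}(\cdot)\ge\tfrac38 .
\end{equation*}

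The main obstacle is the variance constant. The key point is that $\int a^2\E X^2\approx c^2L\approx 1/L$ despite $a$ blowing up near $0$, and this needs $\E_0X_r^2\le r$ for small $r$.
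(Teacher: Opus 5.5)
Your proof is the paper's proof step for step. Part (i) does the same bookkeeping for $\psi$, $\omega_i$, $D$ and $\rho$; you should add that on $[0,\tau)$ the value $|\psi|=2c/\tau$ lies in $[3c,C_\psi]$, which needs only $\tau\le 2/3$. Part (ii) computes the mean the same way via $\E_0[X_r^2]\le r$, and bounds the variance by splitting off $\int aX\,\diff W$ and $\int a^2X^2\,\diff t$. Part (iii) uses Chebyshev together with $|\eta-\tfrac12|\le|\ell|/4$. The one difference is cosmetic. You bound $\operatorname{Var}_0(\int a^2X^2\,\diff t)$ through Lemma~\ref{lem:chaosnorms}(ii), using $\Vert\lambda\Vert_2\le\Vert\lambda\Vert_1$. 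The paper uses Cauchy--Schwarz with $\E[X_t^4]=3(\E[X_t^2])^2$. Both give $O(L^{-2})$.

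There is an arithmetic error in the last step, and it breaks the stated constants. You claim $(14/L)/(49/L)<1/4$, but in fact $(14/L)/(49/L)=2/7>1/4$. Your threshold $7/\sqrt L$ comes from the crude estimate $|\E_0\bar Z-D|\le 1/L\le 7/\sqrt L$. With that threshold and the variance bound $14/L$, Chebyshev only gives $\prob_{b_0}(|Z|\le 7/\sqrt L)\ge 5/7$. That yields a margin lower bound of $5/14<3/8$, so neither (iii) nor \eqref{eq:rho-margin} follows with the claimed constants.

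There are two easy fixes.
\begin{itemize}
\item The paper uses $|\E_0\bar Z-1|\le 1/L\le 1/\sqrt L$. Then $|\bar Z-1|>14/\sqrt L$ forces $|\bar Z-\E_0\bar Z|>13/\sqrt L$, and Chebyshev gives $14/169<1/4$.
\item Keep your threshold $7/\sqrt L$, but use the sharper variance bound your own computation already gives: $\operatorname{Var}_0\bar Z\le 8\Vert a\Vert_{X,0}^2+4\Vert a\Vert_{X,0}^4\le 8/L+4/L^2\le 9/L$, hence $9/49<1/4$. Even your intermediate bound $\operatorname{Var}_0 Z\le 3/L$ would give $12/49<1/4$. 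The loss comes only from rounding up to $14/L$.
\end{itemize}
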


\begin{proof}
\emph{(i)} We have $b_1-b_0=a$ and $b_0+b_1=-2$, so $\psi=a'-2a$.
On $[0,\tau)$, $a\equiv c/\tau$ and $\psi=-2c/\tau\le-3c$ because $\tau\le2/3$.
On $[\tau,1]$, $a=c/t$ and $\psi=-c/t^2-2c/t\le-3c$, with $|\psi|$ maximal at $t=\tau$.
Thus, $a$ is Lipschitz, nonnegative, $\psi\in L^\infty$ and regime $(+)$ holds with the stated $c_\psi,C_\psi$.
Further $D=\int_0^\tau\frac c\tau\,\diff t+\int_\tau^1\frac ct\,\diff t=c(1+L)=1$.

For the weight functions, $\int_0^ta\in[0,1]$ gives $\int_0^tb_1=\tfrac12\int_0^ta-t\in[-1,\tfrac12]$ and $\int_0^tb_0=-\tfrac12\int_0^ta-t\in[-\tfrac32,0]$,
hence $\omega_1\in[e^{-1/2},e]$, $\omega_0\in[1,e^{3/2}]$.
Finally, we have $\rho=\frac{C_\psi}{c_\psi}\bigl(\frac{C_\omega}{c_\omega}\bigr)^4 \le\frac{\tau^{-2}+2\tau^{-1}}{3}\,e^8\le e^8\tau^{-2}$.
Also, we have $\rho \ge \frac{C_\psi}{c_\psi}$ since $\bigl(\frac{C_\omega}{c_\omega}\bigr)^4 \ge 1$.

\emph{(ii)} Under $Y=0$, $\diff X_t=b_0X_t\,\diff t+\diff W_t$ and $b_1^2-b_0^2=(b_1-b_0)(b_1+b_0)=-2a$, so by the log-likelihood ratio in~\eqref{eq:llr}
\begin{equation*}
	\begin{aligned}
	Z & =\int_0^1a(t) X_t\,\diff X_t+\int_0^1a(t)X_t^2\,\diff t
    =\int_0^1a(b_0+1)(t)X_t^2\,\diff t+\int_0^1a(t)X_t\,\diff W_t \\
	& =-\frac12\int_0^1a(t)^2 X_t^2\,\diff t+\int_0^1a(t)X_t\,\diff W_t ,
	\end{aligned}
\end{equation*}
using $b_0+1=-a/2$.
Hence $\bar Z =1-\int_0^1a(t)^2X_t^2\,\diff t+2\int_0^1a(t)X_t\,\diff W_t$ by Lemma~\ref{lem:shift} and $\E[\bar Z_0]=1-\Vert a\Vert_{X,0}^2$.
Since $b_0\le-1$,
$X_t=\int_0^te^{\int_s^tb_0}\diff W_s$ implies $\E_0[X_t^2]\le t$.
Thus,
\begin{equation*}
    \Vert a\Vert_{X,0}^2\le\int_0^1a(t)^2t\,\diff t
    =\int_0^\tau\frac{c^2}{\tau^2}t\,\diff t+\int_\tau^1\frac{c^2}{t}\,\diff t
    =c^2\bigl(\tfrac12+L\bigr)\le c^2(1+L)=c\le\frac1L .
\end{equation*}
For the variance, $\operatorname{Var}_0(\bar Z)\le2\operatorname{Var}_0\bigl(\int a^2X^2\bigr)+8\,\E_0\bigl(\int aX\,\diff W\bigr)^2$.
The second term equals $8\Vert a\Vert_{X,0}^2\le8/L$. For the first, by Cauchy--Schwarz and $\E[X_t^4]=3(\E[X_t^2])^2$,
\begin{equation*}
    \E\Bigl(\int_0^1a^2X_t^2\,\diff t\Bigr)^2
    =\iint a(s)^2a(t)^2\,\E[X_s^2X_t^2]\,\diff s\,\diff t
    \le3\Bigl(\int_0^1a^2\,\E[X_t^2]\,\diff t\Bigr)^2=3\Vert a\Vert_{X,0}^4\le\frac3{L^2},
\end{equation*}
so $\operatorname{Var}_0(\bar Z)\le6/L^2+8/L\le14/L$ for $L\ge1$.

\emph{(iii)} Since $\{|Z|\le7/\sqrt L\} = \{|\bar Z-1|\le14/\sqrt L \}$, and $|1-\E\bar Z|\le1/L\le1/\sqrt L$,
Chebyshev's inequality gives
\begin{equation*}
    \prob_{b_0}\Bigl(|\bar Z-1|>\frac{14}{\sqrt L}\Bigr)
    \le\prob_{b_0}\Bigl(|\bar Z-\E_0\bar Z|>\frac{13}{\sqrt L}\Bigr)
    \le\frac{14/L}{169/L}<\frac14.
\end{equation*}
As $\eta=(1+e^{-Z})^{-1}$ satisfies $|\eta-\tfrac12|\le\tfrac14|Z|$,
$\{|Z|\le7/\sqrt L\}\subset\{|\eta-\tfrac12|\le7/(4\sqrt L)\}$.
With $Y \sim $ Ber(1/2),
\begin{equation*}
	\prob_X(|\eta-\frac12|\le\varepsilon_\tau)\ge\frac12\prob_{b_0}(|Z|\le7/\sqrt L)\ge\frac38.
\end{equation*}
The restriction $\tau\le e^{-784}$ is exactly $\varepsilon_\tau\le1/16$.
\end{proof}

\begin{lem} \label{lem:exp}
	Let $Z$ be a continuous random variable. For any $0 < \varepsilon \le 1/2$,
	\begin{align*}
		\prob_Z \left( \left\vert \exp(Z) -1 \right\vert \le \varepsilon \right) \le \prob_Z \left( \left\vert Z \right\vert \le 2 \varepsilon  \right).
	\end{align*}
	For any $0 < \varepsilon \le 1/4$,
	\begin{align*}
		\prob_Z \left( \left\vert \exp(Z) -1 \right\vert > \varepsilon \right) \le \prob_Z \left( \left\vert Z \right\vert > \frac{\varepsilon}{2}  \right).
	\end{align*}
\end{lem}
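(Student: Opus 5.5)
The plan is an elementary comparison of the events $\{|e^Z-1|\le\varepsilon\}$ and $\{|e^Z-1|>\varepsilon\}$ with events for $Z$ itself. Continuity of $Z$ plays no real role; it only ensures that boundary cases carry no mass. Both claims follow from deterministic set inclusions, after which monotonicity of probability finishes the argument.

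For the first inequality, fix $0<\varepsilon\le 1/2$. On the event $|e^Z-1|\le\varepsilon$ we have $1-\varepsilon\le e^Z\le 1+\varepsilon$, so $\log(1-\varepsilon)\le Z\le\log(1+\varepsilon)$. The upper side gives $Z\le\log(1+\varepsilon)\le\varepsilon\le 2\varepsilon$. For the lower side we need $-\log(1-\varepsilon)\le 2\varepsilon$ on $(0,1/2]$. The function $\phi(\varepsilon):=2\varepsilon+\log(1-\varepsilon)$ satisfies $\phi(0)=0$ and $\phi'(\varepsilon)=2-1/(1-\varepsilon)\ge 0$ for $\varepsilon\le 1/2$, so $\phi\ge0$ there. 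Hence $|Z|\le 2\varepsilon$, and the inclusion $\{|e^Z-1|\le\varepsilon\}\subseteq\{|Z|\le2\varepsilon\}$ gives the claim.

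For the second inequality, I would prove the contrapositive inclusion $\{|Z|\le\varepsilon/2\}\subseteq\{|e^Z-1|\le\varepsilon\}$ for $0<\varepsilon\le 1/4$. If $|Z|\le x:=\varepsilon/2\le 1/8$, then $|e^Z-1|\le e^{|Z|}-1\le e^{x}-1$. On $x\in[0,1]$ one has $e^x-1\le x e^x\le e^{1/8}x\le 2x=\varepsilon$, using the mean value theorem and $e^{1/8}<2$. Taking complements gives $\{|e^Z-1|>\varepsilon\}\subseteq\{|Z|>\varepsilon/2\}$, which yields the bound.

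There is no genuine obstacle here. The only step needing a little care is the lower logarithmic bound $-\log(1-\varepsilon)\le2\varepsilon$. This is exactly where the restriction $\varepsilon\le1/2$ enters, since $\phi'$ changes sign at $\varepsilon=1/2$. The constant $2$ in the second statement is generous, and the restriction $\varepsilon\le 1/4$ is not sharp.
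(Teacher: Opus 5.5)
Your proof is correct and follows the paper's argument: both parts are deterministic set inclusions, and the first part uses exactly the paper's bounds $\ln(1+\varepsilon)\le\varepsilon$ and $\ln(1-\varepsilon)\ge-2\varepsilon$ on $(0,1/2]$. For the second part the paper argues directly with $\ln(1+\varepsilon)\ge\varepsilon/2$ and $\ln(1-\varepsilon)\le-\varepsilon$, whereas you prove the equivalent contrapositive inclusion via $e^{\varepsilon/2}-1\le\varepsilon$, which is only a cosmetic difference.
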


\begin{proof}
	Using the inequalities $\ln (1+\varepsilon) \le \varepsilon$ and $\ln (1- \varepsilon) \ge -2 \varepsilon$ when $\varepsilon \le 1/2$, the first inequality holds.
	The second inequality holds, since $\ln (1+\varepsilon) \ge \varepsilon/2$ and $\ln (1- \varepsilon) \le - \varepsilon$ for all $0 < \varepsilon \le 1/4$.
\end{proof}

\begin{prop}[Chaos expansion \cite{nourdin2012}] \label{prop-chaos}
	Every $F \in L^2(\Omega)$ can be expanded as
	\begin{align*}
		F = \E[F] + \sum_{p=1}^{\infty} I_p(g_p)
	\end{align*}
	for some unique collection of symmetric kernels $g_p \in L^2([0,T]^p)$, $p \ge 1$, where
	\begin{align*}
		I_p (g_p) =
		p! \int_0^T \int_0^{t_1} \cdots \int_0^{t_{p-1}} g_p(t_1, \cdots, t_p) \diff W_{t_p} \cdots \diff W_{t_1}.
	\end{align*}
	Moreover, if $F \in \text{Dom}(D^n)$ for some $n \ge 1$, then $g_p = \frac{1}{p!} \E [ D^p F ]$ for all $p \le n$.
\end{prop}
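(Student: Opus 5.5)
The plan is to follow the standard Wiener chaos construction from Nualart (2006) and Nourdin and Peccati (2012), in three steps: prove the expansion, prove uniqueness, then identify the kernels through the Malliavin derivative.

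\emph{Existence.} Let $\mathcal H_p$ be the closed span in $L^2(\Omega)$ of $\{H_p(W(h)) : \Vert h\Vert_{L^2}=1\}$, where $H_p$ is the $p$-th Hermite polynomial. Hermite orthogonality, $\E[H_p(W(h))H_q(W(g))]=\delta_{pq}\,p!\,\langle h,g\rangle^p$, shows that the spaces $\mathcal H_p$ are mutually orthogonal. Density of $\bigoplus_p\mathcal H_p$ in $L^2(\Omega,\sigma(W),\prob)$ follows from density of the exponentials $\exp(W(h))$: if $F$ is orthogonal to every exponential, then $F\cdot\prob$ vanishes on $\sigma(W)$ by analyticity and Laplace-transform uniqueness, so $F=0$. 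The generating function $\exp(W(h)-\frac12\Vert h\Vert^2)=\sum_p H_p(W(h))/p!$ then places these exponentials in the closed sum. Next, identify $\mathcal H_p$ with the image of $I_p$. Iterated It\^o's formula gives $p!\,H_p(W(h))=I_p(h^{\otimes p})$, and the isometry $\E[I_p(f)I_q(g)]=\delta_{pq}\,p!\,\langle f,g\rangle$ for symmetric kernels (proved by induction on the iterated It\^o integrals) extends $I_p$ to a closed isometric embedding of symmetric $L^2([0,T]^p)$. Together these give $F=\E[F]+\sum_{p\ge1}I_p(g_p)$ with $\sum_p p!\,\Vert g_p\Vert^2<\infty$.

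\emph{Uniqueness.} The decomposition is orthogonal and $I_p$ is injective on symmetric kernels, because $\Vert I_p(g)\Vert^2=p!\,\Vert g\Vert^2$. Hence the kernels $g_p$ are unique.

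\emph{Kernel identification.} First verify the derivative formula $D_t I_p(g)=p\,I_{p-1}(g(\cdot,t))$ on elementary tensors, using $D\,H_p(W(h))=H_{p-1}(W(h))\,h$, and extend by closedness of $D$. For $F\in\mathrm{Dom}(D^n)$, closedness of $D$ and orthogonality allow the derivative to be taken termwise:
\[
D^k_{t_1,\dots,t_k}F=\sum_{p\ge k}\frac{p!}{(p-k)!}\,I_{p-k}\bigl(g_p(\cdot,t_1,\dots,t_k)\bigr).
\]
Taking expectations removes every term with $p>k$, since those chaoses are centred, which leaves $\E[D^kF]=k!\,g_k$ for each $k\le n$.

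The main obstacle is justifying the termwise differentiation for a general $F\in\mathrm{Dom}(D^n)$. To handle it, I would use the characterization $\Vert DF\Vert^2_{L^2(\Omega;H)}=\sum_p p\,p!\,\Vert g_p\Vert^2$, approximate $F$ by its chaos truncations, and pass to the limit using closability of $D$. For the application in this paper, $Z$ lies in a finite sum of chaoses, so this step is immediate there.
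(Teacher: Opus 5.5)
The paper does not prove this proposition; it cites Nourdin and Peccati. Your outline is exactly the standard textbook argument behind that citation, so it is correct: Hermite spans plus density of exponentials, the isometry $\E[I_p(f)I_q(g)]=\delta_{pq}\,p!\,\langle f,g\rangle$, and Stroock's formula obtained from $D_tI_p(g)=p\,I_{p-1}(g(\cdot,t))$, with termwise differentiation justified by the chaos expression for $\Vert DF\Vert^2$ and closability of $D$. The one blemish is cosmetic. Your orthogonality relation and generating function use monic Hermite polynomials (the generating function also needs $\Vert h\Vert=1$), whereas $p!\,H_p(W(h))=I_p(h^{\otimes p})$ and $D H_p(W(h))=H_{p-1}(W(h))\,h$ use Nualart's $1/p!$ normalization; this changes only constants, not the spaces $\mathcal H_p$ or the final identity $g_p=\frac{1}{p!}\E[D^pF]$.
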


\begin{lem}[Spectral comparison] \label{lem:congruence}
Let $H$ be a separable Hilbert space, $K$ a compact self-adjoint operator on $H$ and $S$ a bounded self-adjoint operator such that
\begin{equation*}
	c \Vert x \Vert\le \Vert S x \Vert\le C \Vert x \Vert, \qquad x \in H,
\end{equation*}
for constants $0 < c \le C < \infty$. Set $A := S K S$ and denote by $\lambda^+_1(\cdot) \ge \lambda^+_2(\cdot) \ge \cdots > 0$ the positive eigenvalues and by $\lambda^-_1(\cdot) \ge \lambda^-_2(\cdot) \ge \cdots > 0$ the absolute values of the negative eigenvalues, each counted with multiplicity.
Then
\begin{enumerate}
	\item[\textup{(i)}] $A$ and $K$ have the same (possibly infinite) number of positive eigenvalues and the same number of negative eigenvalues;
	\item[\textup{(ii)}] $c^2 \lambda^{\pm}_k (K) \le \lambda^{\pm}_k(A) \le C^2 \lambda^{\pm}_k(K)$ for every admissible $k$.
	\item[\textup{(iii)}] for the eigenvalues ordered by magnitude, $\vert \lambda_0(\cdot) \vert \ge \vert \lambda_1(\cdot) \vert \ge \cdots$ counted with multiplicity (setting $\lambda_k(\cdot) := 0$ if fewer than $k+1$ nonzero eigenvalues exist),
	\begin{equation*}
		c^2 \vert \lambda_k(K) \vert \le \vert \lambda_k (A) \vert \le C^2 \vert \lambda_k(K)\vert, \qquad k \ge 0.
	\end{equation*}
\end{enumerate}
\end{lem}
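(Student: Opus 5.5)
The plan is to reduce the congruence $A=SKS$ to a comparison of quadratic forms through Sylvester's law of inertia in its min--max form. First note that $S$ is invertible: it is self-adjoint and bounded below by $c>0$, so it is injective with closed range, and its range has trivial orthogonal complement because that complement equals $\ker S^*=\ker S=\{0\}$. Hence $S$ is a bounded bijection with $\Vert S^{-1}\Vert\le 1/c$, and $A=SKS$ is compact and self-adjoint. For the quadratic forms we have the identity
\begin{equation*}
\langle Ax,x\rangle=\langle K Sx,Sx\rangle ,
\end{equation*}
so the map $x\mapsto Sx$ carries subspaces bijectively onto subspaces of the same dimension.

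For (ii) I would use the Courant--Fischer characterisation of the positive eigenvalues of a compact self-adjoint operator,
\begin{equation*}
\lambda_k^+(A)=\sup_{\dim V=k}\ \inf_{x\in V,\,x\neq0}\frac{\langle Ax,x\rangle}{\Vert x\Vert^2},
\end{equation*}
which is valid whenever the supremum is positive; when it is $\le 0$, fewer than $k$ positive eigenvalues exist. Fix a $k$-dimensional subspace $V$ on which $\langle Ky,y\rangle\ge\mu\Vert y\Vert^2$ with $\mu>0$, and set $V'=S^{-1}V$. For $x\in V'$ we get $\langle Ax,x\rangle=\langle KSx,Sx\rangle\ge\mu\Vert Sx\Vert^2\ge\mu c^2\Vert x\Vert^2$. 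Thus $\lambda_k^+(A)\ge c^2\lambda_k^+(K)$. Running the same argument with the roles exchanged, $K=S^{-1}AS^{-1}$ and $\Vert S^{-1}y\Vert\ge C^{-1}\Vert y\Vert$, gives $\lambda_k^+(K)\ge C^{-2}\lambda_k^+(A)$. Applying both bounds to $-K$ and $-A=S(-K)S$ handles the negative eigenvalues.

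For (i), the min--max formula shows that the number of positive eigenvalues is the supremum of the dimensions of subspaces on which the form is positive definite. Because $S$ is a bijection and preserves positivity of the form in the sense above, this supremum is the same for $A$ and for $K$, including the infinite case. The same reasoning applies to the negative eigenvalues. Part (ii) is a quantitative version of the same statement and so yields (i) as well.

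Part (iii) is the step that needs care, since ordering by magnitude interleaves the positive and negative lists. The plan is to describe $|\lambda_k|$ as the $(k+1)$-th largest element of the merged multiset $\{\lambda^+_j\}\cup\{\lambda^-_j\}$. By (ii), every element of the merged multiset for $A$ is bounded above by $C^2$ times its counterpart for $K$, and the pairing preserves the sign and the index within each list. The $(k+1)$-th largest element of a multiset is monotone under termwise domination of a bijective pairing, which follows from a counting argument: at least $k+1$ elements of $A$'s list are $\ge c^2t$ whenever at least $k+1$ of $K$'s are $\ge t$. This gives $|\lambda_k(A)|\le C^2|\lambda_k(K)|$, and the same argument gives the lower bound with $c^2$. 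Padding with zeros covers finitely many eigenvalues.
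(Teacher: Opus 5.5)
Your proposal is correct and follows the paper's proof essentially step by step. You transfer the span of the top $k$ positive eigenvectors of $K$ through $S^{-1}$ to obtain both the existence of $k$ positive eigenvalues of $A$ and $\lambda_k^+(A)\ge c^2\lambda_k^+(K)$. You then exchange roles via $K=S^{-1}AS^{-1}$, pass to $-K$ for the negative part, and prove (iii) by counting how many eigenvalue magnitudes of $A$ are at least $c^2t$ (resp.\ of $K$ at least $C^{-2}t$), exactly as the paper does.

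The differences are cosmetic. You cite the Courant--Fischer max--min principle, whereas the paper re-derives the needed inequality by a direct orthogonality/dimension argument. You also justify the bijectivity of $S$ more carefully than the paper does.
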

\begin{proof}
Note first that $A=SKS$ is compact and self-adjoint, and that $S$ is bijective.
Its inverse is again self-adjoint. Substituting $x=S^{-1}z$ into the bounds for $S$ gives $C^{-1}\Vert z\Vert\le\Vert S^{-1}z\Vert\le c^{-1}\Vert z\Vert$.

Suppose $K$ has at least $k$ positive eigenvalues. Let $V_k$ be the span of orthonormal eigenfunctions of $K$ for $\lambda^+_1(K),\dots,\lambda^+_k(K)$, and set $W:=S^{-1}V_k$, which is a $k$-dimensional subspace.
Every unit function $y\in W$ satisfies, with the unit function $\hat s:=Sy/\Vert Sy\Vert\in V_k$,
\begin{equation*}
	\langle Ay,y\rangle=\langle KSy,\,Sy\rangle=\Vert Sy\Vert^2\,\langle K\hat s,\hat s\rangle
	\ \ge\ c^2\,\lambda^+_k(K)\ >\ 0,
\end{equation*}
since $\langle K\hat s,\hat s\rangle\ge\lambda^+_k(K)$ for unit vectors of $V_k$ and $\Vert Sy\Vert^2\ge c^2$.
Multiplying by $\Vert Sy\Vert^2$ preserves the lower bound because $\lambda^+_k(K)$ is positive.

On the other hand, let $w_1,w_2,\dots$ be orthonormal eigenfunctions of $A$ for its positive eigenvalues $\lambda^+_1(A)\ge\lambda^+_2(A)\ge\cdots$.
Let $n_+$ be the number of positive eigenvalues, and put $q:=(k-1)\wedge n_+$. The projection of $W$ onto $\operatorname{span}(w_1,\dots,w_q)$ has rank at most $q<k=\dim W$. Thus, there is a unit vector $y\in W$ orthogonal to $w_1,\dots,w_q$.
Write $y=\sum_{q<j\le n_+}\langle y,w_j\rangle w_j+y_-$ with $y_-$ orthogonal to every $w_j$. By the spectral theorem the closed span of $\{w_j\}$ reduces $A$, so the cross terms vanish and $\langle Ay_-,y_-\rangle\le0$, whence
\begin{equation*}
	\langle Ay,y\rangle=\sum_{q<j\le n_+}\lambda^+_j(A)\,\vert\langle y,w_j\rangle\vert^2+\langle Ay_-,y_-\rangle
	\ \le\
	\begin{cases}
		0, & n_+ <k \\%
		\lambda^+_k(A), & n_+ \ge k.
	\end{cases}
\end{equation*}
Since $\langle Ay,y\rangle\ge c^2\lambda^+_k(K)>0$, the case $n_+ < k$ is impossible: $A$ has at least $k$ positive eigenvalues, and $\lambda^+_k(A)\ge c^2\lambda^+_k(K)$.

Exchanging the roles of the two operators ($K=S^{-1}AS^{-1}$) with the self-adjoint operator $S^{-1}$ (bounded below by $C^{-1}$), shows in the same way that if $A$ has at least $k$ positive eigenvalues, then so does $K$, with $\lambda^+_k(K)\ge C^{-2}\lambda^+_k(A)$.
Together, $A$ and $K$ have the same number of positive eigenvalues and $c^2\lambda^+_k(K)\le\lambda^+_k(A)\le C^2\lambda^+_k(K)$ for every admissible $k$. Applying this to $-K$ and $-A=S(-K)S$ yields the assertions for the negative eigenvalues. This proves (i) and (ii).

For (iii), let $N_{B}(x) := \#\{ k : \lambda^+_k(B) \ge x\} + \#\{k : \lambda^-_k(B) \ge x\}$ count the eigenvalues of a compact self-adjoint operator $B$ that have magnitude at least $x$, for $x > 0$.
It is finite because the eigenvalues accumulate only at $0$; by the definition of the magnitude ordering, $\vert \lambda_k(B) \vert \ge x$ if and only if $N_B(x) \ge k+1$.
If $\lambda_j^{\pm}(K) \ge x$, then $\lambda_j^{\pm}(A) \ge c^2 x$ by (ii).
Summing the two sign counts gives $N_A(c^2x) \ge N_K(x)$ for all $x>0$.
For $k$ with $\lambda_k(K) \neq 0$, taking $x=\vert\lambda_k(K)\vert$ yields $N_A(c^2\vert\lambda_k(K)\vert)\ge k+1$, hence $\vert\lambda_k(A)\vert\ge c^2\vert\lambda_k(K)\vert$.
For $\lambda_k(K)=0$ this bound is trivial. Exchanging the roles of $A$ and $K$ as above gives $N_K(C^{-2}x)\ge N_A(x)$ for all $x>0$, hence $\vert\lambda_k(A)\vert\le C^2\vert\lambda_k(K)\vert$.
In particular $\lambda_k(K)=0$ forces $\lambda_k(A)=0$. This proves (iii).
\end{proof}

\begin{lem}[Proof of Lemma~\ref{lem:negspec}]
Let $K=R-V$ as in \eqref{eq:Bdef}, with $R=T F(T) \,\varphi\varphi^*$ rank-one positive with $TF(T)>0$, $\varphi:=\mathbf 1/\sqrt T$, $\Vert\varphi\Vert=1$ and $V\succeq0$ compact.
Let $\mu_0\ge\mu_1\ge\cdots\ge0$ be the eigenvalues of $V$, and let $\nu_1(K) \ge \nu_2(K) \ge \cdots$ denote the magnitudes of the negative eigenvalues of $K$, counted with multiplicity (with $\nu_j(K) := 0$ beyond their number).
$\lambda_0(K)\ge\lambda_1(K)\ge \cdots \ge \lambda_{n_+}(K) \ge 0$
\begin{enumerate}
  \item[\textup{(i)}] $\lambda_1(K)\le0$; i.e.\ $K$ has at most one positive
    eigenvalue.
  \item[\textup{(ii)}] The negative eigenvalues of $K$ interlace those of $V$:
    \begin{equation}
      \mu_{k+1}\;\le\; \nu_{k+1}(K)\;\le\;\mu_k,
      \qquad k\ge0,
    \end{equation}
	whether $K$ has one or no positive eigenvalue.
 	In particular $\nu_j (K)\to 0$ and inherits the decay rate of $\mu_j$.
\end{enumerate}
\end{lem}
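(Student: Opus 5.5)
The plan is to use only the Courant--Fischer min--max principle for compact self-adjoint operators, together with the rank-one structure $R=TF(T)\,\varphi\varphi^*$ and $V\succeq0$.

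\emph{Part (i).} I would argue by contradiction. Suppose $K$ had two positive eigenvalues, counted with multiplicity. Their eigenvectors span a two-dimensional subspace $E$ on which $\langle Kg,g\rangle>0$ for every $g\neq0$. Since $\dim E=2$, $E\cap\varphi^\perp$ contains a unit vector $g$. For this $g$ we have $\langle Rg,g\rangle=TF(T)\,|\langle g,\varphi\rangle|^2=0$, and hence $\langle Kg,g\rangle=-\langle Vg,g\rangle\le0$, a contradiction. So $K$ has at most one positive eigenvalue. In the ordering of the statement this is $\lambda_1(K)\le0$.

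\emph{Part (ii).} I would work with $-K=V-R$, whose positive eigenvalues are exactly the $\nu_j(K)$. The max--min characterization reads
\[
\nu_{k+1}(K)=\sup_{\dim U=k+1}\ \inf_{g\in U,\ \|g\|=1}\langle (V-R)g,g\rangle,
\]
and it holds whenever the right side is positive, with the value $0$ (that is, $\nu_{k+1}=0$) otherwise. The positive part $\max(\cdot,0)$ of the sup--inf gives the $(k+1)$-st positive eigenvalue or $0$. For compact self-adjoint operators this is the standard min--max formula applied to the positive part.

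For the upper bound, $R\succeq0$ gives $V-R\preceq V$, so Weyl monotonicity yields $\nu_{k+1}(K)\le\mu_k$ (the $(k+1)$-st eigenvalue of $V$, indexed from $\mu_0$). If the sup--inf is nonpositive, the bound is trivial since $\mu_k\ge0$.

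For the lower bound, let $U$ be the span of the top $k+2$ eigenvectors of $V$, when these exist. If $V$ has fewer nonzero eigenvalues, then $\mu_{k+1}=0$ and there is nothing to prove. The subspace $U':=U\cap\varphi^\perp$ has dimension at least $k+1$. On it, $\langle (V-R)g,g\rangle=\langle Vg,g\rangle\ge\mu_{k+1}\|g\|^2$. Plugging $U'$ into the max--min formula gives $\nu_{k+1}(K)\ge\mu_{k+1}$ whenever $\mu_{k+1}>0$; the case $\mu_{k+1}=0$ is trivial.

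This argument never uses the sign of the possible positive eigenvalue, so it holds whether $K$ has one or no positive eigenvalue. Finally, $\mu_j\to0$ and the sandwich $\mu_{j}\le\nu_j(K)\le\mu_{j-1}$ give both the convergence $\nu_j(K)\to0$ and the inherited decay rate.

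\textbf{Main obstacle.} The delicate step is the correct form of the min--max principle for the \emph{positive} part of the spectrum of an indefinite compact operator. One must justify that the sup--inf over $(k+1)$-dimensional subspaces equals $\nu_{k+1}$ when positive, and that $\nu_{k+1}=0$ when the sup--inf is nonpositive, with $0$ possibly lying in the essential spectrum. I would handle this by restricting to the spectral subspace of positive eigenvalues, or simply by citing \cite{ReedSimon1978}. The rest is routine dimension counting.
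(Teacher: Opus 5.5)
Your proposal is correct and follows the paper's proof: part (i) rests on $\langle Kx,x\rangle=-\langle Vx,x\rangle\le0$ on $\varphi^\perp$, and part (ii) uses $V-R\preceq V$ for the upper bound and the subspace $U\cap\varphi^\perp$, with $U$ spanned by the top $k+2$ eigenvectors of $V$, for the lower bound. There are two minor differences: for (i) the paper applies Courant--Fischer with the codimension-one subspace $\varphi^\perp$ where you intersect a two-dimensional positive eigenspace with $\varphi^\perp$, and for (ii) your explicit positive-part min--max (value $0$ when there are too few positive eigenvalues) makes the identification with the $\nu_j(K)$ direct, whereas the paper invokes (i) for that step.
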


\begin{proof}
Recall from \eqref{eq:Bdef} that $K=R-V$ on $L^2([0,T])$, where $R=TF(T)\,\varphi\varphi^*$ with $\varphi=\mathbf 1/\sqrt T$, $\Vert\varphi\Vert=1$,
$TF(T)>0$, and $V\succeq0$ is compact with eigenvalues $\mu_0\ge\mu_1\ge\cdots\ge0$.
For a compact self-adjoint operator $B$ we write $\lambda^{\downarrow}_0(B)\ge\lambda^{\downarrow}_1(B)\ge\cdots$ for its eigenvalues in decreasing order and use the Courant--Fischer principle in $0$-indexed form,
\begin{equation*}
  \lambda^{\downarrow}_k(B)
  =\min_{\operatorname{codim}S=k}\ \max_{x\in S,\,\Vert x\Vert=1}\langle Bx,x\rangle
  =\max_{\dim S=k+1}\ \min_{x\in S,\,\Vert x\Vert=1}\langle Bx,x\rangle .
\end{equation*}

\emph{(i)} For $x\in\varphi^\perp$ we have $Rx=0$, hence $\langle Kx,x\rangle=-\langle Vx,x\rangle\le0$.
Taking $S=\varphi^\perp$ ($\operatorname{codim}S=1$) in the min--max form with $k=1$,
\begin{equation*}
  \lambda^{\downarrow}_1(K)\le\max_{x\in\varphi^\perp,\,\Vert x\Vert=1}\langle Kx,x\rangle\le0 ,
\end{equation*}
so $K$ has at most one positive eigenvalue.

\emph{(ii)} Put $\widetilde K:=-K=V-R$, a rank-one downward perturbation of $V\succeq0$.
Since $R\succeq0$ we have $\widetilde K\preceq V$, so $\lambda^{\downarrow}_k(\widetilde K)\le\mu_k$ for all $k\ge0$.
For the lower bound let $U$ be the span of eigenvectors of $V$ for $\mu_0,\dots,\mu_{k+1}$, so $\dim U=k+2$ and $W:=U\cap\varphi^\perp$ has $\dim W\ge k+1$.
For $x\in W$ we have $Rx=0$ and $\langle Vx,x\rangle\ge\mu_{k+1}\Vert x\Vert^2$, hence $\langle\widetilde Kx,x\rangle\ge\mu_{k+1}\Vert x\Vert^2$.
The max--min form with $\dim S=k+1$ gives $\lambda^{\downarrow}_k(\widetilde K)\ge\mu_{k+1}$.
Thus
\begin{equation*}
  \mu_{k+1}\le\lambda^{\downarrow}_k(\widetilde K)\le\mu_k,\qquad k\ge0 .
\end{equation*}
By (i), $\widetilde K$ has at most one negative eigenvalue, lying below its nonnegative spectrum.
Hence, whether $K$ has one or no positive eigenvalue, the values $\lambda^{\downarrow}_0(\widetilde K)\ge\lambda^{\downarrow}_1(\widetilde K)\ge\cdots$ list exactly the magnitudes of the negative eigenvalues of $K$ in decreasing order.
That is, $\nu_j(K)=\lambda^{\downarrow}_{j-1}(\widetilde K)$ for $j\ge1$.
Substituting into the display above yields \eqref{eq:interlace}.
\end{proof}

\begin{lem} \label{lem-chidensity}
	Let $N$ be a standard normal variable in $\R$ and let $Z$ be a random variable in $\R_+$ with a density function independent of $N$. Suppose that $\lambda_1 \ge \lambda_2 > 0$. It holds that
	\begin{align*}
		f_{\lambda_2 N^2 + Z} (x) \le \sqrt{\frac{\lambda_1}{\lambda_2}} f_{\lambda_1 N^2 + Z} (x)
	\end{align*}
	for all $x > 0$.
\end{lem}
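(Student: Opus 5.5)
We need to prove: for $N$ standard normal, $Z\ge0$ with density, independent, $\lambda_1\ge\lambda_2>0$, we have $f_{\lambda_2N^2+Z}(x)\le\sqrt{\lambda_1/\lambda_2}\,f_{\lambda_1N^2+Z}(x)$ for all $x>0$.

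The plan is to compare the two $\chi^2_1$ densities pointwise and then integrate against the law of $Z$. The density of $\lambda N^2$ is
\begin{equation*}
g_1(y;\lambda)=\frac{1}{\sqrt{2\pi\lambda y}}\,e^{-y/(2\lambda)},\qquad y>0 .
\end{equation*}
For $\lambda_1\ge\lambda_2>0$ and every $y>0$ we have $e^{-y/(2\lambda_2)}\le e^{-y/(2\lambda_1)}$, since $1/\lambda_2\ge1/\lambda_1$. Hence
\begin{equation*}
g_1(y;\lambda_2)=\sqrt{\frac{\lambda_1}{\lambda_2}}\,\frac{e^{-y/(2\lambda_2)}}{\sqrt{2\pi\lambda_1y}}\le\sqrt{\frac{\lambda_1}{\lambda_2}}\,g_1(y;\lambda_1).
\end{equation*}
This pointwise comparison is the whole content; the square-root factor is exactly the ratio of the normalising constants.

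Next we use independence and the convolution formula. Since $Z\ge0$ has a density $f_Z$, the variable $\lambda N^2+Z$ has density
\begin{equation*}
f_{\lambda N^2+Z}(x)=\int_0^x g_1(x-z;\lambda)\,f_Z(z)\,\diff z,\qquad x>0 .
\end{equation*}
Applying the pointwise bound with $y=x-z>0$ inside the integral gives the claim; the endpoint $z=x$ is a null set. Because $Z\ge0$, only $z\in[0,x]$ contributes, so no mass is lost in this restriction.

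The main (minor) obstacle is to make sure the densities are taken in the right version. Both convolution integrals define genuine densities, since the integrand is nonnegative and integrable by Tonelli. The inequality is therefore valid for these canonical versions at every $x>0$, which is how the lemma is used in Lemma~\ref{lem:blockulyanov}, where it is applied repeatedly. The same argument also works if $Z$ has no density: one integrates against the law $\prob_Z$ instead of $f_Z(z)\,\diff z$.
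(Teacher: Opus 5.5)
Your proof is correct and is the same as the paper's: the pointwise bound $g_1(y;\lambda_2)\le\sqrt{\lambda_1/\lambda_2}\,g_1(y;\lambda_1)$, where the square-root factor comes from the normalising constants and the exponential ratio is at most one, followed by integration against $f_Z$ in the convolution formula. Your remark that the argument also works against a general law $\prob_Z$ of $Z\ge 0$ is a harmless generalisation.
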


\begin{proof}
	Recall that a weighted $\chi^2$ variable of order 1 with weight $\sigma^2$ has the density function
	\begin{align*}
		f_{\sigma^2 \chi^2(1)} (x) = \frac{1}{ \sqrt{2 \pi \sigma^2 x } }  e^{-\frac{x}{2\sigma^2}} \quad \text{for } x > 0.
	\end{align*}
	It is obvious that if $\lambda_1 \ge \lambda_2 > 0$,
	\begin{align*}
		f_{\lambda_2 N^2}(x) = \sqrt{\frac{\lambda_1}{\lambda_2}} e^{-\frac{x}{2} \left(\frac{1}{\lambda_2} - \frac{1}{\lambda_1} \right)} f_{\lambda_1 N^2}(x) \le \sqrt{\frac{\lambda_1}{\lambda_2}} f_{\lambda_1 N^2}(x)
	\end{align*}
	for all $x \in (0, \infty)$. Thus, using the convolution of density functions, we have
	\begin{align*}
		f_{\lambda_2 N^2 + Z} (x) = \int_{-\infty}^{\infty} f_{\lambda_2 N^2} (x-y) f_{Z} (y) \diff y \le \sqrt{\frac{\lambda_1}{\lambda_2}} \int_{-\infty}^{\infty} f_{\lambda_1 N^2} (x-y) f_{Z} (y) \diff y = \sqrt{\frac{\lambda_1}{\lambda_2}} f_{\lambda_1 N^2 + Z}(x).
	\end{align*}
\end{proof}

\begin{lem} \label{lem-chidensityrev}
	Let $N$ be a standard normal variable in $\R$. Suppose that $\lambda_1$, $\lambda_2 > 0$. It holds that
	\begin{align*}
		f_{\lambda_1 N^2} (x) \le \sqrt{\frac{\lambda_2}{\lambda_1}} e^{\frac{x}{2\lambda_2}} f_{\lambda_2 N^2} (x)
	\end{align*}
	for all $x > 0$.
\end{lem}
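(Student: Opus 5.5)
The plan is a direct computation from the explicit density of a one-dimensional weighted $\chi^2$ variable, recalled in the proof of Lemma~\ref{lem-chidensity}:
\begin{equation*}
  f_{\sigma^2 N^2}(x)=\frac{1}{\sqrt{2\pi\sigma^2 x}}\,e^{-x/(2\sigma^2)},\qquad x>0 .
\end{equation*}
Writing this out for $\sigma^2=\lambda_1$ and $\sigma^2=\lambda_2$ and taking the quotient, the factors $(2\pi x)^{-1/2}$ cancel and we obtain the identity
\begin{equation*}
  f_{\lambda_1 N^2}(x)=\sqrt{\frac{\lambda_2}{\lambda_1}}\,
  \exp\Bigl(\frac{x}{2\lambda_2}-\frac{x}{2\lambda_1}\Bigr)\,f_{\lambda_2 N^2}(x),
  \qquad x>0 ,
\end{equation*}
valid with no ordering assumption between $\lambda_1$ and $\lambda_2$.

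The claim then follows from this identity alone. Since $\lambda_1>0$ and $x>0$, we have $e^{-x/(2\lambda_1)}\le 1$. Dropping this factor gives exactly
\begin{equation*}
  f_{\lambda_1 N^2}(x)\le\sqrt{\lambda_2/\lambda_1}\;e^{x/(2\lambda_2)}\,f_{\lambda_2 N^2}(x).
\end{equation*}

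There is no genuine obstacle here. The only point to note is that, unlike Lemma~\ref{lem-chidensity}, we do not assume $\lambda_1\ge\lambda_2$. The price for this generality is the growing factor $e^{x/(2\lambda_2)}$, which absorbs the possibly positive exponent $x\bigl(\frac{1}{2\lambda_2}-\frac{1}{2\lambda_1}\bigr)$. If a convolution version is needed later, it follows, as in Lemma~\ref{lem-chidensity}, by integrating against a nonnegative density. There one must use $e^{(x-y)/(2\lambda_2)}\le e^{x/(2\lambda_2)}$ for $y\ge 0$.
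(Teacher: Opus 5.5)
Your proof is correct and matches the paper's argument: both write the ratio of the two explicit $\chi^2_1$-type densities as $\sqrt{\lambda_2/\lambda_1}\,e^{-\frac{x}{2}(1/\lambda_1-1/\lambda_2)}$ and then drop the factor $e^{-x/(2\lambda_1)}\le 1$. Your closing remark about a convolution version goes beyond what the lemma asks for and is not needed here.
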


\begin{proof}
	If $\lambda_1$, $\lambda_2 > 0$,
	\begin{align*}
		f_{\lambda_1 N^2}(x) = \sqrt{\frac{\lambda_2}{\lambda_1}} e^{-\frac{x}{2} \left(\frac{1}{\lambda_1} - \frac{1}{\lambda_2} \right)} f_{\lambda_2 N^2}(x) \le \sqrt{\frac{\lambda_2}{\lambda_1}} e^{\frac{x}{2\lambda_2} } f_{\lambda_2 N^2}(x).
	\end{align*}
\end{proof}

\begin{lem}[Ulyanov factor]\label{lem:ulyanov}
	Assume $0<c_F\le-F'\le C_F$ and $F(T)\ge 0$, and let $\theta_k=\lambda_k(A)$, $k\ge1$, be the positive eigenvalues of $A$ other than $\lambda_0(A)$ (Corollary~\ref{cor:posregime}). Recall the definition in \eqref{eq:rho}
	\begin{equation*}
		\rho:= \frac{C_\psi}{c_{\psi}} \left( \frac{C_{\omega}}{c_{\omega}}\right)^4=\frac{C_\omega^2\,C_F}{c_\omega^2\,c_F}\;\ge1 .
	\end{equation*}
	Then
	\begin{equation}\label{eq:thetaratios}
		\frac{\theta_1}{\theta_j}\le\rho\,(j+\frac12)^2\quad (j\ge1),
		\qquad
		\frac{\theta_k}{\theta_1}\le\frac{9\rho}{4}\,k^{-2}\quad (k\ge2).
	\end{equation}
	Consequently, for every integer $m\ge2$ with $(m+1)^2>\frac94\rho$,
	\begin{equation}\label{eq:Pimbound}
		\Pi_m\;\le\;\Bigl(\rho^{\,m-1}\prod_{j=2}^{m}(j+\frac12)^2\Bigr)^{1/2}
		\exp\!\left(\frac{9\rho}{8}\cdot\frac{\sum_{k>m}k^{-2}}{1-\frac{9\rho}{4(m+1)^2}}\right)<\infty .
	\end{equation}
	In particular, if $\rho<4$ then $m=2$ is admissible and
	\begin{equation}\label{eq:Pi2bound}
		\Pi=\Pi_2\le\frac{5\sqrt\rho}{2}\,
		\exp\!\left(\frac{9\rho}{8}\cdot\frac{\pi^2/6-5/4}{1-\rho/4}\right) .
	\end{equation}
\end{lem}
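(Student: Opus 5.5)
The plan is to read everything off the two-sided eigenvalue bounds of Corollary~\ref{cor:posregime} and then do elementary bookkeeping for $\Pi_m$. Write $\theta_k=\lambda_k(A)$ for $k\ge1$. Corollary~\ref{cor:posregime} gives
\begin{equation*}
\frac{c_\omega^2 c_F T^2}{((k+\frac12)\pi)^2}\le\theta_k\le\frac{C_\omega^2 C_F T^2}{(k\pi)^2},\qquad k\ge1 .
\end{equation*}

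\emph{First ratio.} For $j\ge1$, divide the upper bound at $k=1$ by the lower bound at $k=j$:
\begin{equation*}
\frac{\theta_1}{\theta_j}\le\frac{C_\omega^2C_F}{c_\omega^2c_F}\,\frac{(j+\frac12)^2\pi^2}{\pi^2}=\rho\,(j+\tfrac12)^2 .
\end{equation*}
Here one uses $c_F=c_\psi/C_\omega^2$ and $C_F=C_\psi/c_\omega^2$ (Remark~\ref{rem:rhobound}). These give $\rho=C_\omega^2C_F/(c_\omega^2c_F)$, and $\rho\ge1$ is immediate.

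\emph{Second ratio.} For $k\ge2$, divide the upper bound at $k$ by the lower bound at index $1$:
\begin{equation*}
\frac{\theta_k}{\theta_1}\le\rho\,\frac{(3/2)^2}{k^2}=\frac{9\rho}{4}k^{-2}.
\end{equation*}

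\emph{Bound on $\Pi_m$.} The product $\prod_{j=2}^m\theta_1/\theta_j$ is at most $\rho^{m-1}\prod_{j=2}^m(j+\frac12)^2$; take the square root. For the tail factor, set $x_k=\theta_k/\theta_1$ for $k>m$. Then $x_k\le q:=\frac{9\rho}{4(m+1)^2}<1$ by the admissibility condition on $m$, and this also shows $\theta_{m+1}<\theta_1$, so $\Pi_m$ is not declared infinite.

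Next use $-\log(1-x)\le x/(1-x)\le x/(1-q)$ for $0\le x\le q$. This gives
\begin{equation*}
\prod_{k>m}(1-x_k)^{-1/2}=\exp\Bigl(-\tfrac12\sum_{k>m}\log(1-x_k)\Bigr)\le\exp\Bigl(\frac{1}{2(1-q)}\cdot\frac{9\rho}{4}\sum_{k>m}k^{-2}\Bigr),
\end{equation*}
which is exactly \eqref{eq:Pimbound}. The sum is finite, so the bound is finite. If $n_+$ is finite, the products simply truncate and the bound still holds, since all omitted factors are $\ge1$.

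\emph{Case $m=2$.} Admissibility reads $9>\frac94\rho$, i.e.\ $\rho<4$. The first factor is $(\rho(5/2)^2)^{1/2}=\frac{5\sqrt\rho}{2}$. Moreover $\sum_{k>2}k^{-2}=\pi^2/6-5/4$ and $q=\rho/4$, which yields \eqref{eq:Pi2bound}.

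The only potentially delicate point is the index convention. The corollary's bounds are stated for $\lambda_j(A)$ with $j\ge1$ in decreasing order, and they must be the same $\theta_k$ used in the definition of $\Pi_m$. This holds because all eigenvalues are positive in regime $(+)$, so the magnitude ordering coincides with the ordering of the positive eigenvalues. The rest is routine.
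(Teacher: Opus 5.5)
Your proposal is correct and follows the paper's proof essentially line by line. Both ratio bounds come from dividing the two-sided eigenvalue bounds of Corollary~\ref{cor:posregime}, and you control the tail factor of $\Pi_m$ via $-\log(1-x)\le x/(1-q)$ with $q=\frac{9\rho}{4(m+1)^2}$, exactly as the paper does, while your explicit checks that $\theta_{m+1}<\theta_1$ (so $\Pi_m$ is not set to $\infty$) and that the index conventions match are small clarifications the paper leaves implicit.
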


\begin{proof}
	By Corollary~\ref{cor:posregime}, for $j\ge1$
	\begin{equation*}
		\frac{c_\omega^2c_FT^2}{((j+\frac12)\pi)^2}\le\theta_j\le\frac{C_\omega^2C_FT^2}{(j\pi)^2} .
	\end{equation*}
	Hence
	\begin{equation*}
	\frac{\theta_1}{\theta_j} \le\frac{C_\omega^2C_F}{\pi^2}\cdot\frac{((j+1/2)\pi)^2}{c_\omega^2c_FT^2}\cdot\frac{T^2}{1}=\rho(j+\frac12)^2
	\end{equation*}
	and
	\begin{equation*}
		\frac{\theta_k }{\theta_1} \le \frac{C_\omega^2C_FT^2/(k\pi)^2}{c_\omega^2c_FT^2/((3/2)\pi)^2}=\frac94\rho k^{-2},
	\end{equation*}
	which is \eqref{eq:thetaratios}.

	For $k>m$ we get $\theta_k/\theta_1\le\frac94\rho(m+1)^{-2} < 1$. Also we have $-\log(1-x)\le x/(1-q)$ for $0\le x\le q < 1$. Letting $q = \frac94\rho(m+1)^{-2}$, we obtain
	\begin{equation*}
		\prod_{k>m}\Bigl(1-\frac{\theta_k}{\theta_1}\Bigr)^{-1/2}
		\le\exp\Bigl(\frac{1}{2(1-q)}\sum_{k>m}\frac{9\rho}{4}k^{-2}\Bigr).
	\end{equation*}
	\eqref{eq:Pimbound} follows from \eqref{eq:Pim} and the first part of \eqref{eq:thetaratios}.
	For $\rho<4$ and $m=2$: $q=\rho/4<1$, $\theta_1/\theta_2\le\frac{25}4\rho$, and $\sum_{k\ge3}k^{-2}=\pi^2/6-1-\frac14=\pi^2/6-\frac54$, giving \eqref{eq:Pi2bound}.
\end{proof}

\begin{lem}[Concentration of generalized $\chi^2$ variables] \label{lem:LM}
Let $(\lambda_j)_{j\ge0}$ be a real sequence with $\Vert\lambda\Vert_2^2:=\sum_j\lambda_j^2<\infty$
and let $(N_j)_{j\ge0}$ be independent standard normal variables.
Then $W:=\sum_j\lambda_j(N_j^2-1)$ converges in $L^2$ and for every $t>0$,
\begin{equation}\label{eq:LM}
	\prob\bigl(W\ge t\bigr)\vee\prob\bigl(W\le -t\bigr)
	\le\exp\!\left(-\frac{t^2}{4\Vert\lambda\Vert_2^2+4\Vert\lambda\Vert_\infty t}\right),
\end{equation}
where $\Vert\lambda\Vert_\infty:=\sup_j|\lambda_j|$.
In particular $\prob(|W|\ge t)\le2\exp\bigl(-t^2/(4\Vert\lambda\Vert_2^2+4\Vert\lambda\Vert_\infty t)\bigr)$.
\end{lem}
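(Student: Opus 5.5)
The plan is to prove \eqref{eq:LM} by the standard Chernoff/Laurent--Massart argument, first for finitely many nonzero $\lambda_j$ and then passing to the limit. $L^2$ convergence of $W$ is immediate: the summands $\lambda_j(N_j^2-1)$ are independent, centred and have variance $2\lambda_j^2$, and $\sum_j\lambda_j^2<\infty$. The lower tail follows from the upper tail applied to $(-\lambda_j)_j$, which has the same $\Vert\cdot\Vert_2$ and $\Vert\cdot\Vert_\infty$, so only $\prob(W\ge t)$ needs treatment. The two-sided statement then follows by a union bound.

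\emph{Step 1: cumulant bound.} For a single term and $|s\lambda|<1/2$, one has $\log\E e^{s\lambda(N^2-1)}=-s\lambda-\tfrac12\log(1-2s\lambda)$. I will use the elementary inequality $-u-\tfrac12\log(1-2u)\le \frac{u^2}{1-2|u|}$ for $|u|<1/2$. This is proved via the series $\sum_{k\ge2}(2u)^k/(2k)\le u^2\sum_{k\ge0}(2|u|)^k$, which handles both signs of $u$. Writing $M:=\Vert\lambda\Vert_\infty$, for $0\le s<1/(2M)$ and finitely many indices this gives
\begin{equation*}
\log\E e^{sW}\le\sum_j\frac{s^2\lambda_j^2}{1-2s|\lambda_j|}\le\frac{s^2\Vert\lambda\Vert_2^2}{1-2sM}.
\end{equation*}

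\emph{Step 2: optimise the Chernoff bound.} By Markov, $\prob(W\ge t)\le\exp\bigl(-st+\frac{s^2\sigma^2}{1-2sM}\bigr)$ with $\sigma^2:=\Vert\lambda\Vert_2^2$. This is a sub-gamma bound with variance factor $2\sigma^2$ and scale $2M$. The standard Bernstein choice is $s=t/(2\sigma^2+2Mt)$, which indeed satisfies $2sM<1$. Then $1-2sM=\sigma^2/(\sigma^2+Mt)$, and the exponent becomes $-\frac{t^2}{2\sigma^2+2Mt}+\frac{t^2}{4(\sigma^2+Mt)}=-\frac{t^2}{4\sigma^2+4Mt}$, which is exactly \eqref{eq:LM}.

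\emph{Step 3: infinite sums.} Let $W_n$ be the truncation to indices $j\le n$. Its norms are at most $\Vert\lambda\Vert_2$ and $\Vert\lambda\Vert_\infty$, so $\prob(W_n\ge t')\le\exp(-t'^2/(4\sigma^2+4Mt'))$ uniformly in $n$. Since $W_n\to W$ in $L^2$ and hence in probability, $\prob(W\ge t)\le\liminf_n\prob(W_n\ge t-\delta)+\prob(|W-W_n|>\delta)$ for any $\delta>0$. Letting $n\to\infty$ and then $\delta\downarrow0$, using continuity of the bound in $t$, gives the claim. Alternatively, one can apply Fatou to $e^{sW_n}$ after extracting an a.s.\ convergent subsequence.

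The only step needing care is the elementary cumulant inequality in Step 1 for negative $\lambda_j$. The series argument handles this, so I expect no real obstacle. The main point is checking that the chosen $s$ lands inside the admissible range $s<1/(2M)$, which holds for every $t>0$.
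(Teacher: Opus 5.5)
Your proof is correct and is the same argument the paper relies on: the paper only cites the Bernstein inequality for sub-gamma variables in \cite{boucheron2013}, Section~2.4, and your Steps~1--2 carry out exactly that argument (cumulant bound with variance factor $2\Vert\lambda\Vert_2^2$ and scale $2\Vert\lambda\Vert_\infty$, then the Chernoff choice $s=t/(2\Vert\lambda\Vert_2^2+2\Vert\lambda\Vert_\infty t)$). Your write-up also makes explicit two points the citation leaves implicit: the cumulant bound holds for weights of either sign, and the tail bound passes from finite truncations to the infinite series.
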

\begin{proof}
	See Bernstein-type inequalities for sub-gamma variables in Section 2.4 in \cite{boucheron2013}.
\end{proof}

\begin{lem}[Tail bound for the sieve MLE in inhomogeneous case] \label{lem:sieveMLE}
Let $b_i\in\Sigma(s,R)$, $d:= \lceil s \rceil$ and let $n$ be the number of paths of class $i$.
Let $\hat b_i$ be the sieve MLE with the degree at most $\lceil s\rceil -1$ on $\lceil (n/\log n )^{1/(2s+1)}\rceil$ partitions.
Set
\begin{equation}\label{eq:Kdef}
  K := \Bigl(\frac{C_\omega}{c_\omega}\Bigr)^{2}
  \left[\,(1+\sqrt3)\, \sqrt{C_{\mathrm{est},i}} \;\frac{R\,T^s}{(\lceil s\rceil-1)!}
  \;+\;4\sqrt{3+6 \lceil s\rceil \log 5}\,\right].
\end{equation}
Let
\begin{equation}\label{eq:Cs}
  C_s := 300\,\bigl(\lceil s\rceil+7\bigr).
\end{equation}
Then for both $j\in\{0,1\}$,
\begin{equation}\label{eq:inhomoresult}
  \prob\left( \bigl\Vert\hat b_i-b_i\bigr\Vert_{X,j}
    \ge K\,\Bigl(\tfrac{\log n}{n}\Bigr)^{\frac{s}{2s+1}}
    \;\Big|\; N_i=n \right)
  \le \frac{C_s}{n}.
\end{equation}
\end{lem}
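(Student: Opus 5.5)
We prove Lemma~\ref{lem:sieveMLE} by the standard bias--variance split for a linear least-squares type estimator, with the pieces handled through the second-chaos tools of Section~\ref{ssec:chaos}.

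\emph{Setup.} Let $V_n$ be the space of piecewise polynomials of degree at most $d-1=\lceil s\rceil-1$ on the $m:=\lceil (n/\log n)^{1/(2s+1)}\rceil$ equal intervals; $\dim V_n = dm$. Since $\ell$ is quadratic in the drift, the sieve MLE is explicit:
\begin{equation*}
\hat b_i=\arg\max_{\beta\in V_n}\Bigl(\langle \beta,\Gamma_n\rangle -\tfrac12\langle \beta,\hat G_n\beta\rangle\Bigr),
\end{equation*}
where $\Gamma_n(t)=\frac1n\sum_k X^{(k)}_t\diff X^{(k)}_t$ (as a functional) and $\hat G_n$ is the multiplication form $\langle f,\hat G_n g\rangle=\frac1n\sum_k\int_0^T fg\,(X^{(k)}_t)^2\diff t$. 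Its population version $G$ has form $\langle f,Gg\rangle=\Vert\cdot\Vert_{X,i}$-inner product. Writing $b_i^\ast$ for the $\Vert\cdot\Vert_{X,i}$-projection of $b_i$ onto $V_n$, we decompose
\begin{equation*}
\hat b_i-b_i=(b_i^\ast-b_i)+\hat G_n^{-1}\bigl(\Gamma_n-\hat G_n b_i^\ast\bigr)|_{V_n},
\end{equation*}
with the second term equal to $\hat G_n^{-1}$ applied to a martingale term $\frac1n\sum_k\int \beta X^{(k)}\diff W^{(k)}$ plus the bias cross term $\frac1n\sum_k\int\beta (b_i-b_i^\ast)(X^{(k)})^2$.

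\emph{Step 1: bias.} Taylor expansion on each interval of length $T/m$ gives $\Vert b_i-\Pi b_i\Vert_\infty\le R(T/m)^s/(d-1)!$, so $\Vert b_i^\ast-b_i\Vert_{X,i}\le\sqrt{C_{\mathrm{est},i}}\,RT^s m^{-s}/(d-1)!\lesssim(\log n/n)^{s/(2s+1)}$. Norm equivalence $\Vert\cdot\Vert_{X,j}\le (C_\omega/c_\omega)^2\Vert\cdot\Vert_{X,i}$ (from $\E_j[X_t^2]=\omega_j^{-2}\int_0^t\omega_j^2$) transfers this to $j$ and explains the prefactor in \eqref{eq:Kdef}.

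\emph{Step 2: design matrix.} Show that with probability $\ge 1-O(d/n)$, $\hat G_n\succeq\frac12 G$ on $V_n$. Because $V_n$ is a direct sum of local spaces on the $m$ intervals, it suffices to control each $d\times d$ block: for $\beta$ in a local unit sphere, $\langle\beta,\hat G_n\beta\rangle-\langle\beta,G\beta\rangle$ is an average of $n$ i.i.d.\ centred second-chaos variables, handled by Lemma~\ref{lem:chaosnorms}(ii) and Lemma~\ref{lem:LM}, plus a $1/4$-net of cardinality $5^{d}$ per block and a union bound over $m\le n$ blocks. This is where $\log 5$ and $\lceil s\rceil$ in $K$ and $C_s$ come from.

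\emph{Step 3: stochastic term.} On that event, $\Vert\hat G_n^{-1}\xi\Vert_{X,i}\le 2\sup_{\beta\in V_n,\Vert\beta\Vert_{X,i}=1}\xi(\beta)$. For fixed $\beta$, $\frac1n\sum_k\int\beta X^{(k)}\diff W^{(k)}$ is a second-chaos variable with $\Vert\lambda\Vert_2^2=2/n$ (Lemma~\ref{lem:chaosnorms}(i)), so Lemma~\ref{lem:LM} with a net over the $dm$-dimensional sphere gives a bound $\sqrt{dm\log(5)\log n/n}\asymp(\log n/n)^{s/(2s+1)}$ with failure probability $\lesssim 1/n$; the bias cross term is bounded by Cauchy--Schwarz plus Step~1 and the same concentration (factor $\sqrt3$ from $\E X^4=3(\E X^2)^2$). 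Collecting failure probabilities yields $C_s/n$.

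The main obstacle is Step~2: obtaining the lower bound on $\hat G_n$ uniformly over the sieve with only $O(1/n)$ failure probability and constants linear in $\lceil s\rceil$; the localisation into independent-dimension blocks is what I would try first to avoid a global net of size $5^{dm}$, which would still be affordable at this dimension but would degrade constants.
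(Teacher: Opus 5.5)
Your proposal is correct and is essentially the paper's proof: a block-diagonal Gram matrix controlled interval by interval through Lemma~\ref{lem:LM}, a net and a union bound, a $5^{\dim S_m}$-net for the supremum of the score, the piecewise Taylor bias, and the $(C_\omega/c_\omega)^2$ norm transfer; the only deviations are cosmetic (you concentrate the score with Lemma~\ref{lem:LM} instead of the continuous-martingale Bernstein bound, and you centre at the $\Vert\cdot\Vert_{X,i}$-projection instead of at the Taylor approximant $\theta^*$). To land exactly on the stated $K$, centre at $\theta^*$, test the normal equations $\langle\hat b-b,\theta\rangle_n=\nu_n(\theta)$ with $\theta=\hat b-\theta^*$, and use $\mathbf 1\in S_m$, so that $\Vert b-\theta^*\Vert_n\le\Vert b-\theta^*\Vert_\infty\bigl(\tfrac32 C_{\mathrm{est},i}\bigr)^{1/2}$ on the two-sided norm-equivalence event (you therefore need the upper Gram bound as well as $\hat G_n\succeq\frac12 G$) — this is where $1+\sqrt3=1+\sqrt2\cdot\sqrt{3/2}$ comes from rather than from a fourth-moment factor — and note that a $1/4$-net of the unit sphere in $\mathbb{R}^d$ needs up to $9^d$ points, not $5^d$.
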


\begin{proof}
	Let $d:=\lceil s\rceil$ and let $S_m\subset L^2([0,T])$ be the space of piecewise polynomials of degree at most $d-1$ on the partition of $[0,T]$ into $J:=\lceil(N/\log N)^{1/(2s+1)}\rceil$ intervals, so $m:=\dim S_m=dJ$.
	Write $\Vert\theta\Vert_N^2:=\frac1N\sum_{k=1}^N\int_0^T\theta(t)^2(X^{(k)}_t)^2\diff t$ with inner product $\langle\cdot,\cdot\rangle_N$,
	$\ell_N(\theta):=\frac1N\sum_k\bigl[\int_0^T\theta X^{(k)}\diff X^{(k)}-\frac12\int_0^T\theta^2(X^{(k)})^2\diff t\bigr]$ for the empirical log-likelihood,
	and $\nu_N(\theta):=\frac1N\sum_k\int_0^T\theta X^{(k)}\diff W^{(k)}$ for the empirical score.
	We have
	\begin{align*}
		\ell_N(\theta)=\langle b,\theta\rangle_N+\nu_N(\theta)-\frac12\Vert\theta\Vert_N^2.
	\end{align*}
	Let $(\varphi_{k,l})_{l\le d}$ be a $\langle\cdot,\cdot\rangle_{X,i}$-orthonormal basis of the polynomials on the $k$-th interval,
	and $B_k:=(\langle\varphi_{k,l},\varphi_{k,l'}\rangle_N)_{l,l'\le d}$ the corresponding block of the Gram matrix of $\langle\cdot,\cdot\rangle_N$, with $\E B_k=I_d$.
	Define
	\begin{align*}
	\Omega_n := \Bigl\{\ \tfrac12\Vert\theta\Vert_{X,i}^2\le\Vert\theta\Vert_n^2
    \le\tfrac32\Vert\theta\Vert_{X,i}^2 \quad\text{for all }\theta\in S_m\ \Bigr\}.
	\end{align*}
	Let $B_k$, $k = 1, \cdots, J$ be the block matrix of size $d$ in the Gram matrix. We have
	\begin{align*}
		\Omega_n = \Bigl\{ \max_{k \le J} \Vert B_k - I_d \Vert_{\mathrm{op}} \le \frac{1}{2} \Bigr\}.
	\end{align*}
	Using the concentration inequality in Lemma~\ref{lem:LM} with $t = 7/32$, we obtain that for a unit $v \in \R^d$
	\begin{align*}
		\prob \left(| v^T (B_k - I_d) v | \ge \frac{7}{32}\right) \le 2 e^{-\frac{n}{102}}.
	\end{align*}
	A net result in $\R^d$ gives that
	\begin{align*}
		\prob (\Omega_n^c) \le 2 J 9^d e^{-\frac{n}{102}} \le \frac{1}{n}
	\end{align*}
	for $n \ge N_s : = 300\,\bigl(\lceil s\rceil+7\bigr) \ge 2400$.
	Let $\nu_n(\theta) := \tfrac{1}{n}\sum_k \int_0^T \theta(t) X_t^{(k)} \diff W_t^{(k)}$ be the empirical score functional.
	For fixed $\theta \in S_m$ with $\Vert\theta \Vert_{X,i}=1$, $n \nu_n(\theta)$ is a continuous martingale with quadratic variation $n \Vert\theta \Vert_n^2$.
	Using the Bernstein-type inequality for continuous martingales, we have
	\begin{align*}
		\prob \left( \nu_n (\theta) \ge x, \; \Vert\theta \Vert_n^2 \le \frac{3}{2} \right) \le e^{-\frac{n x^2}{3}}.
	\end{align*}
	Set $x^2 = 3(m \log 5 + \log n)/n$ and applying a net result of the $\Vert\cdot \Vert_{X,i}$-unit sphere in $S_m$, we obtain
	\begin{align*}
		\prob \left( \Bigl(\Omega_n \cap \left\{\sup_{\theta \in S_m, \Vert\theta \Vert_{X,i} \le 1} \nu_n(\theta) \le 2x \right\}\Bigr)^c\right) \le \frac{1}{n} + 5^m e^{-\frac{n x^2}{3}} \le \frac{2}{n}.
	\end{align*}
	Taylor expansion and the Hölder condition gives $\theta^* \in S_m$ with
	\begin{align*}
		\Vert b-\theta^*\Vert_n^2\le \bigl(R\,h^s/(d-1)!\bigr)^2\Vert\mathbf1\Vert_n^2\le\tfrac{3}{2} C_{\mathrm{est},i} \bigl(R\,h^s/(d-1)!\bigr)^2.
	\end{align*}
	on $\Omega_n$. Also, $\ell_n$ is strictly concave on $S_m$, so $\hat b$ is the unique maximizer and $\langle\hat b-b,\theta\rangle_n=\nu_n(\theta)$ for all $\theta\in S_m$. Using the equivalence between norms and definitions of $m$ and $J$, we have that
	\begin{align*}
		\left\{\Vert\hat{b}_i - b_i \Vert_{X,j} \ge K v_n \right\} \subset \Bigl(\Omega_n \cap \left\{\sup_{\theta \in S_m, \Vert\theta \Vert_{X,i} \le 1} \nu_n(\theta) \le 2x \right\}\Bigr)^c.
	\end{align*}
	The result concludes by taking $C_s = N_s$ for $n \le N_s$.
\end{proof}

\section*{Acknowlegdements}
This research has been partially funded by Deutsche Forschungsgemeinschaft (DFG) -- SFB1294, projection no. 318763901.
AI-based tools were used to assist with checking the proofs and improving the readability of the manuscript.

\bibliographystyle{spr-ims-number}
\bibliography{ref}

\end{document}